\newcommand \vanish[1]{}
\newtheorem{thm}{Theorem}[section]
\newtheorem{lem}[thm]{Lemma}
\newtheorem{prop}[thm]{Proposition}
\newtheorem{cor}[thm]{Corollary}
\theoremstyle{definition}
\newtheorem{defn}[thm]{Definition}
\newtheorem{nota}[thm]{Notation}
\newtheorem{example}[thm]{Example}
\newcommand\ds\displaystyle
\newcommand\ts\textstyle
\renewcommand{\phi}{\varphi}                 
\renewcommand{\epsilon}{\varepsilon}
\newcommand\eset{\varnothing}
\renewcommand\emptyset\eset
\renewcommand\ell{l}
\newcommand\cupdot {\mbox{\hspace{.15em}$\cup$\hspace{-.47em}$\cdot$\hspace{.4em}}}
\newcommand\supp{\operatorname{supp}}
\begin{document}
\title{Classification of Doubly Distributive skew Hyperfields and Stringent hypergroups}

\author{Nathan Bowler \and Ting Su}

\email{Nathan.Bowler@uni-hamburg.de}

\address{Department of Mathematics, Universit\"at Hamburg, Germany}

\email{ting.su@uni-hamburg.de}

\address{Department of Mathematics, Universit\"at Hamburg, Germany}

\keywords{hypergroup, hyperring, hyperfield, double distributivity}

\begin{abstract} A hypergroup is {\em stringent} if $a\boxplus b$ is a singleton whenever $a \neq -b$. A hyperfield is stringent if the underlying additive hypergroup is. Every {\em doubly distributive} skew hyperfield is stringent, but not vice versa. We present a classification of stringent hypergroups, from which a classification of doubly distributive skew hyperfields follows. It follows from our classification that every such hyperfield is a quotient of a skew field.

\end{abstract}

\maketitle

\section{Introduction}
The notion of {\em hyperfield} was first introduced by Krasner in \cite{Kra57, Kra83}. It is an algebraic structure similar to a field except that its addition $\boxplus$ is multivalued. In \cite{Viro10}, Viro provided an excellent introduction to and motivation for hyperfields and introduced several good examples of hyperfields, including the {\em tropical hyperfield} $\mathbb{T}_+$, the {\em tropical real hyperfield} $\mathbb{TR}$ and the {\em ultratriangle hyperfield} $\mathbb{T}\triangle$. Viro has also illustrated the utility of $\mathbb{T}_+$ for the foundations of tropical geometry in several interesting papers (cf. \cite{Viro10, Viro11}).




In \cite{Bak16}, Baker and Bowler presented an algebraic framework which simultaneously generalizes the notion of linear subspaces, matroids, oriented matroids, and valuated matroids, and called the resulting objects {\em matroids over hyperfields}. A matroid over a field $F$ corresponds to a subspace of some $F^n$. A $\mathbb{K}$-matroid is just a matroid. An $\mathbb{S}$-matroid is an oriented matroid. And a $\mathbb{T}\triangle$-matroid is a valuated matroid, as defined in \cite{DW92b}.

Baker and Bowler also provided two natural notions of matroids over a hyperfield $F$, {\em weak $F$-matroids} and {\em strong $F$-matroids}, and showed that the two notions coincide when $F$ has a property called {\em double distributivity}. A hyperfield $F$ is {\bf doubly distributive} if $(a \boxplus b)(c \boxplus d)=ac \boxplus ad \boxplus bc \boxplus bd$ for any $a,b,c,d \in F$. Fields, $\mathbb{K}$,  $\mathbb{S}$ and $\mathbb{T}\triangle$ are all doubly distributive. So too are the other two hyperfields mentioned above, $\mathbb{T}_+$ and $\mathbb{TR}$.

It is these the results in tropical geometry and matroid theory which motivate our interest in doubly distributive hyperfields. More generally, we are also interested in doubly distributive hyperrings, which were also analysed by Baker and Bowler. In fact, rather than just hyperfields, they worked with a more general kind of algebraic object known as tracts (cf. \cite{Bak17}). The other important example of tracts other than hyperfields is given by partial fields, which have also been the subject of much fruitful study. Baker and Bowler defined a special class of tracts called {\em partial hyperfields}, objects based on hyperrings which generalize both hyperfields and partial fields in a natural way. The property of double distributivity also extends to hyperrings and thus to partial hyperfields.

We will classify the doubly distributive skew hyperfields in Section~\ref{sect.class.DDsH}. The classification itself will be described in Section~\ref{sect.class.ssH}, but has the following important consequence:

\begin{defn}
A {\bf valuation} $\nu$ of a skew hyperfield $F$ is a map from $F$ to $G\cup \{-\infty\}$, where $(G, <)$ is a linearly ordered group, satisfying
\begin{enumerate}
\item $\nu(x) = -\infty$ if and only if $x = 0$.
\item $\nu(xy) = \nu(x)\cdot \nu(y)$.
\item $\nu(x) > \nu(y)$ implies $x\boxplus y = \{x\}$.
\end{enumerate}
\end{defn}

\begin{thm}
For every doubly distributive skew hyperfield $F$, there is always a valuation $\nu$ of $F$ such that $\nu^{-1}(1_G)$ is either the Krasner hyperfield, or the sign hyperfield, or a skew field.
\end{thm}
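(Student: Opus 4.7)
The plan is to deduce this theorem as a corollary of the classification of stringent hypergroups promised in Section~\ref{sect.class.ssH}, combined with the fact stated in the abstract that every doubly distributive skew hyperfield is stringent. Accordingly, it suffices to read off a linearly ordered group $G$, a layer map $\nu$, and the structure of the identity layer directly from that classification.

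First I would apply the stringent hypergroup theorem to the additive hypergroup of $F$. One expects the classification to realize this hypergroup as a graded union of ``layers'' indexed by a linearly ordered group $G$, so that $x \boxplus y$ is the singleton coming from the higher-indexed summand whenever the two lie in strictly different layers, while hypersums within a single layer are the only possible source of multivaluedness. Define $\nu \colon F \to G \cup \{-\infty\}$ by sending each nonzero $x$ to the index of its layer and $0$ to $-\infty$. Axiom (1) of a valuation is then tautological, and axiom (3) is precisely the absorption rule of the grading. For axiom (2), $\nu(xy) = \nu(x)\cdot\nu(y)$, double distributivity enters: multiplication by a fixed nonzero element $c$ is an automorphism of the additive hypergroup, and uniqueness of the graded decomposition forces it to permute layers in an order-preserving way; a short distributivity argument then identifies the resulting self-map of $G$ with (left) translation by $\nu(c)$.

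Finally I would identify the fiber $\nu^{-1}(1_G) \cup \{0\}$ as a sub-hyperfield of $F$. Since the elements of this fiber all share the same valuation, the general theory of stringent hypergroups restricts the possible addition on the identity layer; and since this fiber is closed under multiplication and inherits double distributivity, only three configurations should survive. If the layer has a single nonzero element we obtain $\mathbb{K}$; if it has exactly two elements $\{1,-1\}$ with $1 \boxplus (-1)$ spanning the whole layer we obtain $\mathbb{S}$; otherwise addition on the layer must be single-valued, and the layer together with $0$ is then a skew field. The main obstacle will be this last step: verifying that the stringent classification leaves no other viable configuration at the identity layer once double distributivity is imposed, and in particular that these three alternatives are genuinely exhaustive.
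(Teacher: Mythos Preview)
Your approach is essentially the paper's: pass from double distributivity to stringency, then read the valuation and the identity layer off the structural classification. Two small corrections are worth making. First, the hypergroup classification (Theorem~\ref{classgroups}) only yields a totally ordered \emph{set} $G$; the group law on $G$ and the multiplicativity $\nu(xy)=\nu(x)\nu(y)$ come afterwards, and for this ordinary distributivity suffices (if $x<_F y$ then $cx\boxplus cy=c(x\boxplus y)=\{cy\}$, so $cx<_F cy$), not double distributivity. Second, the trichotomy at the identity layer also needs no double distributivity: Lemma~\ref{Fg} already shows each layer of a stringent hypergroup is $\mathbb K$, $\mathbb S$, or a group, and the skew hyperfield axioms alone upgrade the identity layer to $\mathbb K$, $\mathbb S$, or a skew field (Lemma~\ref{R1}). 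In short, double distributivity is invoked exactly once---in Proposition~\ref{ddsingleton} to obtain stringency---and everything thereafter is the stringent classification of Theorem~\ref{class.ssH}; your sketch is correct once you stop appealing to double distributivity where plain distributivity and stringency already do the work.
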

This compact description is from the paper \cite{BP19}.

In particular, since any nontrivial ordered group is infinite, it follows from our results that the only finite doubly distributive hyperfields are the Krasner hyperfield, the sign hyperfield and the finite fields.

This classification has a number of applications. For example, we use it in Section~\ref{sect.quotient} to show that any doubly distributive skew hyperfield is a quotient of a skew field. Bowler and Pendavingh used it in \cite{BP19} to show that any doubly distributive skew hyperfield is {\em perfect} and to provide vector axioms for matroids over such skew hyperfields.

Our classification uses a property of the underlying hypergroup which we call {\em stringency}. A hyperfield $F$ is {\bf stringent}, if $a \boxplus b$ is a singleton whenever $a \neq -b$.

\begin{prop}\label{ddsingleton} Every doubly distributive skew hyperfield is stringent.
\end{prop}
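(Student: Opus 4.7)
First I would dispose of the trivial cases: if $a = 0$ then $a \boxplus b = \{b\}$, and similarly if $b = 0$, so both are singletons. Assume therefore that $a, b \neq 0$. By left distributivity of multiplication over $\boxplus$ in a skew hyperring, left-multiplication by $a^{-1}$ is a bijection from $a \boxplus b$ onto $1 \boxplus a^{-1}b$, and the hypothesis $a \neq -b$ translates to $a^{-1}b \neq -1$. So the whole proposition reduces to the following normalized claim: if $b \neq -1$ and $b \neq 0$, then $1 \boxplus b$ is a singleton.

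To prove the normalized claim I would argue by contradiction, supposing that $c, d \in 1 \boxplus b$ are two distinct elements. Since $0 \in 1 \boxplus b$ would force $b = -1$, both $c$ and $d$ are nonzero. Reversibility of the underlying additive hypergroup then yields the equivalent statements $1 \in c \boxplus (-b)$, $\, 1 \in d \boxplus (-b)$, $\, b \in c \boxplus (-1)$, $\, b \in d \boxplus (-1)$, which will serve as inputs for double distributivity.

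The main step is to feed these containments into double distributivity applied to suitably chosen products. For instance, from $1 \in c \boxplus (-b)$ and $1 \in d \boxplus (-b)$ one obtains, using DDR,
\[
1 = 1 \cdot 1 \in (c \boxplus (-b))(d \boxplus (-b)) = cd \boxplus (-bc) \boxplus (-bd) \boxplus b^2,
\]
and similarly $b^2 \in (c \boxplus (-1))(d \boxplus (-1)) = cd \boxplus (-c) \boxplus (-d) \boxplus 1$. The plan is to manipulate such identities, together with the companion product $(1 \boxplus b)(c^{-1} \boxplus (-d^{-1}))$ (which by DDR contains the elements $1, -1, d c^{-1}, -c d^{-1}$ as a set product), to extract a contradiction with the hyperfield identity $0 \in x \boxplus (-y) \Leftrightarrow x = y$.

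The main obstacle will be this last extraction: combining these hyperadditive identities so that they collapse to either $0 \in c \boxplus (-d)$ (forcing $c = d$) or $0 \in 1 \boxplus b$ (forcing $b = -1$), each of which contradicts the setup. The skew-multiplicative setting requires some care regarding the order of products, though this is mitigated by the fact that $-1$ is central in any skew hyperring, so signs can be moved freely across products. I expect the argument to be short but to hinge on an inspired choice of which DDR identities to play against one another.
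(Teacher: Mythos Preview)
Your plan already contains the decisive product, namely $(1\boxplus b)(c^{-1}\boxplus(-d^{-1}))$ --- this is precisely what the paper uses (with $a=1$, $x=c$, $y=d$) --- but you have not spotted why it finishes the argument in one stroke, and the other identities you propose are detours.

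Here is the missing step. By double distributivity and then ordinary distributivity,
\[
(1\boxplus b)(c^{-1}\boxplus(-d^{-1}))
= (1\boxplus b)\,c^{-1}\;\boxplus\;(1\boxplus b)\,(-d^{-1}).
\]
Since $c,d\in 1\boxplus b$, the first summand contains $c\cdot c^{-1}=1$ and the second contains $d\cdot(-d^{-1})=-1$; hence the hypersum contains $1\boxplus(-1)\ni 0$. But the left-hand side is a \emph{set product}: it consists of elements $s\cdot t$ with $s\in 1\boxplus b$ and $t\in c^{-1}\boxplus(-d^{-1})$. In a skew hyperfield $s\cdot t=0$ forces $s=0$ or $t=0$. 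As $b\neq -1$ we have $0\notin 1\boxplus b$, so $0\in c^{-1}\boxplus(-d^{-1})$, i.e.\ $c^{-1}=d^{-1}$ and $c=d$. That is the whole proof.

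Two remarks on your write-up. First, your parenthetical ``contains the elements $1,-1,dc^{-1},-cd^{-1}$ as a set product'' is not accurate as stated: those are not automatically elements of the set product (you do not know, e.g., that $c^{-1}\in c^{-1}\boxplus(-d^{-1})$). What is true --- and what is needed --- is that after rewriting via double distributivity the resulting \emph{hypersum} contains $1\boxplus(-1)$. Second, the products $(c\boxplus(-b))(d\boxplus(-b))$ and $(c\boxplus(-1))(d\boxplus(-1))$ and the ``main obstacle'' you anticipate are unnecessary: the single product above, together with the absence of zero divisors, already collapses to $c=d$.
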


\begin{proof} Let $F$ be a doubly distributive skew hyperfield. Let $a, b \in F^\times$ be such that $a\neq -b$. Let $x, y \in F^\times$ be such that $x, y\in a\boxplus b$. By double distributivity, we have
$$(a\boxplus b)(x^{-1} \boxplus -y^{-1}) = (a\boxplus b)\cdot x^{-1} \boxplus (a\boxplus b)\cdot (-y^{-1}) \supseteq x \cdot x^{-1} \boxplus  y\cdot (- y^{-1}) = 1\boxplus -1 \ni 0.$$

As $a\neq -b$, then $x^{-1}=y^{-1}$, and so $x = y$. So $a \boxplus b$ is a singleton if $a\neq -b$.
\end{proof}

However, not every stringent skew hyperfield is doubly distributive. The following is a counterexample.

\begin{example}\label{stringentnotdd} Let $F : = \mathbb{Z} \cup \{-\infty\}$ be the stringent hyperfield with multiplication given by $a\odot b = a+b$ and multiplicative identity $0$. Hyperaddition is given by
$$a\boxplus b = \begin{cases}
\{\max(a, b)\} & \text{ if $a \neq b$,} \\
\{c \,|\, c< a \} & \text{ if $a = b$,}
\end{cases}
$$
so that the additive identity is $-\infty$. Here we use the standard total order on $\mathbb{Z}$ and set $-\infty < x$ for all $x\in \mathbb{Z}$.

$F$ is not doubly distributive because
\begin{align*}
(0 \boxplus 0) \odot (0 \boxplus 0) & = \{z \,|\, z < 0\} \odot \{z \,|\, z < 0\} = \{z \,|\, z < -1\}, \\
0 \boxplus 0 \boxplus 0 \boxplus 0 & = \{z \,|\, z < 0\} \boxplus \{z \,|\, z < 0\} = \{z \,|\, z < 0\}. 
\end{align*}
\end{example}

We use our classification of stringent skew hyperfields to derive a classification of stringent skew hyperrings in Section~\ref{sect.class.SSHR}. However, this does not give a classification of doubly distributive skew hyperrings, since not every doubly distributive skew hyperring is stringent (see Example~\ref{coutex.ddhyperringnotstringent}).

In fact, we classify all stringent hypergroups, and our classification of doubly distributive skew hyperfields follows from this.

\begin{defn}\label{hypergroup.constr}
Let $(G, <)$ be a totally ordered set, let $(F_g \,|\, g\in G)$ be a family of hypergroups with a common identity element $0$ in each $F_g$ but otherwise disjoint, and let $\psi$ be the surjective function from $\bigcup_{g\in G} F_g^\times$ to $G$ sending $f$ in $F_g^\times$ to $g$. We denote the hyperaddition of $F_g$ by $\boxplus_g$. For any $g \in G$ we denote by $g \downarrow$ the set of $h \in G$ with $h < g$.

Then the \textbf{wedge sum} $F = \bigvee_{g\in G}{F_g}$ is the hypergroup with ground set $\bigcup_{g\in G} F_g$ and hyperaddition given by
\begin{align*}
x\boxplus 0 & = 0 \boxplus x = \{x\}, \\
x \boxplus y & = \begin{cases}
\{x\} & \text{if $\psi(x) > \psi(y)$,} \\ 
\{y\} & \text{if $\psi(x) < \psi(y)$,} \\ 
x \boxplus_{\psi(x)} y & \text{if $\psi(x) = \psi(y)$ and $0 \not \in x \boxplus_{\psi(x)} y$,}\\ 
(x \boxplus_{\psi(x)} y) \cup \psi^{-1}( \psi(x)\downarrow) & \text{if $ \psi(x) = \psi(y)$ and $0 \in x \boxplus_{\psi(x)} y$. }
\end{cases}
\end{align*}

We can also define $\bigcup_{g\in G} F_g$ up to isomorphism if the $F_g's$ don't have the same identity or aren't otherwise disjoint, by replacing the $F_g's$ with suitably chosen isomorphic copies.
\end{defn}

We will show in Section~\ref{sect.class.sH} that this construction always yields a hypergroup, and we classify the stringent hypergroups as follows:

\begin{thm}\label{classgroups}
Every stringent hypergroup is a wedge sum $\bigvee_{g\in G}{F_g}$ where each $F_g$ is either a copy of the Krasner hypergroup, or a copy of the sign hypergroup, or a group. 
\end{thm}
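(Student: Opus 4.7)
The plan is to recover the wedge sum structure intrinsically from the hyperaddition on $H$. First I will define a preorder on $H^\times$ by declaring $a \preceq b$ iff $a = b$ or $a \boxplus b = \{b\}$. Antisymmetry is a consequence of commutativity: if $a \boxplus b = \{b\}$ and $b \boxplus a = \{a\}$, then $a = b$. Transitivity follows from associativity: if $a \boxplus b = \{b\}$ and $b \boxplus c = \{c\}$, then
\[ a \boxplus c = a \boxplus (b \boxplus c) = (a \boxplus b) \boxplus c = b \boxplus c = \{c\}. \]
Informally, $a \preceq b$ says that $a$ has no larger ``absolute value'' than $b$.

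Next I introduce the level equivalence $a \sim b$ iff $a = b$, or neither $a \preceq b$ nor $b \preceq a$. The crucial step is transitivity. Assume $a, b, c$ are pairwise distinct with $a \sim b$ and $b \sim c$, and aim for a contradiction from $a \prec c$. In the generic case $a \neq -b$ and $b \neq -c$, stringency gives $a \boxplus b = \{d\}$ and $b \boxplus c = \{e\}$, and the definition of $\sim$ forces $d \notin \{a, b\}$ and $e \notin \{b, c\}$. From $c \in a \boxplus c$ and reversibility we deduce $a \in c \boxplus (-c)$, so
\[ b \boxplus a \subseteq b \boxplus (c \boxplus (-c)) = (b \boxplus c) \boxplus (-c) = \{e\} \boxplus (-c) = \{b\}, \]
where the final equality is reversibility applied to $e \in b \boxplus c$. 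This contradicts $b \boxplus a = \{d\}$ with $d \neq b$. The degenerate subcases $a = -b$ or $b = -c$ will be handled by feeding a suitable substitute into the same associativity--reversibility recipe.

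Now set $G := H^\times/{\sim}$, let $\psi : H^\times \to G$ be the quotient map, and let $F_g := \psi^{-1}(g) \cup \{0\}$. The induced relation on $G$ is a total order by the trichotomy ($a \prec b$, $a = b$, $b \prec a$, or $a \sim b$ for all $a,b$). I then endow each $F_g$ with the in-level hyperaddition $\boxplus_g$: for $a, b \in F_g^\times$ with $a \neq -b$, set $a \boxplus_g b := a \boxplus b$ (a singleton in $F_g^\times$, which follows from the argument used for transitivity), and set $a \boxplus_g (-a) := (a \boxplus (-a)) \cap F_g$. To recognise $H$ as $\bigvee_{g \in G} F_g$ the only nontrivial identity is $x \boxplus (-x) = (x \boxplus_g (-x)) \cup \psi^{-1}(g \downarrow)$ for $x \in F_g^\times$. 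The inclusion $\supseteq$ follows because for $y \prec x$ one has $x \in y \boxplus x$, equivalent by reversibility to $y \in x \boxplus (-x)$; conversely a higher-level $y$ would force $x \in y \boxplus x = \{y\}$, which is impossible.

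Finally, I classify each $F_g$ using the structural fact $(x \boxplus (-x)) \cap F_g^\times \subseteq \{x, -x\}$: for any such $y$ reversibility yields $x \in y \boxplus x$, hence $y \preceq x$, and within one level this forces $y \in \{x, -x\}$. If $x \boxplus (-x) = \{0\}$ for every $x \in F_g^\times$ then every hypersum in $F_g$ is a singleton, so $F_g$ is a group. Otherwise some $y \in \{x, -x\}$ lies in $x \boxplus (-x)$; one further reversibility step upgrades this to $x \boxplus (-x) = \{0, x, -x\}$, after which computing $(x \boxplus (-x)) \boxplus b$ two ways via associativity rules out any hypothetical extra element $b \in F_g^\times$. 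This leaves $F_g = \{0, x\}$ with $x = -x$ (the Krasner hypergroup) or $F_g = \{0, x, -x\}$ with $x \neq -x$ (the sign hypergroup). The main obstacle is the transitivity of $\sim$; once the associativity--reversibility trick there is in hand, the remaining steps are routine bookkeeping.
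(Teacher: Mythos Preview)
Your overall strategy---define an order on $H^\times$ from the hyperaddition, pass to the incomparability equivalence, and analyse each class---is exactly the route the paper takes. The genuine gap is that the theorem is stated for arbitrary stringent hypergroups, and the hypergroup axioms here (Definition~\ref{hypergroup}) do \emph{not} include commutativity. You invoke commutativity explicitly for antisymmetry of $\preceq$, and implicitly throughout: for instance, having set $a \boxplus b = \{d\}$ you later write ``this contradicts $b \boxplus a = \{d\}$'', and you freely permute summands inside associativity chains. The paper handles non-commutativity by first proving Lemma~\ref{x+y = y+x} (if $y \in x \boxplus y$ then $y \in y \boxplus x$, a nontrivial fact requiring its own case analysis) and then defining the order two-sidedly: $x <_F y$ only when $x \boxplus y = y \boxplus x = \{y\}$. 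Without an analogue of that lemma, your argument covers only the commutative case.

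A secondary point: the ``degenerate subcases $a = -b$ or $b = -c$'' in the transitivity of $\sim$ are not as routine as you suggest. When $a = -b$ the set $a \boxplus b$ is no longer a singleton, so the cardinality contradiction from $b \boxplus a \subseteq e \boxplus (-c) = \{b\}$ evaporates, and it is not obvious what ``suitable substitute'' to feed in. The paper avoids all such case-splitting by first proving a comparison principle (Lemma~\ref{pm}(2)): if $x <_F y$ then for every $z$ either $x <_F z$ or $z <_F y$. From this, transitivity of $\sim_F$ is immediate, the quotient order is automatically total, and the fact that in-level sums stay in-level (your claim that $a \boxplus b \in F_g^\times$) also drops out. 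You would likely find it cleaner to establish such a lemma first rather than to chase the degenerate cases directly.
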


This classification of hypergroups is used to derive the classification of doubly distributive skew hyperfields discussed above.

\subsection{Structure of the paper} 
After the classification of stringent hypergroups in Section~\ref{sect.class.sH}, we show in Section~\ref{sect.class.ssH} that every stringent skew hyperfield arises from a short exact sequence of groups, where the first group in the sequence is the multiplicative group of either the Krasner hyperfield or the sign hyperfield or a skew field, and the last group in the sequence is a totally ordered group. The underlying additive hypergroup is a wedge sum of isomorphic copies of hypergroups. Then we present the classification of doubly distributive skew hyperfields in Section~\ref{sect.class.DDsH} following from the classification of stringent skew hyperfields. We show the surprising result that every stringent skew hyperring is either a skew ring or a stringent skew hyperfield in Section~\ref{sect.class.SSHR}. We use our classification to show that every stringent skew hyperfield is a quotient of a skew field by some normal subgroup in Section~\ref{sect.quotient}. In Appendix~\ref{appendix} we present a proof that a construction really gives a skew field and in Appendix~\ref{app2} we talk about the semirings associated to doubly distributive hyperfields.

\subsection*{Acknowledgements}
We thank Matthew Baker and Laura Anderson (second author's PhD advisor) for introducing the two authors to each other. We thank Laura Anderson and Tom Zaslavsky, who gave us important comments on early versions of the work. Thanks also to Pascal Gollin for asking whether our classification might hold for all stringent hypergroups.
\section{Background}

\begin{nota} Throughout $G$ and $H$ denotes groups. 

For a hypergroup (or skew hyperring) $S$, $S^\times$ denotes $S-\{0\}$.

For a function $f$ from a hypergroup (or skew hyperring) $A$ to a hypergroup (or skew hyperring) $B$, $\supp(f)$ denotes the set of support of $f$ (the elements of $A$ where the function value is not zero).

\end{nota}

\subsection{Hypergroups, hyperrings and hyperfields}

\begin{defn}\label{hyperoperation} A \textbf{hyperoperation} on a set $S$ is a map $\boxplus$ from $S \times S$ to the collection of non-empty subsets of $S$.

If $A$, $B$ are non-empty subsets of $S$, we define
$$A \boxplus B := \bigcup_{a\in A, b\in B} a\boxplus b $$
and we say that $\boxplus$ is \textbf{associative} if $a\boxplus(b\boxplus c)=(a\boxplus b) \boxplus c$ for all $a, b, c \in S$.
\end{defn}

All hyperoperations in this paper will be associative.

\begin{defn}\label{hypergroup} \cite{Viro10}
A \textbf{hypergroup} is a tuple $(G,\boxplus, 0)$ where $\boxplus$ is an associative hyperoperation on $G$ such that:
\begin{enumerate}[(1)]
\item $0\boxplus x = x \boxplus 0 = \{x\}$ for all $x \in G$. 

\item For every $x \in G$ there is a unique element $x'$ of $G$  such that $0 \in x \boxplus x'$ and there is a unique element $x''$ of $G$  such that $0 \in x'' \boxplus x$. Furthermore, $x'= x''$. This element is denoted by $-x$ and called the {\bf hyperinverse} of $x$.

\item\label{invertibility}(Invertibility of sums) $x \in y \boxplus z$ if and only if $-x \in -z \boxplus -y$.

A hypergroup is said to be \textbf{commutative} if
\item $x \in y \boxplus z$ if and only if $x \in z \boxplus y$.
\end{enumerate}
\end{defn}

\begin{thm}\cite{Viro10} In Definition~\ref{hypergroup}, the axiom (\ref{invertibility}) can be replaced by

(Reversibility property) $x\in y \boxplus z$ implies $y \in x\boxplus -z$ and $z \in -y \boxplus x$.
\end{thm}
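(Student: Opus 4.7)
The plan is to show that, under axioms (1) and (2), the Invertibility axiom and the Reversibility property are interderivable. The easier direction is Reversibility implies Invertibility: given $x \in y \boxplus z$, I would chain three applications of the Reversibility axiom. The first half yields $y \in x \boxplus -z$; applying the second half to this new inclusion gives $-z \in -x \boxplus y$; and a final use of the first half produces $-x \in -z \boxplus -y$, which is one direction of Invertibility. The converse direction is then obtained by substituting $(-x, -z, -y)$ for $(x, y, z)$ in the argument just given and using the identity $-(-w) = w$, itself a consequence of the uniqueness of hyperinverses in axiom (2).

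For the harder direction, Invertibility implies Reversibility, the key idea is to combine associativity with the uniqueness clauses of axiom (2). Given $x \in y \boxplus z$, I would compute
$$(-x \boxplus y) \boxplus z \;=\; -x \boxplus (y \boxplus z) \;\supseteq\; -x \boxplus \{x\} \;\ni\; 0,$$
so $0 \in (-x \boxplus y) \boxplus z$ and hence there exists $u \in -x \boxplus y$ with $0 \in u \boxplus z$. By uniqueness of the left hyperinverse of $z$ in axiom (2), $u = -z$; thus $-z \in -x \boxplus y$, and one application of Invertibility gives $z \in -y \boxplus x$. A symmetric computation,
$$y \boxplus (z \boxplus -x) \;=\; (y \boxplus z) \boxplus -x \;\supseteq\; \{x\} \boxplus -x \;\ni\; 0,$$
produces some $v \in z \boxplus -x$ with $0 \in y \boxplus v$, so $v = -y$ by uniqueness of the right hyperinverse of $y$, and Invertibility then yields $y \in x \boxplus -z$. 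Both halves of Reversibility are thereby established.

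The main obstacle is recognizing the role of uniqueness in this second direction. Naive associativity alone gives only ``for some $s \in y \boxplus z$, we have $z \in -y \boxplus s$''---it does not pin down $s = x$, so it does not by itself yield Reversibility. The trick is to rearrange the associator so that the element to be identified appears in a sum forced to contain $0$; uniqueness of hyperinverses (not merely their existence) then determines that element completely. Once this setup is spotted, the remaining manipulations are routine applications of Invertibility.
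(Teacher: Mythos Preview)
The paper does not prove this theorem; it simply attributes it to Viro and moves on. Your argument is correct and complete: the three-step chain of Reversibility applications cleanly yields Invertibility, and for the converse your use of associativity together with the \emph{uniqueness} clause in axiom~(2) to pin down $u=-z$ and $v=-y$ is exactly the right idea. There is nothing to compare against in the paper itself, but your proof stands on its own.
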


The Reversibility property was introduced by Marshall in \cite{Mar06}.

\begin{defn}
A \textbf{skew hyperring} is a tuple $(R,\odot, \boxplus, 1, 0)$ such that:
\begin{enumerate}[(1)]
\item $(R, \odot, 1)$ is a monoid.

\item $(R, \boxplus, 0)$ is a commutative hypergroup.

\item (Absorption rule) $x\odot 0 = 0\odot x = 0$ for all $x \in R$.

\item (Distributive Law) $a \odot (x \boxplus y )= (a \odot x) \boxplus (a \odot y)$ and $(x \boxplus y ) \odot a = (x \odot a) \boxplus (y \odot a)$ for all $a, x, y \in R$.
\end{enumerate}

A \textbf{hyperring} is a skew hyperring with commutative multiplication.

A skew hyperring $F$ is called a \textbf{skew hyperfield} if $0 \neq 1$ and every non-zero element of $F$ has a multiplicative inverse.

A \textbf{hyperfield} is then a skew hyperfield with commutative multiplication.
\end{defn}

\begin{defn} Let $F$ and $G$ be skew hyperrings. We may define a skew hyperring $F \times G$ with $(x_1, y_1) \boxplus (x_2, y_2)$ defined as $(x_1 \boxplus_F x_2) \times (y_1 \boxplus_G y_2)$ and multiplication defined pointwise. Its additive identity is $(0_F, 0_G)$ and its multiplicative identity is $(1_F, 1_G)$. We call $F \times G$ the \textbf{product} of $F$ and $G$.
\end{defn}

Let $x, y \in F$, we will sometimes write $xy$ instead of $x\odot y$ if there is no risk of confusion.

\begin{example} In \cite{Viro10}, Viro provided a good introduction to hyperfields. Several of the following hyperfields were first introduced there.

\begin{enumerate}[(1)]
\item If $F$ is a field, then $F$ is a hyperfield with $a\odot b = a \cdot b$ and $a\boxplus b = \{a + b\}$, for any $a, b \in F$.

\item The \textbf{Krasner hyperfield} $\mathbb{K}:=\{0, 1\}$ has the usual multiplication rule and hyperaddition is defined by $0\boxplus x = \{x\}$ for $x \in \mathbb{K}$ and $1\boxplus 1 = \{0,1\}$.

\item The \textbf{sign hyperfield} $\mathbb{S} := \{0, 1, -1\}$ has the usual multiplication rule and hyperaddition is defined by $0\boxplus x = \{x\}, x\boxplus x = \{x\}$ for $x \in \mathbb{S}$, and $1\boxplus -1 = \{0,1, -1\}$.

\item The \textbf{triangle hyperfield} $\triangle := \mathbb{R}_{\geq 0}$ has the usual multiplication rule and hyperaddition is defined by $x\boxplus y = \{z \, | \, |x-y|\leq z \leq x+y \}$.

\item The {\bf tropical hyperfield} $\mathbb{T}_+ : = \mathbb{R}\cup \{-\infty\}$ has multiplication defined by $x\odot y = x + y$ (with $-\infty$ as an absorbing element), for $x, y \in \mathbb{T}_+$. Hyperaddition is defined by
$$x \boxplus y =\begin{cases}
\{\max(x, y)\}	&\text{ if $x\neq y$,}\\
\{z \, | \, z \leq x\} &\text{ if $x = y$.}
\end{cases}
$$
Here we use the standard total order on $\mathbb{R}$ and set $-\infty < x$ for all $x \in \mathbb{R}$. The additive identity is $-\infty$ and the multiplicative identity is $0$. 

\item The {\bf tropical phase hyperfield} $\Phi: = S^1\cup \{0\}$ has the usual multiplication rule and hyperaddition is defined by $0\boxplus x = \{x\}$, $x\boxplus -x = S^1\cup \{0\}$ and $x\boxplus y = \{\frac{ax+by}{|ax+by|}\, |\, a,b\in \mathbb{R}_{\geq 0}, a+b \neq 0 \}$ for $x, y \in S^1$ with $y \neq -x$.\footnote{This is called phase hyperfield in Viro's paper, but more recent papers have often worked with the phase hyperfield (\ref{phasehyperfield}) described next. The confusion on this point is exacerbated by the fact that Viro incorrectly claims that his phase hyperfield is the same as the quotient hyperfield of the complex numbers by the positive real numbers, but this construction actually gives the hyperfield  (\ref{phasehyperfield}).}

\item\label{phasehyperfield} The \textbf{phase hyperfield} $\mathbb{P} : = S^1\cup \{0\}$ has the usual multiplication rule and hyperaddition is defined by $0\boxplus x = \{x\}$, $x\boxplus -x = \{x, -x, 0\}$ and $x\boxplus y =\{\frac{ax+by}{|ax+by|}\, |\, a,b\in \mathbb{R}_{> 0}\}$ for $x, y \in S^1$ with $y \neq -x$.

\item The \textbf{tropical real hyperfield} $\mathbb{TR} := \mathbb{R}$ has the usual multiplication rule and hyperaddition is defined by
$$x \boxplus y =\begin{cases}
\{x\}	&\text{ if $ |x| > |y|$,}\\
\{y\}	&\text{ if $ |x| < |y|$,}\\
\{x\}   &\text{ if $x = y$,}\\
\{z \, | \, |z| \leq |x|\} &\text{ if $x = - y$.}
\end{cases}
$$

\item The \textbf{tropical complex hyperfield} $\mathbb{TC} := \mathbb{C}$ has the usual multiplication rule and hyperaddition is defined by
$$x \boxplus y =\begin{cases}
\{x\}	&\text{if $ |x| > |y|$,}\\
\{y\}	&\text{if $ |x| < |y|$,}\\
\{|x|\dfrac{a x + b y}{|a x + b y|} \,|\, a, b \in \mathbb{R}_{\geq 0}, a+b\neq 0\}   &\text{if $|x| = |y|$ and $x\neq -y$,}\\
\{z \,|\, |z| \leq |x|\} &\text{if $x = - y$.}
\end{cases}
$$

\item The \textbf{ultratriangle hyperfield} $\mathbb{T}\triangle := \mathbb{R}_{\geq 0}$ (denoted by $\mathbb{Y}_{\times}$ in \cite{Viro10} and $\mathbb{T}$ in \cite{Bak17}) has the usual multiplication rule and hyperaddition is defined by
$$x \boxplus y =\begin{cases}
\{\max(x, y)\}	&\text{ if $x \neq y$,}\\
\{z \, | \, z \leq x \} &\text{ if $x = y$.}
\end{cases}
$$
\end{enumerate}
\end{example}

\begin{defn}\label{dd} \cite{Viro10, Bak17}
A skew hyperring $R$ is said to be \textbf{doubly distributive} if for any $a$, $b$, $c$ and $d$ in $R$, we have
$(a \boxplus b)(c \boxplus d)=ac \boxplus ad \boxplus bc \boxplus bd.$ 
\end{defn}

\begin{example} Fields, $\mathbb{K}$, $\mathbb{S}$, $\mathbb{T}_{+}$, $\mathbb{TR}$, $\mathbb{T}\triangle$ are all doubly distributive, but $\triangle$, $\mathbb{P}$, $\Phi$ and $\mathbb{TC}$ are not doubly distributive. 
\end{example}

\begin{defn}\label{stringent} A hypergroup $G$ is said to be \textbf{stringent} if for any $a, b \in G$ the set $a\boxplus b$ is a singleton whenever $a\neq -b$.

A skew hyperring is said to be \textbf{stringent} if the underlying hypergroup $F$ is stringent.
\end{defn}

\subsection{Homomorphism}

\begin{defn}\label{hyp.homo} \cite{Bak16, Pen18} 
A \textbf{hypergroup homomorphism} is a map $f : G \rightarrow H$ such that $f(0)= 0$ and $f(x \boxplus y) \subseteq f(x)\boxplus f(y)$
for all $x, y \in G$.

A \textbf{skew hyperring homomorphism} is a map $f : R \rightarrow S$ which is a homomorphism of additive commutative hypergroups as well as a homomorphism of multiplicative monoids (i.e., $f(1) = 1$ and $f(x\odot y) = f(x)\odot f(y)$ for $x, y \in R$).

A \textbf{skew hyperfield homomorphism} is a homomorphism of the underlying skew hyperrings.

A \textbf{hypergroup (resp. skew hyperring, skew hyperfield) isomorphism} is a bijection $f : G \rightarrow H$ which is a hypergroup (resp. skew hyperring, skew hyperfield) homomorphism and whose inverse is also a hypergroup (resp. skew hyperring, skew hyperfield) homomorphism.
\end{defn}

\begin{example} The map $\exp : \mathbb{T}_+ \rightarrow \mathbb{T}\triangle$ is a hyperfield isomorphism.
\end{example}

\section{Classification of stringent hypergroups}\label{sect.class.sH}

Our aim in this section is to prove Theorem \ref{classgroups}, the Classification Theorem for stringent hypergroups. We will work with the definition of wedge sums given as Definition \ref{hypergroup.constr}. First we will show that $F := \bigvee_{g \in G} F_g$ is indeed a hypergroup.

\begin{lem} \label{comm.HG}
$F$ is again a hypergroup. If every hypergroup in $(F_g \,|\, g\in G)$ is stringent, then so is $F$. If every hypergroup in $(F_g \,|\, g\in G)$ is commutative, then so is $F$. 
\end{lem}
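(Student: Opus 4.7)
My plan is to verify each axiom in Definition~\ref{hypergroup} in turn, with associativity as the chief obstacle, and then check that stringency and commutativity transfer. The identity clause is built into the first line of the construction. For hyperinverses, I would claim that the hyperinverse in $F$ of $x \in F_g^\times$ is its hyperinverse $-x$ inside $F_g$: since $0 \in x \boxplus_g (-x)$, the fourth clause gives $x \boxplus (-x) = (x \boxplus_g (-x)) \cup \psi^{-1}(g\downarrow) \ni 0$. Uniqueness is automatic, because when $\psi(x) \neq \psi(y)$ the sum $x \boxplus y$ is a singleton containing a nonzero element, so any $y$ with $0 \in x \boxplus y$ must satisfy $\psi(y) = \psi(x)$, and uniqueness inside $F_{\psi(x)}$ pins $y$ down. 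Reversibility ``$x \in y \boxplus z \Rightarrow y \in x \boxplus (-z)$'' splits on the order of $\psi(y), \psi(z)$: if one strictly dominates, $y \boxplus z$ is a singleton and the implication is immediate; if $\psi(y) = \psi(z) = g$, then either $x \in y \boxplus_g z$ (in which case reversibility inside $F_g$ gives $y \in x \boxplus_g (-z) \subseteq x \boxplus (-z)$), or $\psi(x) < g$, and the fourth clause forces $z = -y$ inside $F_g$, so $y = -z$ and then $y \in \{-z\} = x \boxplus (-z)$ because $\psi(x) < \psi(-z)$.

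Associativity is the heart of the proof and will be handled by case analysis on the order type of $(\psi(x), \psi(y), \psi(z))$, with the convention that $\psi(0)$ sits below everything. When one $\psi$-value strictly dominates, both bracketings collapse to the singleton containing the dominant summand, using that no sum $u \boxplus v$ produces elements of $\psi$-value above $\max(\psi(u), \psi(v))$. The essential case is $\psi(x) = \psi(y) = \psi(z) = g$. Setting $D := \psi^{-1}(g\downarrow)$, one has $u \boxplus v = u \boxplus_g v$ when $0 \notin u \boxplus_g v$ and $u \boxplus v = (u \boxplus_g v) \cup D$ otherwise, and $d \boxplus w = \{w\}$ whenever $\psi(d) < \psi(w)$. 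Expanding $(x \boxplus y) \boxplus z$ with these rules, the $D$-contributions collapse to $\{z\}$, which is already absorbed into $x \boxplus_g y \boxplus_g z$ in the only subcase where it matters (namely $0 \in x \boxplus_g y$, whence $z \in 0 \boxplus_g z \subseteq x \boxplus_g y \boxplus_g z$), yielding
\begin{equation*}
(x \boxplus y) \boxplus z \;=\; (x \boxplus_g y \boxplus_g z) \cup D_0,
\end{equation*}
where $D_0 = D$ if $0 \in x \boxplus_g y \boxplus_g z$ and $D_0 = \emptyset$ otherwise. The analogous computation for $x \boxplus (y \boxplus z)$ produces the same expression by associativity inside $F_g$, closing this case. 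The intermediate case, in which exactly two of the three $\psi$-values coincide, uses the same bookkeeping but is shorter because one of the two inner sums is already a singleton; the only subtle point, handled by reversibility inside the lower-level hypergroup, is to verify that every element of $D$ still reappears as some $b \boxplus z$ with $b \in x \boxplus y$.

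Stringency transfers immediately: if $\psi(x) \neq \psi(y)$ then $x \boxplus y$ is a singleton by definition, and if $\psi(x) = \psi(y)$ with $x \neq -y$ in $F$, then $x \neq -y$ in $F_{\psi(x)}$ by uniqueness of hyperinverses, whence $0 \notin x \boxplus_{\psi(x)} y$ and $x \boxplus y = x \boxplus_{\psi(x)} y$ is a singleton by stringency of $F_{\psi(x)}$. Commutativity is immediate from the manifest left-right symmetry of the four defining clauses, invoking commutativity of $F_g$ only in the last two.
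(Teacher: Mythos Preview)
Your proposal is correct and follows essentially the same route as the paper's proof: both do a case split on the relative order of the $\psi$-values for associativity, reduce the all-equal case to the hypergroup structure of $F_g$, verify hyperinverses and reversibility/invertibility by the same case analysis, and observe that stringency and commutativity transfer clause by clause.

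If anything, you are more careful than the paper in two places. First, your explicit formula $(x \boxplus y) \boxplus z = (x \boxplus_g y \boxplus_g z) \cup D_0$ in the all-equal case makes transparent what the paper asserts in one sentence. Second, you correctly flag the ``subtle point'' in the two-equal case (say $\psi(x) = \psi(y) = g > \psi(z)$ with $0 \in x \boxplus_g y$): to see that every $d \in D$ with $\psi(d) = \psi(z)$ lies in some $w \boxplus z$ with $w \in D$, one must invoke reversibility inside $F_{\psi(z)}$. The paper's sentence ``both \ldots\ evaluate to the sum of the other two $x_j$'' passes over this silently. One trivial omission on your side: you verify only the implication $y \in x \boxplus(-z)$ of the reversibility property and not the companion $z \in (-y) \boxplus x$, but the latter follows by the symmetric argument.
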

\begin{proof}
For associativity, suppose we have $x_1, x_2, x_3 \in F$. If any of them is 0, then associativity is clear, so suppose that each $x_i$ is in $H$. If one of the elements $\psi(x_i)$ of $G$, say $\psi(x_{i_0})$, is bigger than the others, then $x_1 \boxplus (x_2 \boxplus x_3) = \{x_{i_0}\} = (x_1 \boxplus x_2) \boxplus x_3$. If one of the $\psi(x_i)$ is smaller than the others, then both $x_1 \boxplus (x_2 \boxplus x_3)$ and $(x_1 \boxplus x_2) \boxplus x_3$ evaluate to the sum of the other two $x_j$. So we may suppose that all $\psi(x_i)$ are equal, taking the common value $g$. If $0 \not \in x_1 \boxplus_g x_2 \boxplus_g x_3$, then both $x_1 \boxplus (x_2 \boxplus x_3)$ and $(x_1 \boxplus x_2) \boxplus x_3$ evaluate to $x_1 \boxplus_g x_2 \boxplus_g x_3$, whereas if $0 \in x_1 \boxplus_g x_2 \boxplus_g x_3$, then both evaluate to $(x_1 \boxplus_g x_2 \boxplus_g x_3) \cup \psi^{-1}(g \downarrow)$. The hyperinverse of 0 is 0 and the hyperinverse of any other $x$ is its hyperinverse in $F_{\psi(x)}$, and $0$ is the additive identity. 

For invertibility of sums, suppose we have $x, y, z \in F$. We would like to show that $x\in y\boxplus z$ if and only if $-x \in -z \boxplus -y$. It suffices to prove one direction, say if $x\in y\boxplus z$, then $-x \in -z \boxplus -y$. If $\psi(y)< \psi(z)$, then $x\in y\boxplus z = \{z\}$ and $\psi(-y) < \psi(-z)$. So $- z\boxplus -y = \{-z\} = \{-x\}$. Similarly, we have if $\psi(y)> \psi(z)$, then $- z\boxplus -y = \{-x\}$. If $\psi(x) = \psi(y) = \psi(z)$, then the statement holds by the reversibility of the hypergroup $(F_{\psi(y)}, \boxplus_{\psi(y)}, 0)$. Otherwise we have $\psi(x) < \psi(y) = \psi(z)$, and so $y = -z$. Then $\psi(-x) < \psi(-y) = \psi(-z)$ and so $-x \in -z\boxplus -y$. 

Then, we would like to show that $F$ is stringent if every hypergroup $F_g$ in $(F_g \,|\, g\in G)$ is. By definition of $F$, we just need to show that for any $x, y \in F$ with $\psi(x) = \psi(y)$ and $0 \not \in x \boxplus_{\psi(x)} y$, $x\boxplus y$ is a singleton. As $F_{\psi(x)}$ is stringent and $0 \not \in x \boxplus_{\psi(x)} y$, then $x \boxplus_{\psi(x)} y$ is a singleton. So $x\boxplus y = x \boxplus_{\psi(x)} y$ is also a singleton.

Finally, it is clear that $\boxplus$ is commutative if each $\boxplus_g$ is commutative.
\end{proof}

Now we begin the proof of the Classification Theorem. We first introduce a useful lemma. Note that this lemma automatically holds for stringent commutative hypergroups, so readers only interested in that case may skip the proof.

\begin{lem}\label{x+y = y+x}
Let $F$ be a stringent hypergroup. If $y\in x\boxplus y$, then $y\in y\boxplus x$.
\end{lem}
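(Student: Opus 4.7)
The plan is to do a case analysis on whether $x = -y$, exploiting stringency to pin every hyperaddition $a \boxplus b$ with $a \neq -b$ down to a single element, and then playing associativity against the reversibility and invertibility axioms to trade $x \boxplus y$ for $y \boxplus x$.

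The trivial case is $x = -y$: applying reversibility to $y \in -y \boxplus y$ (with $a = y$, $b = -y$, $c = y$) gives $-y \in y \boxplus -y$, and invertibility of sums then promotes this to $y \in y \boxplus -y = y \boxplus x$. When $x \neq -y$, stringency forces $x \boxplus y = \{y\}$ and $y \boxplus x = \{z\}$ for some unique $z$, and the task reduces to proving $z = y$. Associating $y \boxplus x \boxplus y$ two ways yields the identity $z \boxplus y = y \boxplus y$. If $y = -y$, there is a clean shortcut: reversibility applied to $y \in x \boxplus y$ gives $y \in -x \boxplus y$; stringency (using $-x \neq -y$) pins this to $\{y\}$; and invertibility converts it to $-y \in -y \boxplus x$, i.e., $y \in y \boxplus x$. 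If $y \neq -y$, then $y \boxplus y = \{w\}$ is itself a singleton, and whenever $w \neq y$, reversibility places both $z$ and $y$ in the singleton $w \boxplus -y$, so $z = y$.

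The delicate sub-case, where I expect the main obstacle, is $y \neq -y$ together with $y \boxplus y = \{y\}$, because the naive ``cancel $y$'' trick fails. My plan is to first derive the symmetric identity $y \boxplus z = \{z\}$ by associating $y \boxplus y \boxplus x$ two ways, then assume $z \neq y$ for contradiction and use reversibility plus stringency to obtain $-z \boxplus y = \{y\}$. The decisive step is to associate $-z \boxplus y \boxplus z$ two ways: the left grouping evaluates to $\{y\} \boxplus z = y \boxplus z = \{z\}$, while the right grouping is $-z \boxplus \{z\} = -z \boxplus z$, which must contain $0$. This forces $0 \in \{z\}$, hence $z = 0$, hence $0 \in y \boxplus x$, hence $x = -y$, contradicting the case assumption. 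The clever ingredient is injecting the hyperinverse pair $-z, z$ into a triple sum so that the resulting occurrence of $0$ collides with a singleton; spotting that this is the right triple sum to associate is the heart of the argument.
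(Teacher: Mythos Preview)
Your proof is correct. The case analysis and the use of stringency, reversibility, and associativity are all sound; in particular, the delicate sub-case $y\neq -y$, $y\boxplus y=\{y\}$ is handled cleanly by your three-term associativity trick on $-z\boxplus y\boxplus z$, which forces $z=0$ and hence $x=-y$, a contradiction.

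Your route differs from the paper's in the main case. The paper, after disposing of $x=y$, $x=-y$, and $y=-y$, treats the remaining situation uniformly: it sets $t$ equal to the unique element of $-y\boxplus -y$ and examines the four-term sum $z\boxplus y\boxplus t = (y\boxplus x)\boxplus y\boxplus(-y\boxplus-y)$, which contains $0$; this places both $-z$ and $-y$ in $y\boxplus t$, and stringency forces either $z=y$ or $t=-y$, the latter being dispatched by one more application of the same idea. You instead split further according to whether $y\boxplus y=\{y\}$, getting an immediate conclusion when $w:=y\boxplus y\neq y$ (both $z$ and $y$ land in the singleton $w\boxplus-y$), and reserving the three-term trick for the residual sub-case. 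The paper's argument is slightly more economical in its case structure, while yours trades an extra sub-case for a shorter final associativity computation. Both approaches are of comparable difficulty and rely on the same underlying mechanism: manufacturing two elements in a set that stringency forces to be a singleton.
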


\begin{proof} We will divide the proof into four cases.

\emph{Case 1: } If $x = y$, this is immediate.

\emph{Case 2: } If $x = -y$, then by reversibility we get
$$y\in x\boxplus y \Rightarrow y\in -y \boxplus y \Rightarrow y \in y \boxplus y \Rightarrow y\in y\boxplus -y \Rightarrow y\in y\boxplus x.$$

\emph{Case 3: } If $y = -y$, then by reversibility and case 2 we get
$$y\in x\boxplus y \Rightarrow x \in y \boxplus -y \Rightarrow x \in -y \boxplus y \Rightarrow y\in y\boxplus x.$$

\emph{Case 4: } Now we suppose $x\notin \{y, -y\}$ and $y\neq -y$. Let $z\in F^\times$ be such that $y\boxplus x = \{z\}$ and let $t\in F^\times$ be such that $-y \boxplus -y = \{t\}$. Then by associativity we get
$$z \boxplus y \boxplus t = (y \boxplus x) \boxplus y \boxplus (-y \boxplus -y) =  y \boxplus (x \boxplus y) \boxplus -y \boxplus -y = y \boxplus y \boxplus -y \boxplus -y \ni 0.$$
So we get $0\in z \boxplus y \boxplus t$, thus $-z \in y\boxplus t$. As $t \in -y \boxplus -y$, we have $-y \in y\boxplus t$. So $-z, -y \in y\boxplus t$. Then by stringency we get either $z = y$ or $t = -y$. If $z = y$, then we are done. Now assume $t = -y$. Thus $-z\in y\boxplus t = y\boxplus -y$, and so $-y \in -y\boxplus z$. As $y\boxplus x = \{z\}$, we have $x\in -y\boxplus z$. So $-y, x\in -y\boxplus z$. Then by stringency we get either $x = -y$ or $z = y$. By case 2, the statement holds.
\end{proof}

Now we define a relation on $F^{\times}$ which roughly corresponds to the ordering of $G$.

\begin{defn}\label{def.less.reln.group}
We define a relation $<_F$ on $F^{\times}$ by $x <_F y$ if $x \boxplus y = y \boxplus x = \{y\}$ but $x \neq y$. 
\end{defn}

\begin{lem}
$<_F$ is a strict partial order on $F^{\times}$.
\end{lem}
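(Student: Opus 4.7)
The plan is to verify the two defining properties of a strict partial order: irreflexivity and transitivity. Asymmetry then follows automatically. Irreflexivity is immediate from the very definition of $<_F$, since $x <_F x$ would require $x \neq x$.

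For transitivity, suppose $x <_F y$ and $y <_F z$; I want to show $x <_F z$, i.e. that $x \boxplus z = z \boxplus x = \{z\}$ and $x \neq z$. The key observation is that the defining conditions make $x \boxplus y$, $y \boxplus x$, $y \boxplus z$, and $z \boxplus y$ all singletons, so associativity lets me manipulate them as ordinary elements. Concretely, since $y \boxplus z = \{z\}$ and $x \boxplus y = \{y\}$, associativity gives
$$x \boxplus z \;=\; x \boxplus (y \boxplus z) \;=\; (x \boxplus y) \boxplus z \;=\; y \boxplus z \;=\; \{z\},$$
and the symmetric computation using $z \boxplus y = \{z\}$ and $y \boxplus x = \{y\}$ yields $z \boxplus x = \{z\}$.

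It remains to rule out $x = z$. If $x = z$, then from $x <_F y$ I have $y \boxplus x = \{y\}$, while from $y <_F z = x$ I have $y \boxplus x = \{x\}$. Comparing forces $x = y$, contradicting $x <_F y$.

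I do not foresee any real obstacle here; the only mildly delicate point is that the definition of $<_F$ mandates both $x \boxplus y$ and $y \boxplus x$ to equal $\{y\}$, and it is precisely this two-sided singleton requirement which makes the associativity argument go through cleanly without invoking commutativity or Lemma~\ref{x+y = y+x}.
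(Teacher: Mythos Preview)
Your proof is correct and follows essentially the same approach as the paper's: irreflexivity is immediate from the clause $x \neq y$ in the definition, transitivity of the singleton condition is obtained by the same associativity computation $x \boxplus z = x \boxplus y \boxplus z = y \boxplus z = \{z\}$ (and symmetrically), and the exclusion of $x = z$ is argued by comparing the two values of $y \boxplus x$ to force $x = y$ (the paper phrases the same contradiction as $y = z$).
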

\begin{proof}
Irreflexivity is built into the definition, so it remains to check transitivity. Suppose that $x <_F y <_F z$. Then $x \boxplus z = x \boxplus y \boxplus z = y \boxplus z = \{z\}$. Similarly, $z \boxplus x = \{z\}$. We cannot have $x = z$, since then $\{y\} = y \boxplus x = y \boxplus z = \{z\}$, so $y = z$, which is a contradiction.
\end{proof}

\begin{lem} \label{pm}
If $x <_F y$, then 
\begin{enumerate}
\item $\pm x <_F \pm y$.
\item for any $z \in F^{\times}$ we have either $x <_F z$ or $z <_F y$.
\end{enumerate}
\end{lem}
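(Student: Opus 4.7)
For part~(1), only $-x<_F -y$ requires argument. Applying invertibility of sums to $y\in x\boxplus y$ and $y\in y\boxplus x$ yields $-y\in -y\boxplus -x$ and $-y\in -x\boxplus -y$; stringency then upgrades these to $-x\boxplus -y=-y\boxplus -x=\{-y\}$. The required non-degeneracy $x\ne -y$ comes for free, since $x=-y$ would force $0\in x\boxplus y$, contradicting $x\boxplus y=\{y\}$.

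For part~(2), I would dispose of the trivial cases $z\in\{x,y\}$ and then handle $z=-y$ via a short reversibility chain: $y\in x\boxplus y$ implies $x\in y\boxplus -y$, which implies $-y\in -y\boxplus x$, and the two-directional form of Lemma~\ref{x+y = y+x} (easily obtained from the stated direction via invertibility of sums) together with stringency gives $x\boxplus -y=-y\boxplus x=\{-y\}$, so $x<_F -y$. The substantial case is $z\notin\{x,y,-y\}$, where stringency makes $y\boxplus z$ a singleton $\{w\}$ with $w\ne 0$. Associativity gives $x\boxplus w=(x\boxplus y)\boxplus z=y\boxplus z=\{w\}$, and Lemma~\ref{x+y = y+x} gives $w\boxplus x=\{w\}$. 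I would then split on $w$: if $w=y$, then Lemma~\ref{x+y = y+x} gives $z\boxplus y=\{y\}$, hence $z<_F y$; if $w=z$, then $y<_F z$ and transitivity of $<_F$ yields $x<_F z$.

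The remaining subcase $w\notin\{x,y,z\}$ is the crux. First I would rule out $x=-z$: reversibility applied to $w\in y\boxplus z$ gives $y\in w\boxplus -z$ and hence $w\boxplus -z=\{y\}$ by stringency (using $w\ne z$), so $x=-z$ would contradict $w\boxplus x=\{w\}$ since $w\ne y$. With $x\ne -z$, the sum $x\boxplus z$ is a singleton $\{u\}$, and the chain $\{w\}=y\boxplus z=(y\boxplus x)\boxplus z=y\boxplus(x\boxplus z)=y\boxplus u$ together with reversibility puts both $u$ and $z$ inside the singleton $-y\boxplus w$ (a singleton because $y\ne w$), forcing $u=z$. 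The analogous computation with $z\boxplus y=\{w'\}$ (where $w'\ne y$, since otherwise Lemma~\ref{x+y = y+x} would give $w=y$) yields $z\boxplus x=\{z\}$, so $x<_F z$. The main difficulty is this cancellation-by-reversibility step: because the hypergroup need not be commutative, $x\boxplus z$ and $z\boxplus x$ must be treated separately, and the degenerate configuration $x=-z$ must be ruled out in order to make stringency available to produce the singletons that the cancellation requires.
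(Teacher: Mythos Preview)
Your argument is essentially correct, but part~(2) takes a considerably more laborious route than the paper's, and there are two small organizational slips.

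First, in part~(1) the notation $\pm x <_F \pm y$ is meant to cover all four sign combinations; you only argue $-x <_F -y$. The paper instead proves $-x <_F y$ directly (reversibility turns $y \in x \boxplus y$ into $y \in -x \boxplus y$, then stringency applies since $x \neq -y$), and since invertibility of sums already gives $a <_F b \Leftrightarrow -a <_F -b$, all four follow. You compensate by proving $x <_F -y$ separately inside part~(2), so the overall proof is complete, but part~(1) as written is not.

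Second, your case split in part~(2) handles $w = y$, $w = z$, and then declares the remaining subcase to be $w \notin \{x,y,z\}$, leaving $w = x$ unaddressed. Fortunately your ``crux'' argument only uses $w \neq y$ and $w \neq z$, so it already covers $w = x$; the remaining subcase should simply read $w \notin \{y,z\}$.

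The paper's proof of~(2) is quite different and avoids the whole case analysis on $w$. Assuming $z \not<_F y$ and $z \notin \{y,-y\}$, one observes (via Lemma~\ref{x+y = y+x}) that $y \notin z \boxplus y$ and $y \notin y \boxplus z$; it follows that the triple sums $z \boxplus y \boxplus -y$ and $-y \boxplus y \boxplus z$ are singletons, and since they contain $z$ they equal $\{z\}$. The key move is then that $x <_F y$ forces $x \in y \boxplus -y$ and $x \in -y \boxplus y$ by reversibility, so
\[
z \boxplus x \subseteq z \boxplus (y \boxplus -y) = \{z\}, \qquad x \boxplus z \subseteq (-y \boxplus y) \boxplus z = \{z\}.
\]
This containment trick replaces your cancellation-by-reversibility computation entirely, handles both orders of the sum at once, and requires no splitting on the value of $y \boxplus z$. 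Your approach works, but the paper's is shorter and exposes more clearly why $x <_F y$ is the relevant hypothesis: it is precisely the statement that $x$ lies in $y \boxplus -y$.
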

\begin{proof}
\begin{enumerate}
\item It suffices to prove that $-x <_F y$ by invertibility of sums. As $x<_F y$, then $x \neq -y$ since $0\in -y\boxplus y$. As $x\boxplus y = \{y\}$, then $y \in -x\boxplus y$. By stringency, $-x\boxplus y = \{y\}$. Similarly, $y\boxplus -x = \{y\}$. So $-x <_F y$.
\item By (1), we have $\pm x <_F \pm y$. Suppose that $z \not <_F y$. If $z\in \{ y, -y\}$, then we have $x <_F z$. Otherwise, $y \not \in z \boxplus y$ and $y\notin y\boxplus z$ by Lemma~\ref{x+y = y+x}. Then $0 \not \in z \boxplus y \boxplus -y$ and $0 \not \in -y \boxplus y \boxplus z$. So by stringency, we have $z \boxplus y \boxplus -y = \{z\}$ and $ -y \boxplus y \boxplus z = \{z\}$. However, $x \in y \boxplus -y$ and $x\in -y\boxplus y$, since $x <_F y$. So $z \boxplus x = \{z\}$ and $x\boxplus z = \{z\}$. Now if $z \neq x$ this implies that $x <_F z$, but if $z = x$ then we have $z <_F y$.
\end{enumerate}
\end{proof}

Now we define a relation $\sim_F$ on $F^{\times}$ by $x \sim_F y$ if and only if both $x \not <_F y$ and $y \not <_F x$.

\begin{lem}\label{eqv.reln.group}
$\sim_F$ is an equivalence relation.
\end{lem}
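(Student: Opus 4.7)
The plan is to verify the three defining properties of an equivalence relation, with essentially all the work going into transitivity and the previous Lemma~\ref{pm}(2) doing the heavy lifting.

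For reflexivity, note that $x \sim_F x$ holds because the relation $<_F$ is irreflexive: the clause ``$x \neq y$'' is built into Definition~\ref{def.less.reln.group}, so neither $x <_F x$ nor (vacuously) its reverse can hold. Symmetry is immediate because the definition of $\sim_F$ is already symmetric in its two arguments.

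The only substantive step is transitivity. Suppose $x \sim_F y$ and $y \sim_F z$; I want to rule out both $x <_F z$ and $z <_F x$. If $x <_F z$ held, I would apply Lemma~\ref{pm}(2) with the element $y \in F^\times$, obtaining the disjunction $x <_F y$ or $y <_F z$. Each disjunct directly contradicts one of the hypotheses $x \sim_F y$ or $y \sim_F z$. Symmetrically, if $z <_F x$ held, Lemma~\ref{pm}(2) applied to $y$ gives $z <_F y$ or $y <_F x$, again contradicting one of the two hypotheses. Therefore $x \sim_F z$.

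There is no real obstacle here: the hard work was already done in Lemma~\ref{pm}(2), whose conclusion is precisely the trichotomy-style statement needed to glue together chains of non-comparability. In effect, Lemma~\ref{pm}(2) says that any element $z$ is forced to compare with $x$ or $y$ whenever $x <_F y$, which is exactly what transitivity of the incomparability relation requires.
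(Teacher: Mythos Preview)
Your proof is correct and follows essentially the same route as the paper: reflexivity and symmetry are immediate, and transitivity is obtained from Lemma~\ref{pm}(2) by deriving a contradiction from either $x <_F z$ or $z <_F x$. The paper's proof is identical in substance, only it invokes the dichotomy from Lemma~\ref{pm}(2) without citing it by name.
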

\begin{proof}
$\sim_F$ is clearly reflexive and symmetric. For transitivity, suppose that $x \sim_F y$ and $y \sim_F z$. If $x <_F z$ then either $x <_F y$, contradicting $x \sim_F y$, or else $y <_F z$, contradicting $y \sim_F z$, so this is impossible. Similarly we have $z \not <_F x$. So $x \sim_F z$.
\end{proof}

The following results are obvious and we will put them together.

\begin{lem}\label{eqv.set}
 \begin{enumerate}
 \item If $x \sim_F y <_F z$ or $x <_F y \sim_F z$, then $x <_F z$.
 \item The relation $<_F$ could lift to a relation (denoted by $<_F'$) on the set $G$ of $\sim_F$-equivalence classes and $(G, <_F')$ is a totally ordered set. 
\item For every $x\in F^\times$, $-x \sim_F x$.
\item Let $x, y, z \in F^{\times}$ with $x \neq -y$, $y \neq -z$ and $z \neq -x$. If $0 \in x \boxplus y \boxplus z$, then $x \sim_F y \sim_F z$.
\end{enumerate}
\end{lem}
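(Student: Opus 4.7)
The plan is to dispatch (1)--(4) in sequence, using Lemmas \ref{pm} and \ref{x+y = y+x} as the main tools; only (4) requires real work beyond the definitions.

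I would start with (3): if $x <_F -x$ then $x \boxplus -x = \{-x\}$, but $0 \in x \boxplus -x$ forces $-x = 0$ and hence $x = 0$, contradicting $x \in F^\times$; the case $-x <_F x$ is symmetric, so $x \sim_F -x$. For (1), I would invoke Lemma \ref{pm}(2): given $y <_F z$ and any $x \in F^\times$, either $y <_F x$ or $x <_F z$, and the former is excluded by $x \sim_F y$; the mirror case $x <_F y \sim_F z$ is symmetric. Part (2) is then bookkeeping: well-definedness of the induced relation $<_F'$ on equivalence classes follows by applying (1) twice; transitivity and irreflexivity are inherited from $<_F$; and trichotomy for $<_F'$ is built into the definition of $\sim_F$, since for any $x, y \in F^\times$ exactly one of $x <_F y$, $y <_F x$, $x \sim_F y$ holds, with mutual exclusion following from the strict partial order structure of $<_F$.

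The substantive step is (4), which I expect to be the main obstacle. The plan is to exploit stringency to collapse each pairwise hypersum to a singleton and then read off contradictions. Since $x \neq -y$, stringency gives $x \boxplus y = \{w\}$ for a unique $w$, and $0 \in (x \boxplus y) \boxplus z = w \boxplus z$ forces $w = -z$ by uniqueness of the hyperinverse; similarly $y \boxplus z = \{-x\}$. If $x <_F y$, then $x \boxplus y = \{y\}$, giving $y = -z$ and contradicting $y \neq -z$; if $y <_F x$, then $x \boxplus y = \{x\}$, giving $z = -x$ and contradicting $z \neq -x$. Hence $x \sim_F y$, and the same reasoning applied to $y \boxplus z = \{-x\}$, using $z \neq -x$ to rule out $y <_F z$ and $x \neq -y$ to rule out $z <_F y$, yields $y \sim_F z$.
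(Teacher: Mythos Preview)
Your proposal is correct and follows essentially the same approach as the paper. The paper treats (1) as trivial (via Lemma~\ref{pm}(2), just as you do), deduces (2) from (1), proves (3) by observing that $0 \in x \boxplus -x$ prevents $x <_F -x$, and handles (4) by the same contradiction you give (if $x <_F y$ then $x \boxplus y = \{y\}$ forces $y = -z$); your version of (4) is in fact more carefully spelled out than the paper's ``without loss of generality'' one-liner.
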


\begin{proof} For (1), the proof is trivial. (1) implies (2).

(3) As $0\in x\boxplus -x$, we have $x \not<_F -x$ and $-x \not <_F x$. So $-x \sim_F x$.

(4) If not, then without loss of generality we have $x <_F y$, and so $-z \in x \boxplus y = \{y\}$, giving $y = -z$, contradicting our assumptions.
\end{proof}

\begin{lem}\label{addition}
Let $(x_i \, | \, i \in I)$ be a finite family of elements of $F$, and $z \in F$ with $x_i <_F z$ for all $i \in I$. Then for any $y \in \boxplus_{i \in I} x_i$ we have $y <_F z$. 
\end{lem}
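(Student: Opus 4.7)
The plan is induction on $|I|$. The base case $|I|=1$ is immediate, since $y=x_1<_F z$. For the inductive step, fix any $i_0\in I$; by associativity of $\boxplus$, every $y\in\boxplus_{i\in I}x_i$ arises from some $w\in\boxplus_{i\in I\setminus\{i_0\}}x_i$ with $y\in w\boxplus x_{i_0}$. If $w=0$ then $y=x_{i_0}<_F z$ and we are done. Otherwise the inductive hypothesis gives $w<_F z$, and the lemma reduces to the following two-summand sub-claim: whenever $a,b\in F^\times$ satisfy $a<_F z$ and $b<_F z$, and $y\in a\boxplus b$ is non-zero, then $y<_F z$. (The case $y=0$ requires no argument, since $<_F$ is only defined on $F^\times$.)

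For the sub-claim I would argue by contradiction. Suppose $y\not<_F z$. Applying Lemma~\ref{pm}(2) to $a<_F z$ with $y$ in the role of the free variable yields the dichotomy $a<_F y$ or $y<_F z$; since the latter fails by assumption, we get $a<_F y$. The same argument applied to $b<_F z$ gives $b<_F y$. Next, invoke reversibility on $y\in a\boxplus b$ to obtain $a\in y\boxplus(-b)$. Since $b<_F y$, part (1) of Lemma~\ref{pm} yields $-b<_F y$, and hence $y\boxplus(-b)=\{y\}$. This forces $a=y$, contradicting $a<_F y$ (which requires $a\neq y$).

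The main obstacle is the two-summand sub-claim; once it is in hand, the overall induction is routine bookkeeping. The pivotal move inside the sub-claim is recognizing that Lemma~\ref{pm}(2) can be leveraged in both directions simultaneously, pushing both $a$ and $b$ strictly below $y$ as soon as $y\not<_F z$, after which reversibility of sums together with Lemma~\ref{pm}(1) collapses $y$ onto $a$ and delivers the contradiction.
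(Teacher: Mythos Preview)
Your argument is correct. The inductive reduction to two summands is the same in spirit as the paper's, but your two-summand sub-claim is handled differently.

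The paper argues the two-element case directly from associativity: from $y\in x_1\boxplus x_2$ one gets
\[
y\boxplus z\subseteq x_1\boxplus x_2\boxplus z = x_1\boxplus z=\{z\},\qquad
z\boxplus y\subseteq z\boxplus x_1\boxplus x_2=z\boxplus x_2=\{z\},
\]
and then rules out $y=z$ by a short reversibility computation (if $z\in x_1\boxplus x_2$ then $-x_1\in x_2\boxplus(-z)=\{-z\}$, contradicting $x_1<_F z$). So the paper never invokes Lemma~\ref{pm} and works purely with associativity and the definition of $<_F$.

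Your route instead pushes the work onto Lemma~\ref{pm}: from $y\not<_F z$ you extract $a<_F y$ and $b<_F y$ via part (2), and then collapse $y$ onto $a$ using reversibility and part (1). This is a perfectly clean alternative; it trades the explicit associativity calculation for the order-theoretic dichotomy already packaged in Lemma~\ref{pm}. The paper's version is slightly more self-contained, while yours illustrates that the conclusion is really an order-theoretic fact once Lemma~\ref{pm} is in hand.
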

\begin{proof}
It suffices to prove this when $I$ has just two elements, say $x_1$ and $x_2$, since the general result then follows by induction. Suppose $x_1, x_2 <_F z$ and $y \in x_1 \boxplus x_2$, then we have $$y \boxplus z \subseteq x_1 \boxplus x_2 \boxplus z = x_1 \boxplus z = \{z\}$$
and $$z \boxplus y \subseteq z \boxplus x_1 \boxplus x_2  = z \boxplus x_2 = \{z\}.$$ 
So $y \boxplus z = \{z\}$ and $z\boxplus y = \{z\}$. If $z \in x_1 \boxplus x_2$ then $-x_1 \in x_2 \boxplus -z = \{-z\}$, contradicting $x_1 <_F z$. So $z \not \in x_1 \boxplus x_2$, and so $z \neq y$. So $y<_F z$.
\end{proof}

It follows from the above results that the sum $x \boxplus y$ is given by $\{x\}$ if $x >_F y$, by $\{y\}$ if $x <_F y$, by $\{z\}$ for some $z$ in the $\sim_F$-equivalence class of $x$ and $y$ if $x \sim_F y$ but $x \neq -y$, and by some subset of that class together with $\{t \, | \, t <_F x\} \cup \{0\}$ if $x = -y$. This looks very similar to the hyperaddition given in Definition \ref{hypergroup.constr}.


We now want to consider the structure of the equivalence classes. Let $g$ be an equivalence class in $G$ and let $F_g$ be the set $g \cup \{0\}$. We can define a multivalued binary operation $\boxplus_g$ on $F_g$ by $x \boxplus_g y = (x \boxplus y) \cap F_g$.

\begin{lem}\label{subgroup.eqv.hypergroup}
For any element $g$ in $G$, $F_g$ is again a hypergroup, with hyperaddition given by $\boxplus_g$.
\end{lem}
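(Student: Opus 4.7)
The plan is to verify directly the four hypergroup axioms for $(F_g, \boxplus_g, 0)$, exploiting the explicit description of $x \boxplus y$ in $F$ recorded in the paragraph following Lemma~\ref{addition}. First I would check that $\boxplus_g$ is well-defined, namely that $(x \boxplus y) \cap F_g$ is non-empty for all $x, y \in F_g$. If $x = 0$ or $y = 0$ the sum is already a singleton in $F_g$; if $x, y \in g$ with $x \neq -y$ then stringency forces $x \boxplus y = \{w\}$, and the post-Lemma~\ref{addition} description (together with Lemma~\ref{pm} and reversibility) places $w$ in the $\sim_F$-class $g$, so $w \in F_g$; finally if $x = -y$ then $0 \in (x \boxplus y) \cap F_g$.

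Next the identity law $0 \boxplus_g x = x \boxplus_g 0 = \{x\}$ is immediate since $\{x\} \subseteq F_g$. For hyperinverses, Lemma~\ref{eqv.set}(3) gives $-x \in F_g$ whenever $x \in F_g$, and $0 \in x \boxplus (-x)$ in $F$ yields $0 \in x \boxplus_g (-x)$; uniqueness is inherited from $F$ because $F_g \subseteq F$. Reversibility descends cleanly: if $x \in y \boxplus_g z$ then $x \in y \boxplus z$, so $-x \in -z \boxplus -y$ in $F$, and since $F_g$ is closed under negation, $-x \in (-z \boxplus -y) \cap F_g = (-z) \boxplus_g (-y)$.

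The main obstacle is associativity, and my strategy is to show that both $(x \boxplus_g y) \boxplus_g z$ and $x \boxplus_g (y \boxplus_g z)$ equal $(x \boxplus y \boxplus z) \cap F_g$, so that associativity in $F$ closes the argument. Expanding $x \boxplus_g (y \boxplus_g z) = \bigcup_{u \in (y \boxplus z) \cap F_g} (x \boxplus u) \cap F_g$, one inclusion in $x \boxplus_g (y \boxplus_g z) \subseteq (x \boxplus y \boxplus z) \cap F_g$ is immediate from the definition. For the reverse inclusion, I would show that when $u \in y \boxplus z$ lies outside $F_g$, every $v \in (x \boxplus u) \cap F_g$ still comes from some $u' \in (y \boxplus z) \cap F_g$. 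The well-definedness analysis forces $y, z \in g$ and $y = -z$, so $0 \in (y \boxplus z) \cap F_g$, while such an escaped $u$ must satisfy $u <_F y$ by the description after Lemma~\ref{addition}. Then either $x = 0$, in which case $(x \boxplus u) \cap F_g = \{u\} \cap F_g = \emptyset$ (vacuous), or $x \in g$, and then $u <_F x$ by Lemma~\ref{eqv.set}(1), so $x \boxplus u = \{x\}$; hence $v = x = x \boxplus 0$ and $u' = 0$ serves as the required witness. The symmetric computation yields $(x \boxplus_g y) \boxplus_g z = (x \boxplus y \boxplus z) \cap F_g$, finishing the proof.
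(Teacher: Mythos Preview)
Your proposal is correct and follows essentially the same approach as the paper's proof: both verify the axioms directly and handle associativity by showing $x \boxplus_g (y \boxplus_g z) = (x \boxplus y \boxplus z) \cap F_g$ (and symmetrically), with the key step being that an intermediate summand $u \in y \boxplus z$ escaping $F_g$ forces $y = -z$ and $u <_F x$, so that $u$ may be replaced by $0$. Your write-up is slightly more careful in explicitly checking non-emptiness of $\boxplus_g$ and in separating out the degenerate case $x = 0$, but the argument is the same.
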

\begin{proof} For every $x \in F_g$, we have $0\boxplus_g x = \{x\}\cap F_g = \{x\}$. 

Suppose $0\in x\boxplus_g y$, then $0\in x\boxplus y$, and so $y = -x$. Similarly, if $0\in y \boxplus_g x$, then $y = -x$.

For invertibility of sums, let $x, y, z \in F_g$ with $x\in y \boxplus_g z$. Then we have $x\in y \boxplus z$. By invertibility of sums of $F$, $-x \in -z \boxplus -y$. So $-x \in -z \boxplus_g -y$.

For associativity, suppose we have $x, y, z\in F_g$. We would like to show that
$$(x\boxplus_g y) \boxplus_g z = x \boxplus_g (y\boxplus_g z).$$

Let $t\in F_g$. Let us first show that $t\in x \boxplus_g (y\boxplus_g z)$ if and only if $t\in x \boxplus (y\boxplus z)$. It is clear that $x \boxplus_g (y\boxplus_g z) \subseteq x \boxplus (y\boxplus z)$. So it suffices to prove the other direction. We suppose that $t\in x \boxplus (y\boxplus z)$. Then there exists $k\in F$ such that $k\in y\boxplus z$ and $t\in x\boxplus k$. If $k\in F_g$, then we are done. If not, we have $y = -z$ and $k<_F y$. So we also have $k<_F x$, and so $t = x \in x\boxplus_g 0 \subseteq x\boxplus_g ( y \boxplus_g z)$. Similarly, we can also get $t\in (x \boxplus_g y) \boxplus_g z$ if and only if $t\in (x \boxplus y) \boxplus z$. By associativity of $F$, $(x\boxplus y) \boxplus z = x \boxplus (y\boxplus z)$. So $(x\boxplus_g y) \boxplus_g z = x \boxplus_g (y\boxplus_g z).$
\end{proof}

\begin{lem}\label{Fg}
For any element $g$ in $G$, $F_g$ is either isomorphic to $\mathbb{K}$ or isomorphic to $\mathbb{S}$ or is a group.
\end{lem}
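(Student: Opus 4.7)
The plan is to analyze $F_g$ by first pinning down the possible form of $x\boxplus_g(-x)$ for $x\in F_g^\times$ and then splitting into two cases according to whether $F_g^\times$ contains an \emph{idempotent} element, meaning an $x$ satisfying $x\in x\boxplus x$.

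For the preliminary step I would show $x\boxplus_g(-x)\subseteq\{0,x,-x\}$ for every $x\in F_g^\times$. Given $y\in F_g^\times$ with $y\in x\boxplus(-x)$, reversibility gives $x\in y\boxplus x$, and Lemma~\ref{x+y = y+x} gives $x\in x\boxplus y$; if $y\neq -x$, stringency forces $x\boxplus y=\{x\}$ and then $y\boxplus x=\{x\}$, so either $y=x$ or $y<_F x$, the latter contradicting $y\sim_F x$. Invertibility of sums additionally shows that the three conditions $x\in x\boxplus x$, $x\in x\boxplus(-x)$, $-x\in x\boxplus(-x)$ are equivalent, so idempotency is well-defined. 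If no element of $F_g^\times$ is idempotent, this forces $x\boxplus_g(-x)=\{0\}$ for every $x\in F_g^\times$; combined with stringency, every sum in $F_g$ is then a singleton, so the hyperaddition collapses to an ordinary binary operation and the hypergroup axioms promote $F_g$ to a (possibly non-commutative) group.

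If instead some $x\in F_g^\times$ is idempotent, I would show $F_g^\times\subseteq\{x,-x\}$. Given $y\in F_g^\times$ with $y\neq\pm x$, stringency gives $x\boxplus y=\{z\}$ for some $z\in F_g^\times$. I claim $z=x$. If $z\notin\{x,-x\}$, then associativity applied to $(x,x,y)$ combined with $x\in x\boxplus x$ yields $z\in x\boxplus z$, and Lemma~\ref{x+y = y+x} plus stringency upgrade this to $x\boxplus z=z\boxplus x=\{z\}$, forcing $x<_F z$ and contradicting $z\sim_F x$. If $z=-x$ with $x\neq -x$, associativity gives $\{-x\}=(x\boxplus x)\boxplus y=x\boxplus(x\boxplus y)=x\boxplus(-x)$, contradicting $0\in x\boxplus(-x)$. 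If $x=-x$ then $z=-x$ coincides with $z=x$. So $x\boxplus y=\{x\}$, and one more Lemma~\ref{x+y = y+x}-plus-stringency round forces $y\boxplus x=\{x\}$, whence $y=x$ (else $y<_F x$, contradicting $y\sim_F x$), a contradiction. With $F_g^\times\subseteq\{x,-x\}$ in hand: if $x=-x$ then $F_g=\{0,x\}$ with $x\boxplus_g x=\{0,x\}$, so $F_g\cong\mathbb{K}$; if $x\neq-x$ then $F_g=\{0,x,-x\}$, and idempotency together with reversibility and invertibility of sums gives $x\boxplus_g x=\{x\}$, $(-x)\boxplus_g(-x)=\{-x\}$, and $x\boxplus_g(-x)=\{0,x,-x\}$, so $F_g\cong\mathbb{S}$.

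The main obstacle is the associativity argument in the idempotent case: since $F_g$ need not be commutative I cannot reorder summands freely, so Lemma~\ref{x+y = y+x} has to be invoked repeatedly to convert one-sided identities like $x\boxplus y=\{x\}$ into their two-sided counterparts, and the subcase $x=-x$ (where $x\boxplus x$ is no longer a singleton) demands some care to exclude.
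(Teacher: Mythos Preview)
Your proof is correct and shares the paper's overall architecture---first bounding $x\boxplus_g(-x)\subseteq\{0,x,-x\}$, then splitting on whether some such sum exceeds $\{0\}$---but the argument in the non-group case is genuinely different. The paper, assuming $y\boxplus_g(-y)\neq\{0\}$ and given $z\in F_g\setminus\{0,y,-y\}$, sets $t$ to be the unique element of $-y\boxplus z$, uses Lemma~\ref{pm} to obtain $t\notin\{y,-y\}$ and hence $y\boxplus t=\{z\}$, and then derives the contradiction in a single chain:
\[
y\in(-y\boxplus_g y)\boxplus_g(t\boxplus_g -t)=-y\boxplus_g(y\boxplus_g t)\boxplus_g(-t)=-y\boxplus_g z\boxplus_g(-t)=t\boxplus_g(-t)\subseteq\{0,t,-t\}.
\]
Your route instead does a case analysis on the value of $z=x\boxplus y$, exploiting idempotency of $x$ via the identity $(x\boxplus x)\boxplus y=x\boxplus z$ to force $z=x$ and then $y<_F x$. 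Your argument avoids Lemma~\ref{pm} entirely but leans more on Lemma~\ref{x+y = y+x}; the paper's four-term computation is slicker but arguably less transparent.

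One small point to tighten: your final step, deducing $y\boxplus x=\{x\}$ from $x\boxplus y=\{x\}$, is not literally an instance of Lemma~\ref{x+y = y+x}, whose hypothesis has the form $b\in a\boxplus b$ rather than $a\in a\boxplus b$. The cleanest fix is to note that $-x$ is also idempotent (by invertibility of sums applied to $x\in x\boxplus x$), so your argument applied verbatim to the pair $(-x,-y)$ yields $(-x)\boxplus(-y)=\{-x\}$, which by invertibility of sums is equivalent to $y\boxplus x=\{x\}$.
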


\begin{proof}
For any $y$ and any $x$ with $x \in y \boxplus -y$, we have $y \in x \boxplus y$ and so $x <_F y$ unless $x \in \{-y, 0, y\}$. So for any $y \in F_g$ we have $y \boxplus_g -y \subseteq \{-y, 0, y\}$. Now suppose that there is some $y \in F_g$ with $y \boxplus_g -y \neq \{0\}$. Then $y$ is nonzero and $y, -y \in -y \boxplus_g y$. Suppose for a contradiction that there is some $z \in F_g \setminus \{-y, 0, y\}$, and let $t$ be the unique element of $-y \boxplus z$. Then by Lemma~\ref{pm}, $t \notin \{y, -y\}$, since $z \not <_F -y$. So $y \boxplus t = \{z\}$. Thus 
$$y \in y \boxplus_g 0 \subseteq (-y \boxplus_g y) \boxplus_g (t \boxplus_g -t) = -y\boxplus_g (y \boxplus_g t) \boxplus_g -t = -y \boxplus_g z \boxplus_g -t =t \boxplus_g -t,$$
and so $y \in \{-t, 0, t\}$, which is the desired contradiction.

So if there is any $y$ with $y \boxplus_g -y \neq \{0\}$, then $F_g = \{-y, 0, y\}$. It is now not hard to check that in this case if $y = -y$ then $F_g\cong \mathbb{K}$, and if $y \neq -y$ then $F_g \cong \mathbb{S}$. On the other hand, if there is no such $y$ then the hyperaddition on $F_g$ is single-valued, and so $F_g$ is a group.
\end{proof}

We can finally prove the Classification Theorem.

\begin{proof}[Proof of Theorem \ref{classgroups}]
Let $H$ be $F^{\times}$, let $G$ be given as above and let $\psi$ be the map sending an element $h$ of $H$ to its equivalence class in $G$. For any $x$ and $y$ in $H$, if $\psi(x) >_F' \psi(y)$ then $x >_F y$ and so $x \boxplus y = \{x\}$. Similarly if $\psi(x) <_F' \psi(x)$ then $x \boxplus y = \{y\}$. If $\psi(x) = \psi(y)$ then $x \boxplus_{\psi(x)} y =  (x \boxplus y) \cap F_{\psi(x)}$. So by the remarks following Lemma \ref{addition} we have that the hyperaddition of $F$ agrees with that of $\bigvee_{g\in G} F_g$ in this case as well.
\end{proof}

\section{Classification of stringent skew hyperfields}\label{sect.class.ssH}
In this section, we will present the classification of stringent skew hyperfields. We will first introduce a construction of skew hyperfields arising from short exact sequences.

\begin{defn} \label{def:hyp}
Let $F$ be a skew hyperfield and let $G$ be a totally ordered group. Suppose that we have a short exact sequence of groups $$1 \to F^{\times} \xrightarrow{\phi} H \xrightarrow{\psi} G \to 1\, .$$ Since $\phi$ is injective, by replacing $H$ with an isomorphic copy if necessary we may (and shall) suppose that $\phi$ is the identity. As usual, we define $x^h$ to be $h^{-1} \cdot x \cdot h$ for $x,h \in H$. We extend this operation by setting $0_F^h := 0_F$. We say that the short exact sequence {\bf has stable sums} if for each $h \in H$ the operation $x \mapsto x^h$ is an automorphism of $F$ (as a skew hyperfield). Since this operation clearly preserves the multiplicative structure, this is equivalent to the condition that it is always an automorphism of the underlying additive hypergroup. Furthermore, any short exact sequence as above with $H$ abelian automatically has stable sums. 

Suppose now that we have a short exact sequence with stable sums as above. Then we may define a hyperfield with multiplicative group $H$ as follows. We begin by choosing some object not in $H$ to serve as the additive identity, and we denote this object by $0$. For each $g$ in $G$, let $A_g$ be $\psi^{-1}(g) \cup \{0\}$. For any $h$ in $\psi^{-1}(g)$ there is a bijection $\lambda_h$ from $F$ to $A_g$ sending $0_F$ to $0$ and $x$ to $h \cdot x$ for $x \in F^{\times}$, and so there is a unique hypergroup structure on $A_g$ making $\lambda_h$ an isomorphism of hypergroups. Furthermore, this structure is independent of the choice of $h$ since for $h_1, h_2 \in \psi^{-1} (g)$ the map $\lambda_{h_1}^{-1}\cdot \lambda_{h_2}$ is just left multiplication by $h_1^{-1}\cdot h_2$, which is an automorphism of the additive hypergroup of $F$. In this way we obtain a well defined hypergroup structure on $A_g$, whose hyperaddition we denote by $\boxplus_g$.

Then the {\bf $G$-layering} $F \rtimes_{H, \psi} G$ of $F$ along this short exact sequence has as ground set $H \cup \{0\}$. Multiplication is given by $x \cdot y = 0$ if $x$ or $y$ is $0$ and by the multiplication of $H$ otherwise. $H\cup \{0\}$ is the underlying set of the hypergroup $\bigvee_{g\in G} A_g$, and we take the hyperaddition of $F \rtimes_{H, \psi} G$ to be given by that of this hypergroup. Explicitly; the hyperaddition is given by taking 0 to be the additive identity and setting
$$x \boxplus y = \begin{cases}
\{x\} & \text{if } \psi(x) > \psi(y), \\ 
\{y\} & \text{if } \psi(x) < \psi(y),\\ 
x \boxplus_{\psi(x)} y & \text{if } \psi(x) = \psi(y) \text{ and } 0 \not \in x \boxplus_{\psi(x)} y,\\ 
(x \boxplus_{\psi(x)} y) \cup \psi^{-1}( \psi(x)\downarrow) & \text{if } \psi(x) = \psi(y) \text{ and } 0 \in x \boxplus_{\psi(x)} y.
\end{cases}$$
\end{defn}

\begin{lem} \label{hyperfield.construct}
$F \rtimes_{H, \psi} G$ is again a skew hyperfield. If $F$ is stringent, then so is $F \rtimes_{H, \psi} G$.
\end{lem}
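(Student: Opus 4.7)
The plan is to verify the skew hyperfield axioms directly and then deduce stringency. The multiplicative monoid $(H \cup \{0\}, \cdot, 1_H)$ is inherited from $H$ with $0$ absorbing, so it is a monoid in which $0 \neq 1_H$ and every nonzero element has a multiplicative inverse. The underlying additive structure of $F \rtimes_{H,\psi} G$ is, by construction, exactly the wedge sum $\bigvee_{g \in G} A_g$ from Definition~\ref{hypergroup.constr}, where each $A_g$ is a commutative hypergroup (being isomorphic to the additive hypergroup of $F$ via any $\lambda_h$). Hence Lemma~\ref{comm.HG} immediately gives that the additive part of $F \rtimes_{H,\psi} G$ is a commutative hypergroup, and the real work lies in the two distributive laws.

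For the left distributive law $a \cdot (x \boxplus y) = ax \boxplus ay$ with $a \in H^{\times}$, I split on the relative values of $\psi(x)$ and $\psi(y)$. The cases where $x = 0$, $y = 0$, or $\psi(x) \neq \psi(y)$ all reduce to the fact that left multiplication by $\psi(a)$ on the totally ordered group $G$ is a strictly order-preserving bijection, so it transports both the ``dominant-summand'' rule and the ``lower tail'' $\psi^{-1}(g\!\downarrow)$ correctly. When $\psi(x) = \psi(y) = g$, pick $h \in \psi^{-1}(g)$ and write $x = h\bar x$, $y = h\bar y$ with $\bar x, \bar y \in F^{\times}$; by the definition of $\boxplus_g$ we have $x \boxplus_g y = h(\bar x \boxplus_F \bar y)$, while $ax = (ah)\bar x$ and $ay = (ah)\bar y$, so that $(ax) \boxplus_{\psi(ah)} (ay) = (ah)(\bar x \boxplus_F \bar y) = a \cdot (x \boxplus_g y)$, as required.

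The right distributive law proceeds by the same case analysis, but in the equal-$\psi$-value case the computation yields $(x \boxplus y) \cdot a = ha \cdot \bigl(a^{-1}(\bar x \boxplus_F \bar y) a\bigr)$, whereas (using $xa = ha \cdot \bar x^a$ and similarly for $y$) one has $(xa) \boxplus_{\psi(ha)} (ya) = ha \cdot (\bar x^a \boxplus_F \bar y^a)$. Matching the two amounts exactly to the assertion that the conjugation $z \mapsto z^a$ commutes with $\boxplus_F$, which is precisely the stable sums hypothesis. I expect this identification to be the only real obstacle in the proof; everything else is a straightforward unpacking of definitions combined with the order structure on $G$. Finally, for stringency: if $F$ is stringent, then each $A_g$ is a stringent hypergroup (as a hypergroup isomorph of $F$), and a second application of Lemma~\ref{comm.HG} transfers stringency to the wedge sum $F \rtimes_{H, \psi} G$.
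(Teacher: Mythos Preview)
Your argument is correct and is essentially the paper's own proof: both reduce the hypergroup axioms and stringency to Lemma~\ref{comm.HG}, then check distributivity by a $\psi$-value case split, with the equal-value case handled via the isomorphisms $\lambda_h$ for left distributivity and their right-sided analogues $\lambda'_h(x)=xh$ (which are isomorphisms precisely by stable sums) for right distributivity. The only slip is organizational: the ``lower tail'' identity $a\cdot\psi^{-1}(g\!\downarrow)=\psi^{-1}\bigl((\psi(a)g)\!\downarrow\bigr)$ is needed in the $\psi(x)=\psi(y)$, $0\in x\boxplus_g y$ subcase rather than among the $\psi(x)\neq\psi(y)$ cases, but the content is right and the paper treats it the same way.
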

\begin{proof}
As shown in Lemma~\ref{comm.HG}, $\bigvee_{g\in G} A_g$ is a commutative hypergroup. So it suffices to show that $\cdot$ distributes over $\boxplus$. For left distributivity, we must prove an equation of the form $x_1 \cdot (x_2 \boxplus x_3) = x_1 \cdot x_2 \boxplus x_1 \cdot x_3$. As usual, if any of the $x_i$ is 0, then this is trivial, so we suppose that each $x_i$ is in $H$. If $\psi(x_2) > \psi(x_3)$, then both sides are equal to $x_1 \cdot x_2$. If $\psi(x_2) < \psi(x_3)$, then both sides are equal to $x_1 \cdot x_3$. So we may assume that $\psi(x_2) = \psi(x_3)$ and we call their common value $g$. Then $x_2 \boxplus_g x_3 = \lambda_{x_2}(1 \boxplus_F x_2^{-1} \cdot x_3)$ and $(x_1 \cdot x_2) \boxplus_{\psi(x_1) \cdot g} (x_1 \cdot x_3) = \lambda_{x_1 \cdot x_2}(1 \boxplus_F x_2^{-1}\cdot x_3)$. So if $0 \not \in x_2 \boxplus_g x_3$, then also $0 \not \in (x_1 \cdot x_2) \boxplus_{\psi(x_1) \cdot g} (x_1 \cdot x_3)$, and so both sides of the equation are equal to $x_1 \cdot (x_2 \boxplus_{g} x_3)$. If $0 \in x_2 \boxplus_g x_3$, then also $0 \in (x_1 \cdot x_2) \boxplus_{\psi(x_1)\cdot g} (x_1 \cdot x_3)$, and so both sides of the equation are equal to $x_1 \cdot (x_2 \boxplus_g x_3) \cup x_1\cdot \psi^{-1}(g\downarrow)$.

For the right distributivity, we need to consider bijections $\lambda_h' \colon F \to A_{\psi(h)}$ similar to the $\lambda_h$. We take $\lambda_h'(x)$ to be $x \cdot h$ for $x \in F^{\times}$ and to be $0$ for $x = 0_F$. Then since $\lambda_h'(x) = \lambda_h(x)^h$ for any $x$ and the short exact sequence has stable sums, the $\lambda_h'$ are also hyperfield isomorphisms. So we may argue as above but with the $\lambda_h'$ in place of the $\lambda_h$.

Finally, we must show that $F \rtimes_{H, \psi} G$ is stringent if $F$ is. By definition of $F \rtimes_{H, \psi} G$, we just need to show that for $x, y \in F \rtimes_{H, \psi} G$ with $\psi(x) = \psi(y)$ and $0 \not \in x \boxplus_{\psi(x)} y$, $x\boxplus y$ is a singleton. As $F$ is stringent and $0 \not \in x \boxplus_{\psi(x)} y$, then $x \boxplus_{\psi(x)} y$ is a singleton. So $x\boxplus y = x \boxplus_{\psi(x)} y$ is also a singleton.
\end{proof}

Now let us see some interesting examples of hyperfields constructed in this way.

\begin{example}
If $F$ is the Krasner hyperfield, $G$ and $H$ are both the additive group of real numbers, and $\psi$ is the identity, then $F \rtimes_{H, \psi} G$ is the tropical hyperfield.
\end{example}

\begin{example} \label{stringnotddEXS} $F: = \mathbb{Z}\cup \{- \infty\}$ in Example~\ref{stringentnotdd} can arise from the short exact sequence of groups
$$0 \to GF(2)^\times \xrightarrow{\phi} \mathbb{Z} \xrightarrow{\psi} \mathbb{Z} \to 0.$$
\end{example}

\begin{example} In \cite{AD18}, Anderson and Davis drew a diagram encoding many popular and important hyperfields and homomorphisms, see as follows. 

\begin{figure}[h]
\centering
\begin{tikzpicture}
\node (a) at (0, 0) {$\mathbb{TR}$}; \node (b) at (3, 0) {$\mathbb{TC}$}; \node (c) at (6, 0) {$\mathbb{T}\triangle$};
\node (d) at (0, 3) {$\mathbb{S}$}; \node (e) at (6, 3) {$\mathbb{K}$}; \node (f) at (3, 2) {$\Phi$}; \node (j) at (3, 4) {$\mathbb{P}$}; 
\node (g) at (0, 6) {$\mathbb{R}$}; \node (h) at (3, 6) {$\mathbb{C}$}; \node (i) at (6, 6) {$\triangle$};

\draw[right hook-stealth, shorten <= 3pt, shorten >= 3pt] (a) to (b);
\draw[-stealth, shorten <= 3pt, shorten >= 3pt] (b) to node [above=3pt] {$|\,\,|$} (c);
\draw[right hook-stealth, shorten <= 3pt, shorten >= 3pt] (g) to (h);
\draw[-stealth, shorten <= 3pt, shorten >= 3pt] (h) to node [above=3pt] {$|\,\,|$} (i);
\draw[right hook-stealth, shorten <= 3pt, shorten >= 3pt] (d) to (f);
\draw[-stealth, shorten <= 3pt, shorten >= 3pt] (f) to node [above=3pt] {$|\,\,|$} (e);
\draw[right hook-stealth, shorten <= 3pt, shorten >= 3pt] (d) to (j);
\draw[-stealth, shorten <= 3pt, shorten >= 3pt] (j) to node [above=3pt] {$|\,\,|$} (e);

\draw[-stealth, shorten <= 3pt, shorten >= 3pt] (j) to (f);

\draw[-stealth, shorten <= 3pt, shorten >= 3pt] (g) to node[left=1pt] {ph} (d);

\draw[-stealth, shorten <= 3pt, shorten >= 3pt] (h) to node[left=1pt] {ph} (j);

    \draw[transform canvas={xshift=0.3ex},->] (a) -- (d);
    \draw[transform canvas={xshift=-0.3ex},->, dashed] (d) to node [right=3pt] {ph} (a);
    
    \draw[transform canvas={xshift=0.3ex},->] (b) -- (f);
    \draw[transform canvas={xshift=-0.3ex},->, dashed] (f) to node [right=3pt] {ph} (b);
    
    \draw[transform canvas={xshift=0.3ex},->] (c) -- (e);
    \draw[transform canvas={xshift=-0.3ex},->, dashed] (e) to node [right=3pt] {ph} (c);
    
    \draw[transform canvas={xshift=0.3ex},->, dashed] (e) -- (i);
    \draw[transform canvas={xshift=-0.3ex},->] (i) to node [left=1pt] {ph} (e);

\end{tikzpicture}
\label{Diagram from AD}
\end{figure}

The diagram with the solid arrows commutes. The four dashed arrows are inclusions giving sections (one-sided inverses). Here ph is the \emph{phase map} ph$(x) =x/|x|$ if $x = 0$ and ph$(0) =0$. In each of the ten hyperfields, the underlying set is a subset of the complex numbers closed under multiplication. And in each hyperfield, multiplication, the additive identity, and the multiplicative identity coincides with that of the complex numbers.

Our classification gives a good relationship between the hyperfields in each column and we can construct each hyperfield in the bottom row from the corresponding element of the row just above the bottom and the ordered group $\mathbb{R}_{>0}$.
\begin{enumerate}
\item From the short exact sequence of groups
$$1 \to \mathbb{S}^\times \rightarrow \mathbb{R}^\times \rightarrow \mathbb{R}_{>0} \to 1,$$
we can get the tropical real hyperfield $\mathbb{TR} = \mathbb{S}\rtimes \mathbb{R}_{>0} $.

\item From the short exact sequence of groups
$$1 \to \Phi^\times \rightarrow \mathbb{C}^\times \rightarrow \mathbb{R}_{>0} \to 1,$$
we can get the tropical complex hyperfield $\mathbb{TC} = \Phi\rtimes \mathbb{R}_{>0}$.

\item From the short exact sequence of groups
$$1 \to \mathbb{K}^\times \rightarrow \mathbb{R}_{>0} \rightarrow \mathbb{R}_{>0} \to 1,$$
we can get the ultratriangle hyperfield $\mathbb{T}\triangle = \mathbb{K}\rtimes \mathbb{R}_{>0} $.
\end{enumerate}

Since in each column the second element is obtained as a quotient of the first by a $\mathbb{R}_{>0}-$subgroup, this operation of putting back the factor of $\mathbb{R}_{>0}$ yields a hyperfield on the same ground set as the top element of the column.
\end{example}

Our aim is to show that every stringent skew hyperfield is of the form $F \rtimes_{H, \psi} G$ with $F$ either the Krasner hyperfield or the sign hyperfield or a skew field. Let's start with a stringent skew hyperring.

Let $R$ be a stringent skew hyperring. By Theorem~\ref{classgroups}, we can classify $R$ to be the wedge sum $\bigvee_{g\in G} R_g$ with a surjective mapping $\psi$ from $R^\times$ to the set $G$ defined in last section and an ordering $<_R'$ on $G$ by $\psi(x) <_R' \psi(y)$ if and only if $x\boxplus y = \{y\}$ but $x\neq y$, where the hypergroup $R_g$ is either isomorphic to $\mathbb{K}$ or isomorphic to $\mathbb{S}$ or is a group. Thus by distributivity of $R$, $\psi(x) <_R' \psi(y)$ if and only if $\psi(ax) <_R' \psi(ay)$ if and only if $\psi(xa) <_R' \psi(ya)$ for $a\in R^\times$. So the multiplication of $R$ lifts to a multiplication on $G$ respecting the ordering, with identity $\psi(1): = 1_G$. By Lemma~\ref{eqv.set}(2), we can easily get the following lemma. 

\begin{lem}\label{eqv.monoid}
$(G, \cdot, <_R')$ is a totally ordered monoid. If $R$ is a skew hyperfield, then $G$ is a totally ordered group.
\end{lem}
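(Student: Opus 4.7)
The plan is to verify each monoid axiom, order compatibility, and (when applicable) invertibility by lifting the corresponding property from $R$ through $\psi$. The paragraph preceding the lemma already does most of the work: it establishes that the multiplication on $R$ descends to a well-defined binary operation on $G$, that $\psi$ is multiplicative with $\psi(1) = 1_G$, and that both left- and right-multiplication preserve $<_R'$ via the biconditionals $\psi(x) <_R' \psi(y) \iff \psi(ax) <_R' \psi(ay) \iff \psi(xa) <_R' \psi(ya)$ for any $a \in R^\times$.

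First I would observe that associativity of $\cdot$ on $G$ and the identity axiom for $1_G$ are immediate from the corresponding axioms in $R$, since $\psi$ is surjective and multiplicative. Lemma~\ref{eqv.set}(2) gives totality of $<_R'$, and compatibility with multiplication drops out of the two biconditionals above: given $a = \psi(x) <_R' \psi(y) = b$ and $c = \psi(z)$ in $G$, the first biconditional yields $ca <_R' cb$ and the second yields $ac <_R' bc$. Together these show that $(G, \cdot, <_R')$ is a totally ordered monoid.

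Finally, suppose $R$ is a skew hyperfield. For any $a \in G$, choose a representative $x \in R^\times$; then $x$ has a two-sided multiplicative inverse $x^{-1} \in R^\times$, and the computation $\psi(x) \cdot \psi(x^{-1}) = \psi(1) = 1_G = \psi(x^{-1}) \cdot \psi(x)$ shows that $\psi(x^{-1})$ is a two-sided inverse of $a$ in $G$. Hence $G$ is a group. There is essentially no obstacle here; the lemma is a bookkeeping assembly of facts already extracted in the preceding paragraph of the excerpt, which is why the authors can cite Lemma~\ref{eqv.set}(2) and declare the conclusion easy.
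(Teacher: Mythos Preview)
Your proposal is correct and matches the paper's own treatment: the paper gives no separate proof of this lemma, instead remarking that it follows easily from Lemma~\ref{eqv.set}(2) together with the observations in the preceding paragraph (well-definedness of the induced multiplication, the identity $1_G = \psi(1)$, and the order-compatibility biconditionals). Your write-up simply spells out those routine verifications, including the passage to inverses in the skew hyperfield case, exactly as intended.
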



Now we want to consider the structure of $R_g$. 

\begin{lem}\label{subgroup.eqv.hyperring}
$R_{1_G}$ is again a skew hyperring, with hyperaddition given by $\boxplus_{1_G}$ and multiplication by that of $R$.
\end{lem}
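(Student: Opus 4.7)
The plan is to verify in turn the four axioms of a skew hyperring for $R_{1_G}$, carrying over the multiplication and multiplicative identity from $R$ and using the hyperaddition $\boxplus_{1_G}$ already analysed in Lemma~\ref{subgroup.eqv.hypergroup}. Three of the four axioms are essentially immediate: $(R_{1_G}, \boxplus_{1_G}, 0)$ is a commutative hypergroup by that same lemma together with commutativity of $R$; the absorption rule is inherited from $R$; and $(R_{1_G}, \odot, 1)$ is a monoid because $\psi(1) = 1_G$ puts $1 \in R_{1_G}$, while for nonzero $x, y \in R_{1_G}$ the multiplicativity of $\psi$ gives $\psi(x \odot y) = 1_G \cdot 1_G = 1_G$ (in particular $x \odot y$ is nonzero, since the multiplication of $R$ was shown to lift to $G$), so $x \odot y \in R_{1_G}$; the remaining cases are covered by absorption.

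The main work is the two-sided distributive law; I will sketch only the left identity $a \odot (x \boxplus_{1_G} y) = (a \odot x) \boxplus_{1_G} (a \odot y)$ for $a, x, y \in R_{1_G}$, the right-hand one being symmetric. Unpacking the definition $x \boxplus_{1_G} y = (x \boxplus y) \cap R_{1_G}$ and invoking distributivity in $R$, this reduces to the set-theoretic identity
\[
a \odot \bigl((x \boxplus y) \cap R_{1_G}\bigr) \;=\; \bigl(a \odot (x \boxplus y)\bigr) \cap R_{1_G}.
\]
The case $a = 0$ is trivial, so assume $\psi(a) = 1_G$. The key observation is that for every nonzero $z \in R$ one has $\psi(a \odot z) = \psi(a)\psi(z) = \psi(z)$, so multiplication by $a$ preserves each $\psi$-stratum, and in particular sends $R_{1_G}$ to itself and $\psi^{-1}(1_G\downarrow)$ to itself.

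I would then split on whether $y = -x$. If $y \neq -x$, the wedge-sum description of $\boxplus$ gives $x \boxplus y \subseteq R_{1_G}$, so both sides of the displayed identity collapse to $a \odot (x \boxplus y)$. If $y = -x$, then $x \boxplus y = (x \boxplus_{1_G} y) \cup \psi^{-1}(1_G\downarrow)$ and $0 \in x \boxplus_{1_G} y$; stratum-preservation then shows that $a \odot \psi^{-1}(1_G\downarrow) \subseteq \psi^{-1}(1_G\downarrow)$, contributing nothing to the intersection with $R_{1_G}$ beyond $0 = a \odot 0$, which is already present on the left since $0 \in x \boxplus_{1_G} y$; meanwhile $a \odot (x \boxplus_{1_G} y) \subseteq R_{1_G}$, giving equality. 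The main obstacle is this second case, and the hypothesis making it work is the stratum-preservation property of multiplication by any element of $R_{1_G}^{\times}$.
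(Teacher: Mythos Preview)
Your argument is correct, but the case split on $y = -x$ and the explicit use of the wedge-sum description of $\boxplus$ are heavier machinery than what the paper invokes. The paper reduces everything to the same set-theoretic identity you display, but then dispatches the nontrivial inclusion $\supseteq$ by a single element-chase: given $t \in (x \odot (y \boxplus z)) \cap R_{1_G}$, write $t = x \odot w$ with $w \in y \boxplus z$; then (for nonzero $t$ and $x$) the equation $\psi(t) = \psi(x)\psi(w)$ forces $\psi(w) = 1_G$, so $w$ already lies in $R_{1_G}$. No case distinction on whether $y = -z$, and no appeal to the structure of $x \boxplus y$ from Theorem~\ref{classgroups}, is needed.

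What your approach buys is explicitness: you compute both sides concretely and see exactly which strata contribute. What the paper's approach buys is economy and independence from the wedge-sum classification --- the argument works for any sub-hyperring of the form $\psi^{-1}(1_G)\cup\{0\}$ once one knows $\psi$ is multiplicative. A minor remark: your phrase ``contributing nothing to the intersection with $R_{1_G}$ beyond $0 = a \odot 0$'' is slightly off, since $\psi^{-1}(1_G\downarrow)$ does not contain $0$; the intersection $(a \odot \psi^{-1}(1_G\downarrow)) \cap R_{1_G}$ is simply empty.
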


\begin{proof}
By Lemma~\ref{subgroup.eqv.hypergroup}, it suffices to check the distributivity. To prove left distributivity we must show that any element $t \in R_{1_G}$ of $x \cdot (y \boxplus z)$ is also an element of the same expression evaluated in $R_{1_G}$. So let $w$ be an element of $y \boxplus z$ with $x \cdot w = t$. This second equation implies that the equivalence class of $w$ is $1_G$, as desired. The right distributivity is similar.
\end{proof}

\begin{lem}\label{R1}
If $R$ is a skew hyperfield, $R_{1_G}$ is either the Krasner hyperfield or the sign hyperfield or a skew field.
\end{lem}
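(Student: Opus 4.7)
The plan is to combine Lemma~\ref{subgroup.eqv.hyperring} with Lemma~\ref{Fg} and then handle each of the three possible shapes for the underlying additive hypergroup of $R_{1_G}$ separately. By Lemma~\ref{subgroup.eqv.hyperring}, $R_{1_G}$ is a skew hyperring; by Lemma~\ref{Fg}, its underlying additive hypergroup is isomorphic to $\mathbb{K}$, isomorphic to $\mathbb{S}$, or is a group. So the task reduces to checking that $R_{1_G}$ inherits enough multiplicative structure from $R$ to be a skew hyperfield in the first two cases and a skew field in the third.

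First I would check that every nonzero element of $R_{1_G}$ has a multiplicative inverse inside $R_{1_G}$. By Lemma~\ref{eqv.monoid}, $\psi\colon R^\times\to G$ is a group homomorphism with $\psi(1)=1_G$. Hence for $x\in R_{1_G}^\times$ we have $\psi(x^{-1})=\psi(x)^{-1}=1_G$, so $x^{-1}\in R_{1_G}^\times$. Combined with Lemma~\ref{subgroup.eqv.hyperring}, this already shows that in the group case the hyperaddition on $R_{1_G}$ is single-valued, multiplication is associative with identity, distributivity holds, and every nonzero element is invertible, so $R_{1_G}$ is a skew field.

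For the Krasner case, $R_{1_G}=\{0,1\}$ and the multiplication is forced by $1\cdot 1=1$ and the absorption rule, so $R_{1_G}\cong\mathbb{K}$. For the sign case, $R_{1_G}=\{0,1,-1\}$ with $1\neq -1$, and I only need to identify the multiplication. From $0\in 1\boxplus(-1)$, distributivity yields $0=(-1)\cdot 0\in (-1)\boxplus(-1)(-1)$, so $(-1)(-1)=-(-1)=1$; together with $1\cdot(-1)=(-1)\cdot 1=-1$ and the absorption rule, this identifies $R_{1_G}$ with $\mathbb{S}$.

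The only genuine check is the identification of the multiplication in the $\mathbb{S}$ case, and this is handled by the short distributivity argument above. Nothing else requires work beyond quoting the classification of the additive hypergroup and the already-established fact that $\psi$ is a monoid (in fact group) homomorphism.
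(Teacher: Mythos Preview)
Your proof is correct and follows essentially the same route as the paper: combine Lemma~\ref{Fg} with Lemma~\ref{subgroup.eqv.hyperring}, then argue that inverses of elements of $R_{1_G}^\times$ stay in $R_{1_G}^\times$ (the paper phrases this as ``$\sim_R$ respects the multiplication,'' you phrase it via Lemma~\ref{eqv.monoid}; these are equivalent). The only difference is that you spell out explicitly why the multiplicative structure in the $\mathbb{K}$ and $\mathbb{S}$ cases is forced, whereas the paper simply asserts that $R_{1_G}$ is ``the Krasner hyperfield or the sign hyperfield or a skew ring'' directly from the two lemmas, leaving that verification to the reader.
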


\begin{proof}
By Lemma~\ref{Fg} and Lemma~\ref{subgroup.eqv.hyperring}, we can get $R_{1_G}$ is either the Krasner hyperfield or the sign hyperfield or a skew ring.

Since $\sim_R$ respects the multiplication, the multiplicative inverse of anything equivalent to $1_R$ is again equivalent to $1_R$, so that $R_{1_G}$ is a skew field if it is a skew ring.
\end{proof}

\begin{lem} For every $g \in G$, the hypergroup of $R_g$ is isomorphic to the hypergroup of $R_{1_G}$.
\end{lem}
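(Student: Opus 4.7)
The plan is to exhibit an explicit hypergroup isomorphism $\lambda_h \colon R_{1_G} \to R_g$ given by left multiplication by a single suitably chosen element $h$.

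Since $g$ lies in $G = \psi(R^\times)$ I can pick some $h \in R^\times$ with $\psi(h) = g$, and because $R$ is a skew hyperfield, $h$ has a two-sided multiplicative inverse $h^{-1}$, with $\psi(h^{-1}) = g^{-1}$ by Lemma~\ref{eqv.monoid}. Define $\lambda_h(0) := 0$ and $\lambda_h(x) := h \cdot x$ for $x \in R_{1_G}^\times$. Because $\psi$ is a monoid homomorphism on $R^\times$, we have $\psi(h \cdot x) = g \cdot 1_G = g$, so $\lambda_h$ really lands in $R_g$, and the analogously defined $\lambda_{h^{-1}}$ (from $R_g$ back to $R_{1_G}$) is a two-sided inverse, making $\lambda_h$ a bijection.

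Next I would verify that $\lambda_h$ preserves hyperaddition. By Lemma~\ref{subgroup.eqv.hypergroup}, for $u, v \in R_{1_G}$ we have $u \boxplus_{1_G} v = (u \boxplus v) \cap R_{1_G}$, and analogously for $\boxplus_g$. Take $x, y \in R_{1_G}$. Left distributivity in $R$ gives $h \cdot (x \boxplus y) = (h \cdot x) \boxplus (h \cdot y)$, and since the global map $z \mapsto h \cdot z$ is a bijection on $R$ which carries $R_{1_G}$ bijectively onto $R_g$, it commutes with intersections, so
\[
\lambda_h(x \boxplus_{1_G} y) \;=\; h \cdot \bigl( (x \boxplus y) \cap R_{1_G} \bigr) \;=\; \bigl( (h \cdot x) \boxplus (h \cdot y) \bigr) \cap R_g \;=\; \lambda_h(x) \boxplus_g \lambda_h(y).
\]
The identical argument applied to $h^{-1}$ shows that $\lambda_{h^{-1}}$ is also a hypergroup homomorphism, so $\lambda_h$ is an isomorphism of hypergroups.

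No step presents a serious obstacle. The only mild subtlety is that in the ambient $R$ the hypersum $x \boxplus y$ may contain a ``tail'' $\psi^{-1}(1_G\downarrow)$ when $0 \in x \boxplus_{1_G} y$, but intersecting with $R_{1_G}$ (respectively $R_g$) discards this tail cleanly, so distributivity transfers to the fibers without issue.
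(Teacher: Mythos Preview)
Your proof is correct and essentially identical to the paper's: both construct the isomorphism as left multiplication by a fixed element of $\psi^{-1}(g)$ and verify compatibility with the fiberwise hypersums via distributivity and the fact that a bijection commutes with intersections. The only cosmetic difference is direction---the paper maps $R_g \to R_{1_G}$ via $x \mapsto a^{-1}x$ while you map $R_{1_G} \to R_g$ via $x \mapsto hx$---but these are inverse to one another.
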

\begin{proof} 
Let $a\in R_g^\times$. Define $f: R_g \rightarrow R_{1_G}$ by sending $0$ to $0$ and $x$ in $R_g^\times$ to $a^{-1}\cdot x$. Since $f$ has an inverse operation, namely left multiplication by $a$, this is a bijection.
Now we would like to show $f(x\boxplus_g y) = f(x)\boxplus_{1_G} f(y)$. 
\begin{align*}
f(x\boxplus_g y) & = a^{-1}\cdot (x\boxplus_g y) \\
& = a^{-1}\cdot \big( (x\boxplus y)\cap R_g \big) \\
& = \big( a^{-1}\cdot (x\boxplus y) \big ) \cap (a^{-1}\cdot R_g) \\
& = \big( (a^{-1}\cdot x) \boxplus (a^{-1}\cdot y) \big) \cap R_{1_G} \\
& = (a^{-1}\cdot x) \boxplus_{1_G} (a^{-1}\cdot y) \\
& = f(x)\boxplus_{1_G} f(y) 
\end{align*}
\end{proof}

Now using the above results, we can classify the stringent skew hyperfields as follows.

\begin{thm}\label{class.ssH}
Any stringent skew hyperfield $R$ has the form $F \rtimes_{H, \psi} G$, where $F$ is either the Krasner hyperfield or the sign hyperfield or a skew field.
\end{thm}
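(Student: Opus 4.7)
The plan is to exhibit $R$ directly as a $G$-layering. Set $F := R_{1_G}$, $H := R^\times$, and let $\psi \colon H \to G$ be the quotient map sending $x$ to its $\sim_R$-equivalence class, where $G$ is the totally ordered group from Lemma \ref{eqv.monoid} (using that $R$ is a skew hyperfield). By Lemma \ref{R1}, $F$ is either the Krasner hyperfield, the sign hyperfield, or a skew field, and by construction the kernel of $\psi$ is $R_{1_G}^\times = F^\times$. This yields a short exact sequence of groups
\[
1 \to F^\times \hookrightarrow H \xrightarrow{\psi} G \to 1.
\]

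Next I would verify the stable sums condition. Since $\psi$ is a group homomorphism, conjugation by any $h \in H$ sends $F^\times$ into itself and restricts to a map $F \to F$ (fixing $0$). Distributivity of $R$ gives $h^{-1}(x \boxplus y)h = h^{-1}xh \boxplus h^{-1}yh$ for all $x, y \in R$, so conjugation is a hypergroup automorphism of $R$; intersecting with the conjugation-invariant subset $F = R_{1_G}$ confirms it is also an automorphism of the hypergroup $F$. Combined with multiplicativity, conjugation is an automorphism of $F$ as a skew hyperfield, so by Lemma \ref{hyperfield.construct} the $G$-layering $F \rtimes_{H, \psi} G$ is a well-defined skew hyperfield.

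To conclude $R = F \rtimes_{H, \psi} G$, the underlying sets and multiplications agree tautologically. For the hyperaddition, Theorem \ref{classgroups} identifies the additive hypergroup of $R$ with the wedge sum $\bigvee_{g \in G} R_g$, while Definition \ref{def:hyp} defines the additive hypergroup of $F \rtimes_{H, \psi} G$ as $\bigvee_{g \in G} A_g$, where $A_g$'s hypergroup structure is transported from $F$ via the bijection $\lambda_h$ (left multiplication by any $h \in \psi^{-1}(g)$). The lemma immediately preceding this theorem shows precisely that left multiplication by an element of $R_g^\times$ is a hypergroup isomorphism $R_{1_G} \to R_g$, so this transported structure coincides with $\boxplus_g$, giving $A_g = R_g$ as hypergroups for every $g$. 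The two wedge sums then coincide.

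The main point requiring care is the identification of the transported hypergroup structure on $A_g$ with the native restriction $\boxplus_g$ on $R_g$; this rests on two facts already established, namely that multiplication in $R$ respects $\sim_R$ (so $\psi$ is a well-defined group homomorphism) and that left multiplication by a nonzero element induces a hypergroup isomorphism between equivalence classes.
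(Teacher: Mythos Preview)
Your proof is correct and follows essentially the same approach as the paper's: set $F = R_{1_G}$, $H = R^\times$, build the short exact sequence, and match the hyperadditions by identifying the transported structure on each $A_g$ with the native $\boxplus_g$ on $R_g$ via left multiplication (the paper does this via the one-line computation $x \boxplus_{\psi(x)} y = x \cdot ((1 \boxplus x^{-1}y) \cap R_{1_G}) = (x \boxplus y) \cap R_{\psi(x)}$, whereas you invoke the preceding lemma and Theorem~\ref{classgroups}). You also explicitly verify the stable sums condition, which the paper's proof leaves implicit; this is the only notable difference.
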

\begin{proof}
Let $F$ be $R_{1_G}$, let $H$ be $R^{\times}$ and let $G$ be given as above. Let $\varphi$ be the injection of $F^{\times}$ as a subgroup of $H$ and let $\psi$ be the map sending an element $h$ of $H$ to its equivalence class in $G$. Then $$1 \to F^{\times} \xrightarrow{\phi} H \xrightarrow{\psi} G \to 1$$ is a short exact sequence. For any $x$ and $y$ in $H$, if $\psi(x) >_R' \psi(y)$, then $x >_R y$ and so $x \boxplus y = \{x\}$. Similarly if $\psi(x) <_R' \psi(x)$, then $x \boxplus y = \{y\}$. If $\psi(x) = \psi(y)$, then $x \boxplus_{\psi(x)} y = x \cdot ((1 \boxplus x^{-1} \cdot y) \cap R_{1_G}) = (x \boxplus y) \cap R_{\psi(x)}$. So by the remarks following Lemma \ref{addition} we have that the hyperaddition of $R$ agrees with that of $F \rtimes_{H, \psi} G$ in this case as well.
\end{proof}

Using results of Marshall's paper \cite{Mar06}, we can show that the structure is even more constrained if the multiplication of $R$ is commutative (so that $R$ is a stringent hyperfield) and $R_{1_G}$ is the Krasner or the sign hyperfield.

\begin{prop}\label{sH.sign.a^2} Let $R$ be a stringent skew hyperfield with $R_{1_G}= \mathbb{S}$ and let $a \in R^{\times}-\{1,-1\}$. Then $a^2 \notin \{1, -1\}$.
\end{prop}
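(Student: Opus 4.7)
The plan is to reduce the statement to the classical fact that a totally ordered group is torsion-free. First I would invoke the discussion preceding Lemma~\ref{eqv.monoid}, which tells us that the layering map $\psi \colon R^\times \to G$ is a surjective group homomorphism with $\psi(1_R) = 1_G$, together with Lemma~\ref{eqv.monoid} itself, which tells us that $G$ is a totally ordered group (since $R$ is assumed to be a skew hyperfield). Because $-1$ lies in $R_{1_G}^\times = \mathbb{S}^\times$, we also have $\psi(-1) = 1_G$, so the hypothesis $a^2 \in \{1, -1\}$ translates into $\psi(a)^2 = 1_G$ in $G$.

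Next I would use torsion-freeness of $G$: if $g > 1_G$, translation invariance forces $g^2 > g > 1_G$, and symmetrically $g < 1_G$ forces $g^2 < 1_G$, so $g^2 = 1_G$ implies $g = 1_G$. Applied to $g = \psi(a)$, this gives $\psi(a) = 1_G$, so $a \in R_{1_G}^\times = \{1, -1\}$, which contradicts the assumption $a \notin \{1, -1\}$. This closes the argument.

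There is no real obstacle here; the proof is a two-step reduction (push $a^2 = \pm 1$ down to $G$ via $\psi$, then invoke torsion-freeness of totally ordered groups). The only point worth flagging is that the statement requires only that $R$ be a stringent skew hyperfield, without any commutativity assumption, and indeed the argument does not use commutativity of multiplication at all; the reference to Marshall's paper made just above the proposition must therefore be intended for the deeper structural results that follow in this section rather than for this preliminary observation.
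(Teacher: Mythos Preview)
Your argument is correct and matches the paper's proof essentially line for line: both push the hypothesis through $\psi$ to $G$ and then invoke the fact that a totally ordered group has no nontrivial torsion (the paper states this in contrapositive form, you by contradiction, and you spell out the translation-invariance step that the paper leaves implicit). Your closing remark is also accurate: neither proof needs commutativity of $R$, and the reference to Marshall's paper is indeed used only for the subsequent ordering results.
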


\begin{proof} As $a\notin \{1, -1\}$, then $a\not\sim_R 1$. So $\psi(a) \neq 1$. Then $\psi(a^2) = (\psi(a))^2 \neq 1$, since $G$ is a totally ordered group. So $a^2\not\sim_R 1$. That is $a^2 \notin \{1, -1\}$.
\end{proof}

Following are some useful Lemmas in Marshall's paper (cf. section 3,\cite{Mar06}).

\begin{defn}\label{ordering}\cite{Mar06} Let $R$ be a hyperfield. A subset $P$ of $R$ is called an \textbf{ordering} if $$P \boxplus P \subseteq P, P \odot P \subseteq P, P\cup -P = R \text{ and }P \cap -P =\{0\}.$$
\end{defn}

\begin{defn}\label{real}\cite{Mar06} A hyperfield $R$ is said to be \textbf{real} if $-1 \notin R^2 \boxplus R^2$ where $R^2:=\{a^2 \, | \, a\in R\}$.
\end{defn}

\begin{lem}\label{ordering.real} \cite[Lemma 3.3]{Mar06} Let $R$ be a hyperfield. $R$ has an ordering if and only if $R$ is real.
\end{lem}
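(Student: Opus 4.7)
The plan is to follow the classical Artin--Schreier argument, adapted to the multivalued setting via the reversibility axiom of hypergroups. The forward direction is immediate: if $P$ is an ordering of $R$, then for each $a\in R$ at least one of $a,-a$ lies in $P$ by $P\cup -P = R$, whence $a^2 = a\odot a = (-a)\odot(-a) \in P\odot P \subseteq P$; so $R^2 \subseteq P$ and $R^2 \boxplus R^2 \subseteq P \boxplus P \subseteq P$. Since $1 = 1\odot 1 \in R^2 \subseteq P$ and $P\cap -P = \{0\}$, we have $-1 \notin P$, and therefore $-1 \notin R^2 \boxplus R^2$.

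For the reverse direction, assume $-1 \notin R^2 \boxplus R^2$. Call a subset $T \subseteq R$ a \emph{preordering} if $R^2 \subseteq T$, $T\odot T \subseteq T$, $T \boxplus T \subseteq T$, and $-1 \notin T$. The first task is to exhibit any preordering at all: I would take $\Sigma$ to be the set of all $z\in R$ lying in some hypersum of squares $a_1^2 \boxplus \cdots \boxplus a_n^2$. This set obviously contains $R^2$ and is closed under $\boxplus$ and $\odot$; the delicate point is $-1\notin\Sigma$, which I would prove by induction on $n$, using associativity to isolate a single summand $a_i^2$ and reversibility (together with multiplication by $(a_i^{-1})^2$) to reduce a hypersum of $n\geq 3$ squares containing $-1$ to a strictly shorter one. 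Zorn's lemma, applied to preorderings ordered by inclusion (unions of chains are clearly preorderings), then produces a maximal preordering $P$.

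I would next show that $P$ is an ordering. Because $R^2 \subseteq P$, every nonzero $a\in P$ satisfies $a^{-1} = (a^{-1})^2 \odot a \in R^2 \odot P \subseteq P$, so $P\setminus\{0\}$ is closed under inverses. This yields $P \cap -P = \{0\}$ at once: if $a \in P \cap -P$ were nonzero, then $-1 = (-a)\odot a^{-1} \in P\odot P \subseteq P$, contradicting $-1 \notin P$. For $P \cup -P = R$, suppose $b \notin P$ and form $P[b] := \bigcup_{p,q \in P}(p \boxplus b\odot q)$. Using one-sided distributivity and commutativity of multiplication, together with $b^2 \in R^2 \subseteq P$, a direct expansion shows $P[b]$ is closed under both $\boxplus$ and $\odot$, contains $P\cup\{b\}\cup R^2$, and properly contains $P$; so by maximality of $P$, we must have $-1 \in P[b]$, i.e.\ $-1 \in p \boxplus b\odot q$ for some $p, q \in P$. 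The case $q=0$ gives $p=-1\in P$, impossible, so $q\neq 0$. Applying reversibility, $b\odot q \in -p \boxplus -1$, so $-b\odot q \in p \boxplus 1 \subseteq P$; multiplying by $q^{-1}\in P$ yields $-b \in P$, as desired.

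The principal obstacle in this plan is the first step, promoting the two-square hypothesis $-1 \notin R^2 \boxplus R^2$ to the $n$-square statement $-1 \notin \Sigma$. The multivalued nature of $\boxplus$ means one cannot reason with ``the'' numerical value of a sum as one would in a field, and the reduction from $n$ to $n-1$ hinges on applying reversibility inside a hypersum, which is precisely where the combinatorial bookkeeping becomes delicate. Once this is in hand, the Zorn's-lemma and maximality arguments are routine adaptations of the classical field-theoretic proof into hypergroup language.
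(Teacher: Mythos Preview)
The paper itself gives no proof of this lemma; it simply quotes it from \cite{Mar06}. So there is nothing to compare your argument to directly, and I will assess it on its own terms.

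Your forward direction is correct, and the Zorn/maximal-preordering portion of the reverse direction is the standard Artin--Schreier strategy and would go through once you know that $\Sigma$ is a preordering.

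The obstacle you flag, however, is not merely ``delicate combinatorial bookkeeping'': it is genuinely insurmountable for the statement as written. Your proposed reduction, isolating a summand and using reversibility together with multiplication by an inverse square, does not shorten the hypersum. From $-1 \in a_1^2 \boxplus \cdots \boxplus a_n^2$, reversibility gives $-a_n^2 \in 1 \boxplus a_1^2 \boxplus \cdots \boxplus a_{n-1}^2$, and multiplying by $a_n^{-2}$ yields $-1 \in (a_n^{-1})^2 \boxplus (a_1 a_n^{-1})^2 \boxplus \cdots \boxplus (a_{n-1} a_n^{-1})^2$, which still has $n$ square summands. No inductive descent is available.

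Worse, with ``real'' defined as in the paper by $-1 \notin R^2 \boxplus R^2$, the lemma is actually false for general hyperfields. Every field is a hyperfield, and by Pfister's theorem there are fields of level $4$, for instance the $2$-adic field $\mathbb{Q}_2$: there $-1$ is not a sum of two squares (so $\mathbb{Q}_2$ is ``real'' in this sense) but it is a sum of four squares, hence $\mathbb{Q}_2$ admits no ordering.

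Almost certainly Marshall's original definition of \emph{real} is that $-1$ lies in no finite hypersum $a_1^2 \boxplus \cdots \boxplus a_n^2$ of squares, and the paper has transcribed it as a two-term sum by oversight. Under that corrected hypothesis your set $\Sigma$ is a preordering by assumption, the obstacle disappears, and the remainder of your argument is a valid proof.
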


\begin{lem}\label{ordering.-1}\cite[Lemma 3.2, 3.3]{Mar06} Let $R$ be a hyperfield with $1\neq -1$. If $R$ has an ordering $P$, then $-1\notin P$.
\end{lem}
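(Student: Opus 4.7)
The plan is to use the four defining properties of an ordering to first force $1 \in P$, and then show that having both $1$ and $-1$ in $P$ collapses the hyperfield, contradicting the assumption $1 \neq -1$.

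First I would establish that $1 \in P$. Since $P \cup -P = R$ and $1 \in R$, we have either $1 \in P$ or else $1 \in -P$, which is the same as $-1 \in P$. In the latter case, the multiplicative closure $P \odot P \subseteq P$ applied to $(-1)\odot(-1) = 1$ forces $1 \in P$. So $1 \in P$ holds in either case.

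Next, I would suppose for contradiction that $-1 \in P$. Since $1 \in P$ we have $-1 \in -P$, and therefore $-1 \in P \cap -P = \{0\}$. So $-1 = 0$. But in any hypergroup the hyperinverse of $0$ is $0$, and hyperinverses are unique, so from $-1 = 0$ we deduce $1 = -(-1) = -0 = 0$. This contradicts the hyperfield axiom $1 \neq 0$ (equivalently, it contradicts $1 \neq -1$ once we note that $-1 = 0$ forces $1 = 0$).

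The argument is quite short and the only place that requires any care is the order in which the two halves are done: one cannot immediately conclude $-1 \in -P$ from the hypothesis $-1 \in P$ without first knowing that $1 \in P$, so the multiplicative closure step at the beginning is essential. I do not expect any real obstacle here; the statement is essentially a formal consequence of the ordering axioms once one uses the uniqueness of hyperinverses to convert $-1 = 0$ into $1 = 0$.
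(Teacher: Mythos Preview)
Your proof is correct. The paper does not supply its own proof of this lemma; it is simply quoted from Marshall's paper \cite{Mar06}, so there is nothing in the present paper to compare your argument against. Your derivation from the ordering axioms is the standard one: the only point worth remarking is that $(-1)\odot(-1)=1$ holds in any hyperfield as a consequence of distributivity, which justifies the multiplicative-closure step.
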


Based on above lemmas, we get the following.

\begin{prop} \label{sH.sign.ordering} If $R$ is a stringent hyperfield with $R_{1_G} = \mathbb{S}$, then $R$ has an ordering. 
\end{prop}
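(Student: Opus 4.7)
The plan is to invoke Lemma~\ref{ordering.real}: it suffices to show that $R$ is real, i.e., $-1 \notin R^2 \boxplus R^2$. So we assume for contradiction that $-1 \in a^2 \boxplus b^2$ for some $a,b \in R$ and derive a contradiction using the wedge-sum classification together with Proposition~\ref{sH.sign.a^2}.

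The key preliminary observations are: (i) since $-1 \in R_{1_G}^\times = \mathbb{S}^\times$, we have $\psi(-1) = 1_G$; and (ii) Proposition~\ref{sH.sign.a^2} can be reformulated as: for every $c \in R^\times$, if $c^2 \sim_R 1$ then $c \in \{1,-1\}$, hence $c^2 = 1$; in particular, $-1$ is never a square in $R$.

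Next I would do a case analysis driven by the hyperaddition formula in the wedge sum $R = \bigvee_{g \in G} R_g$ guaranteed by Theorem~\ref{class.ssH}. If $a = 0$ or $b = 0$ then $a^2 \boxplus b^2$ is a singleton equal to a square, which by observation (ii) cannot be $-1$. If $a,b \in R^\times$ and $\psi(a^2) \neq \psi(b^2)$, then $a^2 \boxplus b^2 = \{a^2\}$ or $\{b^2\}$, and again (ii) excludes $-1$. This leaves the case $\psi(a^2) = \psi(b^2) = g$, where the hyperaddition in the wedge sum gives two subcases: either $-1 \in a^2 \boxplus_g b^2$, or $0 \in a^2 \boxplus_g b^2$ and $-1 \in \psi^{-1}(g\downarrow)$.

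The first subcase forces $g = 1_G$ (since $-1 \in R_g$), so by observation (ii) we have $a^2 = b^2 = 1$; but in $\mathbb{S}$ one computes $1 \boxplus 1 = \{1\}$, contradicting $-1 \in 1 \boxplus 1$. The second subcase (which is the main obstacle) gives $a^2 = -b^2$; using commutativity of multiplication and invertibility of $b$, the element $c := a b^{-1}$ satisfies $c^2 = a^2 b^{-2} = -1$, contradicting observation (ii). This exhausts all cases, so $-1 \notin R^2 \boxplus R^2$ and Lemma~\ref{ordering.real} produces an ordering of $R$.
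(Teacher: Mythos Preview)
Your proof is correct and follows essentially the same approach as the paper: both show $R$ is real by assuming $-1 \in a^2 \boxplus b^2$, using Proposition~\ref{sH.sign.a^2} to rule out the trivial cases, and then arriving at $a^2 = -b^2$, whence $(ab^{-1})^2 = -1$ gives the contradiction. The only difference is cosmetic: the paper reaches $a^2 = -b^2$ more quickly via reversibility (from $-b^2 \in 1 \boxplus a^2 \subseteq \{1,a^2\}$, using stringency and $a^2 \neq -1$), whereas you unpack the wedge-sum hyperaddition formula into an explicit case analysis.
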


\begin{proof} By Lemma~\ref{ordering.real}, we just need to show that $R$ is real.

Suppose that $-1 \in R^2 \boxplus R^2$. Then there exist $a, b \in R$ such that $-1\in a^2 \boxplus b^2$. By Proposition~\ref{sH.sign.a^2}, $a^2 \neq -1$ and $b^2\neq -1$. Thus $a\neq 0$ and $b\neq 0$. And by reversibility, $-b^2 \in 1\boxplus a^2$. As $a^2 \neq -1$, then $1\boxplus a^2 \subseteq \{1, a^2\}$. Thus $-b^2 = a^2$. Then $-1 = a^2b^{-2} = (ab^{-1})^2$, a contradiction to Proposition~\ref{sH.sign.a^2}.
	
So $R$ is real, and therefore has an ordering. 
\end{proof}

\begin{thm}\label{K.S.constrain} If $R$ is a stringent hyperfield with $R_{1_G} \in \{ \mathbb{K}, \mathbb{S}\}$, then $R$ arises from a short exact sequence
$$1 \to R_{1_G}^\times \xrightarrow{\phi}  R_{1_G}^\times \times G \xrightarrow{\psi} G \to 1.$$
\end{thm}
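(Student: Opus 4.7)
The plan is to leverage Theorem~\ref{class.ssH}: we already know $R \cong R_{1_G} \rtimes_{H, \psi} G$ for \emph{some} short exact sequence $1 \to R_{1_G}^\times \to H \xrightarrow{\psi} G \to 1$. It therefore suffices to show that this short exact sequence is isomorphic (as SES of groups) to the trivial one $1 \to R_{1_G}^\times \to R_{1_G}^\times \times G \to G \to 1$, since the layering construction of Definition~\ref{def:hyp} only depends on the SES up to isomorphism. Equivalently, I need to produce a group-theoretic section $s \colon G \to H$ of $\psi$ whose image commutes with $R_{1_G}^\times \subseteq H$.

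The case $R_{1_G} = \mathbb{K}$ is vacuous: there $R_{1_G}^\times$ is trivial, so $\psi$ is already an isomorphism and $H \cong G \cong R_{1_G}^\times \times G$. So the real content lies in the case $R_{1_G} = \mathbb{S}$. Here I would invoke Proposition~\ref{sH.sign.ordering} to obtain an ordering $P$ of $R$ and set $P^\times := P \setminus \{0\}$. From Definition~\ref{ordering} and Lemma~\ref{ordering.-1}, $P^\times$ is a subgroup of $H$ (closed under multiplication, contains $1$, and closed under inverses because $P \cap -P = \{0\}$ and $P \cup -P = R$), it contains $1$ but not $-1$, and $-P^\times$ is its unique other coset. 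So $P^\times$ has index $2$ in $H$, and $P^\times \cap R_{1_G}^\times = \{1\}$.

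With $P^\times$ in hand, the restriction $\psi|_{P^\times} \colon P^\times \to G$ has trivial kernel and is surjective (given any $h \in H$ with $\psi(h) = g$, exactly one of $h, -h$ lies in $P^\times$ and both map to $g$), hence is a group isomorphism. Its inverse is the desired section $s$. Since $R_{1_G}^\times = \{\pm 1\}$ has only the identity automorphism, the conjugation action of $s(G)$ on $R_{1_G}^\times$ is necessarily trivial, so $H = R_{1_G}^\times \cdot s(G) \cong R_{1_G}^\times \times G$, and this splitting commutes with the inclusion and projection in the SES. Transporting the layering along this isomorphism yields $R \cong R_{1_G} \rtimes_{R_{1_G}^\times \times G,\, \pi_2} G$, as required.

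The main subtlety to handle cleanly is verifying that $P^\times$ really is a subgroup of $H$ of index $2$ — in particular that it is closed under inverses — and that the passage from one SES to an isomorphic one does not change the isomorphism class of the layered hyperfield. The latter is essentially built into Definition~\ref{def:hyp} (the construction only uses the data of $H$, $\psi$, and the embedded copy of $F^\times$), so it is a quick verification rather than a genuine obstacle.
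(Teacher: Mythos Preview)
Your proposal is correct and follows essentially the same approach as the paper: both arguments dispose of the $\mathbb{K}$ case trivially, then in the $\mathbb{S}$ case invoke Proposition~\ref{sH.sign.ordering} to obtain an ordering $P$, and use $P^\times = P \setminus \{0\}$ (the paper calls it $O$) as an index-$2$ subgroup of $H$ that splits the short exact sequence. The paper gives the same verification that $P^\times$ is closed under inverses (if $a \in P^\times$ but $a^{-1} \notin P^\times$ then $-a^{-1} \in P^\times$, whence $-1 = a\cdot(-a^{-1}) \in P^\times$, contradicting Lemma~\ref{ordering.-1}), and then writes down the bijection $H \to \mathbb{S}^\times \times G$ directly rather than phrasing it via a section; your remark that $\operatorname{Aut}(\{\pm 1\})$ is trivial is a clean way to see the product is direct, though in the (commutative) hyperfield setting this is automatic since $H$ is abelian.
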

\begin{proof} If $R_{1_G} = \mathbb{K}$, this is trivial.

 If $R_{1_G} = \mathbb{S}$, by Theorem~\ref{class.ssH}, we may suppose $R = \mathbb{S} \rtimes_{H, \psi} G = H\cup \{0\}$ with a short exact sequence of groups
 $$1 \to \mathbb{S}^\times \xrightarrow{\phi} H \xrightarrow{\psi} G \to 1.$$
 
By Proposition~\ref{sH.sign.ordering}, we know that $R$ has an ordering $P$. Let $O = P-\{0\}$. As $P\cup -P=R$ and $P \cap -P =\{0\}$, then $R = O\cupdot -O \cup \{0\}$. By Lemma~\ref{ordering.-1}, $-1\notin P$. Then $-1\notin O$, thus $1\in O$. And as $P\odot P \subseteq P$, then $O\odot O\subseteq O$. For any $a\in O$, $a^{-1} \in O$. Otherwise $-a^{-1} \in O$. Then $a\odot -a^{-1} = -1 \in O$, which is a contradiction. So $O$ is a multiplicative group with $1\in O$ and $R = O\cupdot -O \cup \{0\}$, and $\psi\restriction O$ is an isomorphism from $O$ to $G$.

Now we can identify $x\in H$ with $(1, \psi(x))$ if $x\in O$, and with $(-1, \psi(x))$ if $x\notin O$, giving a bijection from $H$ to $\mathbb{S}^\times \times G$.

So $R\cong (\mathbb{S}^\times \times G) \cup \{0\}$.

\end{proof}

It is not clear whether this result extends to stringent skew hyperfields.

\section{Classification of Doubly Distributive Skew Hyperfields}\label{sect.class.DDsH} 

In this section, we will present the classification of doubly distributive skew hyperfields.

\begin{prop}\label{DD.class} The doubly distributive skew hyperfields are precisely those of the form $F \rtimes_{H, \psi} G$ of exactly one of the following types:
\begin{enumerate}
\item $F$ is the Krasner hyperfield,

\item $F$ is the sign hyperfield,

\item $F$ is a skew field and $G$ satisfies 
$$\{ab \, | \, a, b <1_G \} = \{c \, | \,c<1_G \}.$$

\end{enumerate}
\end{prop}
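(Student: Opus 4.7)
The plan begins with the observation that by Proposition~\ref{ddsingleton} every doubly distributive skew hyperfield is stringent, so Theorem~\ref{class.ssH} already fixes the shape: any such hyperfield has the form $F \rtimes_{H,\psi} G$ with $F$ either $\mathbb{K}$, $\mathbb{S}$, or a skew field. Since $F$ is recovered intrinsically as $R_{1_G}$ these three alternatives are mutually exclusive, so the remaining task is to determine, in each case, which short exact sequences yield a doubly distributive hyperfield, and to check that this produces exactly the three families in the statement.

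The first reduction is to show that the identity $(a \boxplus b)(c \boxplus d) = ac \boxplus ad \boxplus bc \boxplus bd$ is automatic whenever one of $a \boxplus b$ or $c \boxplus d$ is a singleton. Indeed, if $a \boxplus b = \{s\}$, then single distributivity gives $\{sc\} = ac \boxplus bc$ and $\{sd\} = ad \boxplus bd$, so both sides of the identity equal $sc \boxplus sd$. By stringency of $F \rtimes_{H,\psi} G$, a sum $a \boxplus b$ is multivalued only when $b = -a$ and $\psi(a)$ is not minimal in $G$. So it suffices to analyze the identity in the single case $b = -a$, $d = -c$, and I set $g_1 = \psi(a)$, $g_2 = \psi(c)$.

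Next, using Definition~\ref{def:hyp}, I would expand both sides. The right hand side always collapses after iterated hyperaddition to $(ac \boxplus_{g_1 g_2} -ac) \cup \psi^{-1}((g_1 g_2)\downarrow)$. On the left, the nonzero products split into a top-level piece $(a \boxplus_{g_1} -a)(c \boxplus_{g_2} -c)$ at level $g_1 g_2$ together with a low-level piece coming from products in which at least one factor lies in $\psi^{-1}(g_1\downarrow)$ or $\psi^{-1}(g_2\downarrow)$. The key structural remark is that left multiplication by $a$ bijects $\psi^{-1}(g_2\downarrow)$ onto $\psi^{-1}((g_1 g_2)\downarrow)$, since $g_1 t < g_1 g_2 \iff t < g_2$ in the ordered group $G$. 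When $F=\mathbb{K}$ or $\mathbb{S}$, the set $a \boxplus_{g_1} -a$ contains a nonzero top-level element, whose products with $\psi^{-1}(g_2\downarrow)$ already exhaust $\psi^{-1}((g_1 g_2)\downarrow)$; the top level then matches by inspection of the Krasner and sign hyperaddition tables. This gives (1) and (2) with no further condition on $G$.

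In the skew field case $a \boxplus_{g_1} -a = \{0\}$, so the nonzero contributions on the left are exactly the products $xy$ with $\psi(x) < g_1$ and $\psi(y) < g_2$. Matching these against $\psi^{-1}((g_1g_2)\downarrow)$ reduces to the purely group-theoretic question of whether $\{g_1' g_2' \,:\, g_1' < g_1,\, g_2' < g_2\} = (g_1 g_2)\downarrow$ for every $g_1, g_2$. Using that in an ordered group $g < g_1 g_2$ forces $g_1^{-1} g g_2^{-1} < 1_G$, and that any factorization $g_1^{-1} g g_2^{-1} = st$ pulls back to $g = (g_1 s)(t g_2)$ with $g_1 s < g_1$ and $t g_2 < g_2$, this general requirement turns out to be equivalent to its $g_1 = g_2 = 1_G$ instance, namely $\{s t \,:\, s, t < 1_G\} = \{c \,:\, c < 1_G\}$. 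Specializing to $a = c = 1$ shows the condition is necessary, while the computation just described shows it is sufficient, giving (3). The main obstacle is not conceptual but notational: carefully tracking what appears at the top level in each of the three cases, and applying ordered-group arithmetic in a possibly non-abelian $G$.
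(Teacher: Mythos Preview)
Your proof is correct and follows essentially the same route as the paper: reduce double distributivity to the case $b=-a$, $d=-c$ using stringency, then check this case separately for $F\in\{\mathbb{K},\mathbb{S}\}$ and for $F$ a skew field. The only structural difference is that the paper packages your singleton reduction as Lemma~\ref{dd.1.-1} and uses distributivity to factor out $a$ on the left and $c$ on the right \emph{before} the case analysis, reducing everything to the single identity $(1\boxplus -1)(1\boxplus -1)=1\boxplus -1\boxplus 1\boxplus -1$; this makes the $g_1=g_2=1_G$ instance appear directly and so dispenses with the ordered-group manipulation you carry out at the end of Case~(3).
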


The following example is a doubly distributive hyperfield of type (3) in Proposition~\ref{DD.class}.
\begin{example} Let $F: = \mathbb{R}$ be the hyperfield with usual multiplication and hyperaddition given by
$$x\boxplus y = \begin{cases}
x & \text{ if } |x|> |y|,\\
y & \text{ if } |x|< |y|,\\
\{z \,|\, |z|<|x|\} & \text{ if } |x| = |y|.
\end{cases}
$$
This hyperfield is stringent and arises from the short exact sequence of groups
$$1 \to GF(3)^\times \xrightarrow{\phi} \mathbb{R}^\times \xrightarrow{\psi} \mathbb{R}_{>0} \to 1\, .$$
\end{example}


Another natural example of doubly distributive skew hyperfields of type (3) in Proposition~\ref{DD.class} can be found in \cite{Pen18}, which is built around a (noncommutative) such hyperfield (the one called $L^\sigma$).

Before showing the proof of Proposition~\ref{DD.class}, We'll first introduce a useful lemma.

\begin{lem}\label{dd.1.-1} Let $R$ be a stringent skew hyperfield. $R$ is doubly distributive if and only if 
$$(1\boxplus -1) (1\boxplus -1) = 1\boxplus -1 \boxplus 1\boxplus -1.$$
\end{lem}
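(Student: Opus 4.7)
The plan is as follows. The forward direction is immediate: one specializes the definition of double distributivity at $a = c = 1$ and $b = d = -1$ to recover exactly the displayed identity. For the reverse direction, assume $(1\boxplus -1)(1\boxplus -1) = 1\boxplus -1\boxplus 1\boxplus -1$; the task is to verify that $(a\boxplus b)(c\boxplus d) = ac\boxplus ad\boxplus bc\boxplus bd$ for every $a,b,c,d \in R$. First observe that the inclusion $\subseteq$ is automatic from ordinary (single) distributivity: given $x\in a\boxplus b$ and $y\in c\boxplus d$, right distributivity gives $xy\in ay\boxplus by$, while $ay\in a(c\boxplus d) = ac\boxplus ad$ and $by\in bc\boxplus bd$. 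So only the reverse inclusion remains, and the argument splits into cases according to whether $a=-b$ and whether $c=-d$.

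When $a\neq -b$ and $c\neq -d$, stringency collapses $a\boxplus b$ and $c\boxplus d$ to singletons $\{e\}$ and $\{f\}$; rearranging the right hand side via associativity and commutativity of $\boxplus$ as $(ac\boxplus bc)\boxplus (ad\boxplus bd) = ec\boxplus ed = e(c\boxplus d) = \{ef\}$ matches the left hand side. The mixed case $a=-b$, $c\neq -d$ (and its symmetric twin) is similar: with $c\boxplus d = \{f\}$, left distributivity gives $ac\boxplus ad = \{af\}$ and $-ac\boxplus -ad = \{-af\}$, so the right hand side becomes $\{af\}\boxplus\{-af\} = af\boxplus -af$, which also equals the left hand side $(a\boxplus -a)f$ by right distributivity.

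The one case where the hypothesis is genuinely used is $a=-b$ and $c=-d$. Here single distributivity rewrites $a\boxplus -a = a(1\boxplus -1)$ and $c\boxplus -c = (1\boxplus -1)c$; by associativity of the set product,
\[
(a\boxplus -a)(c\boxplus -c) = a(1\boxplus -1)(1\boxplus -1)c,
\]
and the hypothesis turns this into $a(1\boxplus -1\boxplus 1\boxplus -1)c$. Pulling $a$ out on the left and $c$ out on the right by iterated single distributivity produces $ac\boxplus -ac\boxplus ac\boxplus -ac$, which agrees with the right hand side $ac\boxplus -ac\boxplus -ac\boxplus ac$ up to commutativity of $\boxplus$. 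The main obstacle will be the bookkeeping in this last case: specifically, checking that $(1\boxplus -1)$ can be ``moved past'' $c$ using only single distributivity (so that $(1\boxplus -1)c = c\boxplus -c = c(1\boxplus -1)$), and that pulling $a$ and $c$ outside the product invokes only the standard distributive law rather than the double distributivity we are trying to prove.
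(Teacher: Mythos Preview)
Your proof is correct and follows essentially the same route as the paper's: reduce to the case $a=-b$, $c=-d$ using stringency plus ordinary distributivity, and in that case factor both sides as $a(\cdots)c$ to see that the displayed identity is exactly what is needed. The paper is terser in the non-critical cases (it just says ``the equation is just about distributivity'' when either $a\neq -b$ or $c\neq -d$), whereas you spell out the mixed case explicitly; and your final worry about moving $(1\boxplus -1)$ past $c$ is a non-issue, since your own argument never requires it---you use $c\boxplus -c = (1\boxplus -1)c$ on the right, not $c(1\boxplus -1)$.
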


\begin{proof} By Definition~\ref{dd}, $R$ is doubly distributive if and only if $(a\boxplus b) (c\boxplus d) = ac \boxplus ad \boxplus bc \boxplus bd$, for any $a, b, c, d \in R$.

As $R$ is stringent, we have $u\boxplus v$ is a singleton if $u\neq -v$. So if either $a\neq -b$ or $c\neq -d$, then the equation above is just about distributivity. It already holds.

If both $a = -b$ and $c = -d$, then 
$$(a\boxplus b) (c\boxplus d) = (a\boxplus -a) (c\boxplus -c) = a (1\boxplus -1) (1\boxplus -1)c $$
and
$$ ac \boxplus ad \boxplus bc \boxplus bd = ac \boxplus -ac \boxplus -ac \boxplus ac = a (1\boxplus -1 \boxplus 1\boxplus -1) c.$$

So $R$ is doubly distributive if and only if 
$$(1\boxplus -1) (1\boxplus -1) = 1\boxplus -1 \boxplus 1\boxplus -1.$$
\end{proof}

Now we will present the proof of Proposition~\ref{DD.class}.

\begin{proof}[Proof of Proposition~\ref{DD.class}] 
By Proposition~\ref{ddsingleton} and Theorem~\ref{class.ssH}, we know that a doubly distributive skew hyperfield $R$ also has the form $F \rtimes_{H, \psi} G$, where $F$ is either the Krasner hyperfield or the sign hyperfield or a skew field, with a short exact sequence of groups
$$1 \to F^\times \xrightarrow{\phi} H \xrightarrow{\psi} G \to 1\, .$$ 
So it suffices to show that the hyperfields of type (1) and (2) are doubly distributive and the hyperfields with $F$ a skew field are doubly distributive if and only if they are of type (3).

\emph{Case 1:} When $F = \mathbb{K} = \{1, 0\}$, hyperaddition is defined by
\begin{align*}
x\boxplus 0 & = \{x\}, \\
x \boxplus y & =\begin{cases}
\{x\}	&\text{ if $ \psi(x) > \psi(y)$,}\\
\{y\}	&\text{ if $ \psi(x) < \psi(y)$,}\\
\{z\,|\, \psi(z)\leq \psi(x)\}\cup \{0\}	&\text{ if $ \psi(x) = \psi(y)$, that is $x=y$.}
\end{cases}
\end{align*}

By Lemma~\ref{dd.1.-1}, $R$ is doubly distributive if and only if
$$(1\boxplus 1) (1\boxplus 1) = 1\boxplus 1 \boxplus 1\boxplus 1.$$

$$(1\boxplus 1) (1\boxplus 1) = (\{z\,|\, \psi(z)\leq 1\}\cup \{0\}) \cdot (\{z\,|\, \psi(z)\leq 1\}\cup \{0\}) = \{z\,|\, \psi(z)\leq 1\}\cup \{0\},$$
and
$$1\boxplus 1 \boxplus 1\boxplus 1 = (\{z\,|\, \psi(z)\leq 1\}\cup \{0\}) \boxplus (\{z\,|\, \psi(z)\leq 1\}\cup \{0\}) = \{z\,|\, \psi(z)\leq 1\}\cup \{0\}.$$

So $R$ is doubly distributive when $F=\mathbb{K}$.

\emph{Case 2:} When $F = \mathbb{S}=\{1,-1,0\}$, hyperaddition is defined by
\begin{align*}
x\boxplus 0 & = \{x\}, \\
x \boxplus y & =\begin{cases}
\{x\}	&\text{ if $ \psi(x) > \psi(y)$,}\\
\{y\}	&\text{ if $ \psi(x) < \psi(y)$,}\\
\{x\}	&\text{ if $x=y$,}\\
\{z\,|\, \psi(z)\leq \psi(x)\}\cup \{0\}	&\text{ if $x=-y$.}
\end{cases}
\end{align*}

By Lemma~\ref{dd.1.-1}, $R$ is doubly distributive if and only if
$$(1\boxplus -1) (1\boxplus -1) = 1\boxplus -1 \boxplus 1\boxplus -1.$$

$$(1\boxplus -1) (1\boxplus -1) = (\{z\,|\, \psi(z)\leq 1\}\cup \{0\}) \cdot (\{z\,|\, \psi(z)\leq 1\}\cup \{0\}) = \{z\,|\, \psi(z)\leq 1\}\cup \{0\},$$
and
$$1\boxplus -1 \boxplus 1\boxplus -1 = (\{z\,|\, \psi(z)\leq 1\}\cup \{0\}) \boxplus (\{z\,|\, \psi(z)\leq 1\}\cup \{0\}) = \{z\,|\, \psi(z)\leq 1\}\cup \{0\}.$$

So $R$ is doubly distributive when $F=\mathbb{S}$.

\emph{Case 3:} When $F$ is a skew field, hyperaddition is defined by
\begin{align*}
x\boxplus 0 & = \{x\}, \\
x \boxplus y & =\begin{cases}
\{x\}	&\text{ if $ \psi(x) > \psi(y)$,}\\
\{y\}	&\text{ if $ \psi(x) < \psi(y)$,}\\
x\boxplus_{\psi(x)} y	&\text{ if $ \psi(x) = \psi(y)$ and $0\notin x\boxplus_{\psi(x)} y$,}\\
\{z\,|\, \psi(z)<\psi(x)\}\cup\{0\}	&\text{ if $ \psi(x) = \psi(y)$ and $0\in x\boxplus_{\psi(x)} y$.}\\
\end{cases}
\end{align*}

By Lemma~\ref{dd.1.-1}, $R$ is doubly distributive if and only if
$$(1\boxplus -1) (1\boxplus -1) = 1\boxplus -1 \boxplus 1\boxplus -1.$$

$$(1\boxplus -1) (1\boxplus -1) = (\{z\,|\, \psi(z)< 1\}\cup \{0\}) \cdot (\{z\,|\, \psi(z)< 1\}\cup \{0\}) = \{xy \,|\, \psi(x), \psi(y)<1 \}\cup \{0\},$$
and
$$1\boxplus -1 \boxplus 1\boxplus -1 = (\{z\,|\, \psi(z)< 1\}\cup \{0\}) \boxplus (\{z\,|\, \psi(z) < 1\}\cup \{0\}) = \{z\,|\, \psi(z)< 1\}\cup \{0\}.$$

So $R$ is doubly distributive if and only if 
$$\{xy \,|\, \psi(x), \psi(y)<1 \}\cup \{0\}  = \{z\,|\, \psi(z)< 1\}\cup \{0\}.$$

We claim that $$\{xy \,|\, \psi(x), \psi(y)<1 \}\cup \{0\} = \psi^{-1}(\psi(1)\downarrow) \cup \{0\} = \{z\,|\, \psi(z)< 1\}\cup \{0\},$$
if and only if
$$\{ab \, | \, a, b <1_G \} = \{c \, | \,c<1_G \}.$$

$(\Rightarrow): $ If $\{xy \,|\, \psi(x), \psi(y)<1 \}\cup \{0\} = \{z\,|\, \psi(z)< 1\}\cup \{0\}$, the direction $\subseteq$ is clear. We just need to consider the other direction. Let $c\in G$ be such that $c<1_G$ and let $z \in \psi^{-1}(c)$. Then there exist $x, y\in H$ such that $z = xy$ and $ \psi(x), \psi(y)<1$ by our assumption. So $c = \psi(z) = \psi(xy) = \psi(x)\psi(y)$. We have $c \in \{ab \, | \, a, b <1_G \}$.

$(\Leftarrow): $ If $\{ab \, | \, a, b <1_G \} = \{c \, | \,c<1_G \}$, the direction $\subseteq$ is also clear. We just need to consider the other direction. Let $z\in H$ be such that $\psi(z)< 1$ and let $c = \psi(z)$. Then there exist $a,b\in G$ such that $c = ab$ and $a,b<1_G$ by our assumption. Let $x\in H$ be such that $\psi(x) = a<1_G$ and let $y = x^{-1}z$. We have $\psi(y) = \psi(x^{-1}z) = a^{-1}c = b<1_G$ and $z = xy$. So $z\in \{xy \,|\, \psi(x), \psi(y)<1 \}$.

So $R$ is doubly distributive if and only if 
$$\{ab \, | \, a, b <1_G \} = \{c \, | \,c<1_G \}.$$
\end{proof}

\section{Reduction of stringent skew hyperrings to hyperfields}\label{sect.class.SSHR}

In this section, we will show that stringent skew hyperrings are very restricted.

\begin{thm}\label{hypring.class} Every stringent skew hyperring is either a skew ring or a stringent skew hyperfield.
\end{thm}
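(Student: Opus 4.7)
Plan: The plan is to combine the classification of stringent hypergroups (Theorem~\ref{classgroups}) with a careful analysis of how multiplication interacts with the wedge-sum structure, so that any genuine multivaluedness of hyperaddition forces $R$ to be a skew hyperfield.

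First I would apply Theorem~\ref{classgroups} to the underlying additive hypergroup of $R$ to write $R = \bigvee_{g \in G} R_g$, where $G$ is the totally ordered set of $\sim_R$-equivalence classes and each $R_g$ is (as hypergroup) $\mathbb{K}$, $\mathbb{S}$, or a group. Exactly as in Section~\ref{sect.class.ssH}, distributivity of $R$ together with the definition of $<_R'$ lifts the multiplication of $R$ to a multiplication on $G$, so that $(G, \cdot, <_R')$ is a totally ordered monoid (Lemma~\ref{eqv.monoid}), and $R_{1_G}$ inherits the structure of a skew hyperring via Lemma~\ref{subgroup.eqv.hyperring}. Arguing as in Lemma~\ref{Fg} and Lemma~\ref{R1}, $R_{1_G}$ is $\mathbb{K}$, $\mathbb{S}$, or a skew ring. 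If hyperaddition in $R$ is single-valued everywhere, then $R$ is a skew ring and we are done; so assume there exists $a \in R^{\times}$ with $a \boxplus (-a) \neq \{0\}$, and pick a nonzero witness $z \in a \boxplus (-a)$. By the wedge-sum description, $\psi(z) <_R' \psi(a)$, so $G$ has at least two elements.

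The heart of the proof is then to push this single witness of multivaluedness around $R$ using distributivity. For any $b \in R^{\times}$, distributivity gives
\[
b \cdot (a \boxplus (-a)) = ba \boxplus (-ba), \qquad (a \boxplus (-a)) \cdot b = ab \boxplus (-ab),
\]
so the element $bz$ (resp.\ $zb$) provides a nonzero witness of multivaluedness for $ba \boxplus (-ba)$ (resp.\ $ab \boxplus (-ab)$), provided $bz \neq 0$ (resp.\ $zb \neq 0$). An essential preparatory step is therefore to rule out zero divisors: if $bz = 0$ one can apply distributivity to $(1 \boxplus (-1)) \cdot b$ and use stringency to derive a contradiction, so multiplication restricts to a map $R^{\times} \times R^{\times} \to R^{\times}$. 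With this in hand, the map $h \mapsto \psi(a)\cdot h$ becomes a strict-order-preserving injection of $G$ into $\psi(a) \cdot G$; combining such translations on both sides with the existence of elements strictly below $1_G$ in $G$ (obtained by translating $\psi(z)/\psi(a)$-style witnesses) shows every $h \in G$ has both a left and right inverse, which must agree by associativity. Hence $G$ is a totally ordered group. In parallel, one shows that every nonzero $b \in R_{1_G}$ has a multiplicative inverse: take a nonzero $w \in 1 \boxplus (-1)$ obtained from the witness $z$ by rescaling inside the fiber, and use distributivity in the form $b \cdot (1 \boxplus (-1)) = b \boxplus (-b)$, together with stringency, to produce the required inverse inside $R_{1_G}$. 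Combined with Lemma~\ref{R1}'s hypergroup-level trichotomy, this identifies $R_{1_G}$ as $\mathbb{K}$, $\mathbb{S}$, or a skew field.

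With $G$ now known to be a group and $R_{1_G}$ a hyperfield (Krasner, sign, or skew), the short exact sequence $1 \to R_{1_G}^{\times} \to R^{\times} \to G \to 1$ satisfies the hypotheses of Definition~\ref{def:hyp}, and the proof of Theorem~\ref{class.ssH} then identifies $R$ with $R_{1_G} \rtimes_{R^{\times}, \psi} G$. By Lemma~\ref{hyperfield.construct} this is a stringent skew hyperfield, completing the dichotomy.

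The chief obstacle is the invertibility step: a single nonzero witness $z \in a \boxplus (-a)$ must be amplified, via distributivity and stringency, into multiplicative inverses for every nonzero element of $R$, and before this amplification can even begin one must exclude zero divisors. I expect the delicate point to be establishing the no-zero-divisor property in the stringent skew-hyperring setting, since the proof cannot yet assume the multiplicative structure is a group; once that is in place, the strict order preservation on the totally ordered monoid $G$ makes the rest of the invertibility argument essentially forced.
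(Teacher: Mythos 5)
Your overall frame (decompose the additive hypergroup via Theorem~\ref{classgroups}, dispose of the single-valued case as a skew ring, and in the multivalued case prove that $R$ is a skew hyperfield) is the same as the paper's, but the steps that are supposed to produce invertibility do not work as stated, and that is where all the content lies. Concretely: (a) a nonzero witness $z \in a \boxplus (-a)$ need \emph{not} satisfy $\psi(z) <_R' \psi(a)$ --- if the layer of $a$ is of Krasner or sign type then $\pm a$ themselves are such witnesses (e.g.\ $R = \mathbb{S}$, where $G$ is trivial) --- so the correct dichotomy is the paper's one on whether $G$ is trivial, in which case $R = R_{1_G}$ is $\mathbb{K}$, $\mathbb{S}$ or a skew ring and you are done. (b) Your zero-divisor exclusion has no force: $(1 \boxplus (-1)) \cdot b = b \boxplus (-b)$ always contains $0$, so finding $0$ in it is not a contradiction, and nothing else in the sketch rules out $bz = 0$. (c) The claim that strictly order-preserving injective translations together with the existence of elements below $1_G$ force every $h \in G$ to have inverses is a non sequitur: the totally ordered monoid $(\mathbb{Z}_{\leq 0}, +)$ has both properties and is not a group, so some further use of the hyperaddition is indispensable. (d) The proposed route to inverses in $R_{1_G}$, namely $b \cdot (1 \boxplus (-1)) = b \boxplus (-b)$ ``plus stringency'', cannot succeed: in the only case where anything needs proving ($R_{1_G}$ a skew ring) one has $1 \notin b \boxplus (-b)$, since $b \boxplus (-b)$ consists of $0$ and elements strictly below the class $1_G$; no equation $b \cdot y = 1$ ever appears.

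The missing idea is the single trick on which the paper's short proof runs: when $G$ is nontrivial, for $x \in R^{\times}$ choose $s, t$ with $x \cdot s >_R 1$ and $t \cdot x >_R 1$; since everything strictly below an element $w$ lies in $w \boxplus (-w)$, one gets $1 \in x\cdot s \boxplus -(x \cdot s) = x \cdot (s \boxplus -s)$ and $1 \in (t \boxplus -t)\cdot x$, hence a right inverse $y$ and a left inverse $z$ of $x$, which coincide by associativity. That is, one must multiply $x$ up \emph{strictly above} $1$ and then harvest $1$ from the hypersum $w \boxplus (-w)$; your steps (2) and (3) have no substitute for this mechanism. Once it is in place, every nonzero element is a unit in one stroke, so the separate treatment of zero divisors, the group structure on $G$, invertibility in $R_{1_G}$, and the concluding appeal to Definition~\ref{def:hyp} and the layering of Theorem~\ref{class.ssH} are all unnecessary: a stringent skew hyperring in which every nonzero element is a unit is already a stringent skew hyperfield.
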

\begin{proof} If $G$ is trivial, then $R = R_{1_G}$. So $R$ is either $\mathbb{K}$, or $\mathbb{S}$, or a skew ring.

If $G$ is nontrivial, we would like to show that every element $x$ in $R^\times$ is a unit. Now let $s$ and $t$ in $R^\times$ be such that $x\cdot s >_R 1$ and $t \cdot x >_R 1$. Then by the remarks after Lemma~\ref{addition}, we have 
$$1 \in x\cdot s \boxplus -x\cdot s = x\cdot (s \boxplus -s),$$
$$1 \in t \cdot x \boxplus -t \cdot x = (t\boxplus -t) \cdot x.$$
So there exists $y \in s \boxplus-s$ and $z \in t \boxplus -t$ such that $1= x\cdot y = z \cdot x$. Thus $y = (z \cdot x) \cdot y = z\cdot (x \cdot y) = z$. So $x$ has a multiplicative inverse $y$ in $R$. Then $x$ is a unit of $R$.

So every stringent skew hyperring is either a skew ring or a stringent skew hyperfield.
\end{proof}

We cannot classify doubly distributive hyperring using our classification because not every doubly distributive hyperring is stringent. The following is a counterexample.

\begin{example}\label{coutex.ddhyperringnotstringent} The hyperring $\mathbb{K} \times \mathbb{K}$ that is the square of the Krasner hyperfield is doubly distributive but not stringent.
\end{example}

\section{Every stringent skew hyperfield is a quotient of a skew field}\label{sect.quotient} In this section, we would like to show that every stringent skew hyperfield is a quotient of a skew field by some normal subgroup. In particular, every stringent hyperfield is a quotient of a field by some special kind of subgroups, called `{\em h\"ullenbildend}'. This kind of subgroups was studied by Diller and Grenzd\"orffer in \cite{DG73} when they tried to unify the treatment of various notions of convexity in projective spaces over a field $K$ by introducing for any subgroup $U\leq K^{\times}$ the notion of $U$-convexity. They showed that this notion is reasonably well behaved if and only if $U$ is as follows.

\begin{defn}\cite{DG73} Let $K$ be a field and let $U\leq K^{\times}$. $U$ is called $U$-`{\bf h\"ullenbildend}' (hull producing) if $U$ satisfies
\begin{equation}\label{dress}
x, y \in K, x+y-xy \in U \rightarrow x \in U \text{ or } y \in U.
\end{equation}
\end{defn}

In \cite{Dre77}, Dress presented a simple complete classification of such `h\"ullenbildend' subgroups $U$. We will combine our classification of stringent (skew) hyperfields into three types with Dress's classification of such subgroups.

\begin{thm}\cite[Theorem 1]{Dre77}\label{hullen} Let $U\leq K^{\times}$ satisfy (\ref{dress}) and let $S_U=\{x\in K \, | \, x \notin U \text{ and } x + U \subseteq U \}$. Then $S_U$ is the maximal ideal of a valuation ring $R = R_U(=\{ x \in K  \, | \, x\cdot S_U \subseteq S_U \})$ in $K$, $U$ is contained in $R$, $\overline{U} = \{\overline{x} \in \overline{K}_U = R_U / S_U  \, | \, x \in U \}$ is either a domain of positivity in $\overline{K}_U$ (if $-1 \notin U$, $2 \in U $) or $\overline{U} = \{\overline{1}\}$ or $\overline{U} = \overline{K}_U^{\times}$ and, in any case, $U = \{x \in R_U  \, | \, \overline{x} \in \overline{U}\}$.
\end{thm}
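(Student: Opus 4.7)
My plan is to extract the valuation ring from a multiplicative reformulation of condition~(\ref{dress}). Using the identity $x + y - xy = 1 - (1-x)(1-y)$ and the substitution $a = 1-x$, $b = 1-y$, condition~(\ref{dress}) becomes: if $ab \in W$ then $a \in W$ or $b \in W$, where $W := 1 - U$. Equivalently, $V := K \setminus W$ is closed under multiplication, exposing a ``prime-like'' structure on $W$ that should drive the construction and let me translate additive questions about $U$ into multiplicative ones.

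First I would establish the basic ring-theoretic claims. A careful bookkeeping with (\ref{dress}) shows that $S_U$ is an additive subgroup of $K$: closure under the sum is immediate from $(x+y)+U \subseteq x+(y+U) \subseteq x+U \subseteq U$, while $x+y \notin U$ (when $x+y\neq 0$) and closure under negation both follow from applying (\ref{dress}) to suitable rearrangements involving $x$, $y$, and elements of $U$. Granted this, $R_U$ is a subring of $K$ and $S_U$ a two-sided ideal directly from the definition of $R_U$ as the two-sided stabilizer of $S_U$. The inclusion $U \subseteq R_U$ follows because, for $u \in U$ and $s \in S_U$, the computation $u^{-1}(us + U) = s + u^{-1}U = s + U \subseteq U$ gives $us + U \subseteq uU = U$, and $us \notin U$ because $U$ is a group and $s \notin U$; hence $us \in S_U$.

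The main obstacle is the dichotomy that makes $R_U$ a valuation ring: for each $x \in K^{\times}$, either $x \in R_U$ or $x^{-1} \in R_U$. I would argue by contradiction: assume both $xS_U \not\subseteq S_U$ and $x^{-1}S_U \not\subseteq S_U$, and pick witnesses $s, t \in S_U$ with $xs, x^{-1}t \notin S_U$. Combining these multiplicatively---the product $xs \cdot x^{-1}t = st$ lies in $S_U$ by ideal-hood, while each factor, failing to be in $S_U$, is forced by (\ref{dress}) (equivalently, by the multiplicative closure of $V$) into a constrained position with respect to $U$---should force a contradiction with the subgroup structure of $U$, for example by producing $1 \in S_U$. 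Once $R_U$ is a valuation ring, $S_U$ is automatically its unique maximal ideal: any $s \in S_U$ is a non-unit of $R_U$ (if $s^{-1} \in R_U$ then applying the same dichotomy to $s^{-1}\cdot s = 1$ contradicts $s \notin U$), and the non-units of a valuation ring form the unique maximal ideal.

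The residue claims finish the argument. Setting $\overline{K}_U = R_U / S_U$ and letting $\overline{U}$ be its image under the quotient map, the inclusion $U \subseteq \{x \in R_U : \overline{x} \in \overline{U}\}$ is tautological, while the reverse follows because $\overline{x} = \overline{u}$ for some $u \in U$ means $x - u \in S_U$, whence $x = u + (x-u) \in u + S_U \subseteq U$ by the very definition of $S_U$. Finally, $\overline{U}$ inherits a hull-producing property in the field $\overline{K}_U$ with $S_{\overline{U}} = \{\overline{0}\}$; a direct case analysis on whether $\overline{U} = \overline{K}_U^{\times}$, whether $-1 \in \overline{U}$, and whether $2 \in \overline{U}$ pins down the three listed possibilities: the full multiplicative group, the trivial subgroup $\{\overline{1}\}$, or an index-two cone (a domain of positivity).
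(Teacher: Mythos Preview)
The paper does not contain a proof of this theorem at all: it is quoted verbatim as \cite[Theorem~1]{Dre77} and used as a black box to motivate the choice of the subgroup $U$ in Section~\ref{sect.quotient}. There is therefore no ``paper's own proof'' against which to compare your proposal.

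As for the proposal itself, the multiplicative reformulation via $x+y-xy = 1-(1-x)(1-y)$ is a genuine and useful observation, and the treatment of $S_U$ as an additive group, of $U \subseteq R_U$, and of the reconstruction $U = \{x \in R_U : \overline{x} \in \overline{U}\}$ are all clean. The main soft spot is the valuation-ring dichotomy: you write that combining the witnesses $xs$ and $x^{-1}t$ ``should force a contradiction,'' but as stated the product $xs \cdot x^{-1}t$ is not $st$ unless $K$ is commutative (which the paper assumes here, but you should say so), and even then you have not actually exhibited the contradiction---you have only gestured at the multiplicative closure of $V = K \setminus (1-U)$ without linking $xs, x^{-1}t \notin S_U$ to membership in $V$. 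That step needs to be made explicit before the argument is complete. If you want to check your work, the place to look is Dress's original paper rather than the present one.
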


We will first explain how to choose the suitable subgroup $U$ in the case of stringent hyperfield and then give the proof in full generality for stringent skew hyperfields.

From our classification in Theorem~\ref{class.ssH}, we know that every stringent hyperfield $F$ has the form $M \rtimes_{H, \psi} G$, where $M$ is either $\mathbb{K}$, or $\mathbb{S}$, or a field. To show that $F$ is a quotient by some subgroup $U$, we will choose $U$ with $\overline{U} = \{\overline{1}\}$ if $M$ is $\mathbb{K}$, choose $U$ with $\overline{U}$ a domain of positivity if $M$ is $\mathbb{S}$, and choose $U$ with $\overline{U} =\overline{K}_U^{\times}$ if $M$ is a field.

Now we begin the proof that every stringent skew hyperfield is a quotient of a skew field $K$ by some normal subgroup $U$. First, we recall the definition of quotient hyperfield. The quotient hyperfield $K/U = \{[g] = gU \, | \, g \in K \}$ was introduced by Krasner in \cite{Kra83} with multiplication given by $[g] \cdot [h] = [gh]$, for $[g], [h] \in K/U$. Hyperaddition is given by $[g]\boxplus [0] = [g]$ and $[g] \boxplus [h] = \{[f] \subseteq K/U  \, | \, f \in gU + hU \}$, for $[g], [h] \in (K/U)^{\times}$. As the subgroup $U$ we are choosing would be normal, so this quotient works in the skew case.

We may suppose that a stringent skew hyperfield $F = M \rtimes_{H, \psi} G$ arises from a short exact sequence of groups
$$1 \to M^\times \xrightarrow{\phi} H \xrightarrow{\psi} G \to 1\, ,$$
where $G$ is a totally ordered group equipped with a total order $\leq$ and $M$ is either $\mathbb{K}$, or $\mathbb{S}$, or a skew field. We define an order $\leq'$ on $G$ such that $x \leq' y$ if and only if $y\leq x$. So $\leq'$ is also a total order on $G$. 
Similarly as in the non-skew case, we will also choose $U$ with $\overline{U} = \{\overline{1}\}$ if $M$ is $\mathbb{K}$, choose $U$ with $\overline{U}$ a domain of positivity if $M$ is $\mathbb{S}$, and choose $U$ with $\overline{U} =\overline{K}_U^{\times}$ if $M$ is a skew field.

Our difficulty now is to choose a suitable skew field $K$ for the quotient hyperfield corresponding to each $U$. We will introduce two different constructions of skew fields, as follows.

\begin{example}\label{constr.valuation}\cite{FS01} (Construction 1)
Let $k$ be an arbitrary field. Define $K = k((G))$ to be the ring of formal power series whose powers come from $G$, that is, the elements of $K$ are functions from $G$ to $k$ such that the support of each function is a well-ordered subset of $(G, \leq')$. Addition is pointwise, and multiplication is the Cauchy product or convolution, that is the natural operation when viewing the functions as power series
$$\sum _{{a\in G}}p(a)x^{a}$$
It is well known (and easy to check) that $K$ is a skew field. 
\end{example}

We will construct a skew field $K = k((G))$ as in Example~\ref{constr.valuation} by choosing $k$ to be an arbitrary field when $M$ is $\mathbb{K}$ and choosing $k$ to be the field $\mathbb{R}$ of real numbers (or any other ordered field) when $M$ is $\mathbb{S}$. 

The second construction is for a stringent skew hyperfield $F = M \rtimes_{H, \psi} G$ when $M$ is a skew field.

\begin{example} \label{construction2} (Construction 2)
We define $K = M[[G]]$ to be the set of formal sums of elements of $H$ all from different layers such that the support is well-ordered, that is, an element of $K$ is a function $p$ from $G$ to $H$ such that for any $g$ in $G$, $p(g) \in \psi^{-1}(g) \cup \{0\} = A_g$ and the support of each function is a well-ordered subset of $(G, \leq')$. As $M$ is a skew field and each $\lambda_h$ with $h\in H$ is an isomorphism of hypergroups, then $(A_g, \boxplus_g, 0)$ is always an abelian group. We claim that $K$ is a skew field, viewing functions as power series
$$\sum_{{a\in G}} p(a) x^a \,,$$
with addition $+_K$ given by
$$\sum_{{a\in G}} p(a) x^a +_K \sum_{{a\in G}} q(a) x^a = \sum_{{a\in G}} (p(a)\boxplus_a q(a)) x^a,$$
and the additive identity is $\sum_{{a\in G}} 0 x^a$. Multiplication $\cdot_K$ is given by
$$\Big(\sum_{{a\in G}} p(a) x^a\Big) \cdot_K \big(\sum_{{a\in G}} q(a) x^a\Big) = \sum_{{s\in G}} \Big(\underset{g\cdot_G h = s}{\underset{h\in \supp(q),}{\underset{g\in \supp(p),}{\boxplus_{s}}}} p(g)\cdot_H q(h)\Big) x^s,$$
and the multiplicative identity is $1 x^{1_G}$. Since the proof that this really gives a skew field is a long calculation and is very similar to that for $k((G))$, we do not give it here but in Appendix~\ref{appendix}.
\end{example}

Now we divide the proof into three cases and show that $F\cong K/U$ in each case.  For simplicity, we denote $\min(\supp(p))$ by $m_p$ for $p \in K^{\times}$.

\begin{enumerate}[\emph{\text{Case}} 1:]
\item If $M$ is $\mathbb{K}$, then let $U = \{p\in K^\times \, | \, m_p = 1_G\}$. It's easy to check that $U$ is normal.
Then the quotient hyperfield $K/U = \{ [q] = qU  \, | \, q\in K\}$ has 
\begin{align*}
[q] &= \{p \in K^\times \, | \, m_p = m_q\}, \\
[0] &= \{0_K\}.
\end{align*}
So we can identify $[q]$ in $(K/U)^\times$ with $m_q$ in $G$ and identify $[0]$ in $K/U$ with $0$. So we have $K/U\cong (G\cup \{0\}, \boxplus, \cdot)$ with multiplication given by
\begin{align*}
0\cdot g &= 0,\\
g \cdot h & = g \cdot_G h,
\end{align*}
where $g, h \in G$.
And hyperaddition is given by
\begin{align*}
g\boxplus 0 & = \{g\}, \\
g \boxplus h & = \begin{cases}
\{g\} & \text{ if $g <' h$, that is $g>h$, } \\
\{h\} & \text{ if $g >' h$, that is $g<h$, } \\
\{f\in G  \, | \, f \geq' g \}\cup \{0\} = \{f\in G  \, | \, f \leq g \}\cup \{0\} & \text{ if $g = h$,} 
\end{cases}
\end{align*}
where $g , h \in G$.

Now it is clear to see that $K/U\cong (G\cup \{0\}, \boxplus, \cdot) \cong \mathbb{K} \rtimes_{H, \psi} G = F$.

\item If $M$ is $\mathbb{S}$, $k = \mathbb{R}$ (or any other ordered field) and $K= k((G))$, then let $U = \{p\in K^\times \, | \, m_p = 1_G \text{ and } p(1_G) > 0\}$. It's easy to check that $U$ is normal. Then the quotient hyperfield $K/U = \{ [q] = qU  \, | \, q\in K\}$ has 
\begin{align*}
[q] & = \{p \in K^\times \, | \, m_p = m_q \text{ and } p( m_p)  > 0\} \text{ if } q(m_q)> 0, \\
[q] & = \{p \in K^\times \, | \, m_p  = m_q  \text{ and } p( m_p)  < 0\} \text{ if } q(m_q)< 0, \\
[0] & = \{0_K\}.
\end{align*}
We can identify $[q]$ in $(K/U)^\times$ with $(1, m_q)$ if $q(m_q) >0$, identify $[q]$ in $(K/U)^\times$ with $(-1, m_q)$ if $q(m_q) < 0$, and identify $[0]$ with $0$. So we have $K/U\cong ((\mathbb{S}^\times \times G )\cup \{0\}, \boxplus, \cdot)$ with multiplication given by
\begin{align*}
(r, g)\cdot 0 &= 0, \\
(r_1, g_1) \cdot (r_2, g_2) &= (r_1 \cdot_\mathbb{S} r_2, g_1 \cdot_G g_2),
\end{align*}
where $r, r_1, r_2 \in \mathbb{S}^\times$ and $g , g_1, g_2 \in G$.
And hyperaddition is given by
\begin{align*}
(r, g)\boxplus 0 & = \{(r, g)\}, \\
(r_1, g_1) \boxplus (r_2, g_2) & = \begin{cases}
\{(r_1, g_1)\} & \text{if $g_1 < ' g_2$, that is $g_1 > g_2$, } \\
\{(r_2, g_2)\} & \text{if $g_1 > ' g_2$, that is $g_1 < g_2$, } \\
\{(r_1, g_1)\} & \text{if $g_1 = g_2$ and $r_1 = r_2$ } \\
\{ (r, f)  \, | \, f \geq' g \}\cup \{0\} = \{ (r, f)  \, | \, f \leq g \}\cup \{0\} & \text{if $g_1 = g_2$ and $r_1 =- r_2$,} 
\end{cases}
\end{align*}
where $r, r_1, r_2 \in \mathbb{S}^\times$ and $g , g_1, g_2 \in G$.

So by Theorem~\ref{K.S.constrain}, $K/U \cong ((\mathbb{S}^\times \times G )\cup \{0\}, \boxplus, \cdot) \cong \mathbb{S} \rtimes_{H, \psi} G = F$. 

\item If $M$ is a skew field and $K = M[[G]]$, then let $U = \{p \in K^{\times}  \, | \, m_p = 1_G \text{ and } p(1_G) = 1 \}$. It's easy to check that $U$ is normal. Then the quotient hyperfield $K/U = \{ [q] = qU  \, | \, q\in K\}$ has 
\begin{align*}
[q] &= \{p \in K^\times \, | \, m_p  = m_q  \text{ and } p(m_p)  = q(m_q)\},\\
[0] &= 0_K. 
\end{align*}
We can identify $[q]$ in $(K/U)^\times$ with $q(m_q)$ in $H$ (clearly $\psi(q(m_q)) = m_q$) and identify $[0]$ with $0_F$. So we have $K/U\cong F$ with multiplication given by
\begin{align*}
[q]\cdot 0 &= 0, \\
[q] \cdot  [h]&= \{ p\in K^\times \, | \, m_p  = m_q\cdot_G m_h  \text{ and } p(m_p)  = q(m_q)\cdot_H h(m_h)\} = [p]
\end{align*}
Hyperaddition is given by
$$ [q]\boxplus 0 = [q],$$
\begin{align*}
&  [q] \boxplus [h] \\
= &  \begin{cases}
\{[q]\} & \text{if $m_q<' m_h$, that is $m_q> m_h$, } \\
\{[h]\} & \text{if $m_q>' m_h$, that is $m_q< m_h$, } \\
\{[p] = \{p\in K^\times \,|\, m_p = m_q \text{ and } p(m_p) = q(m_q)\boxplus_{m_q} h(m_h)\} \}& \text{if $m_q= m_h$ and $0 \notin q(m_q)\boxplus_{m_q} h(m_h)$,} \\
\{[p]\in (K/U)^\times \,|\, m_p >' m_q\}\cup \{0\} = \{[p]\in (K/U)^\times \,|\, m_p < m_q\}\cup \{0\}  & \text{if $m_q= m_h$ and $0 \in q(m_q)\boxplus_{m_q} h(m_h).$}
\end{cases}
\end{align*}
where $[q], [h]\in (K/U)^\times$.

So $K/U\cong M \rtimes_{H, \psi} G = F.$
\end{enumerate}
 
\begin{thm} Every stringent skew hyperfield is a quotient of a skew field.
\end{thm}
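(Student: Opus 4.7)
The plan is to invoke Theorem \ref{class.ssH} to reduce to three cases according to the building block $M \in \{\mathbb{K}, \mathbb{S}, \text{skew field}\}$, and in each case to exhibit a skew field $K$ together with a normal subgroup $U \trianglelefteq K^\times$ realizing $F = M \rtimes_{H, \psi} G$ as the Krasner quotient hyperfield $K/U$. The unifying idea is that $K$ should be a Hahn-series-type skew field whose natural valuation on $K^\times$ surjects onto $G$ (using the reversed order $\leq'$ so that supports are well-ordered in the usual sense), and $U$ should be the subgroup of series whose leading coefficient lies in a prescribed piece of the layer $A_{1_G}$.

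For $M = \mathbb{K}$, I would take $K = k((G))$ for an arbitrary field $k$ and set $U = \{p \in K^\times : m_p = 1_G\}$, where $m_p$ denotes the minimum of $\supp(p)$ under $\leq'$. Normality is immediate, cosets in $K/U$ are parametrized by $m_q \in G$, and a direct comparison of hyperaddition rules yields $K/U \cong \mathbb{K} \rtimes_{H, \psi} G$. For $M = \mathbb{S}$, I would specialize to $k = \mathbb{R}$ (or any ordered field) and further cut $U$ by the sign of the leading coefficient, so that cosets correspond to pairs $(\sgn q(m_q), m_q) \in \mathbb{S}^\times \times G$; Theorem \ref{K.S.constrain} guarantees $F$ itself has this split form, and the hyperoperations match once the ordered-field structure is used to see that $\sgn$ is multiplicative on cosets.

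The main obstacle is the third case, when $M$ is a skew field. Here $K$ must carry not merely the valuation and a sign but the entire leading coefficient, so the natural candidate is $K = M[[G]]$ (Construction 2): formal sums $\sum_{a \in G} p(a) x^a$ with $p(a) \in A_a$ and well-ordered support, under the Cauchy convolution product, where each layer $A_a$ is an abelian group via transport along any trivialization $\lambda_h$. Proving $M[[G]]$ is actually a skew field---associativity, well-orderedness of supports under convolution, and existence of multiplicative inverses---is the bulk of the work and mirrors the classical Hahn series argument; this step I would defer to Appendix \ref{appendix}. Normality of $U = \{p : m_p = 1_G,\ p(1_G) = 1\}$ relies on the stable-sums hypothesis on the original short exact sequence, which forces conjugation by any $h \in H$ to be a hyperfield automorphism of $M$ preserving each $A_g$. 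Once $K$ is in hand, the map $[q] \mapsto q(m_q)$ sends $K/U$ bijectively to $F^\times \cup \{0\}$, and a direct translation of the Krasner quotient hyperoperation into the wedge-sum hyperoperation of Definition \ref{def:hyp} finishes the proof.
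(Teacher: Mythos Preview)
Your proposal is correct and follows essentially the same route as the paper: the same three-case split via Theorem~\ref{class.ssH}, the same choices of $K$ (namely $k((G))$ with $k$ arbitrary for $M=\mathbb{K}$, $k$ ordered for $M=\mathbb{S}$, and $M[[G]]$ for $M$ a skew field) and of the normal subgroup $U$, the same appeal to Theorem~\ref{K.S.constrain} in the sign case, and the same deferral of the verification that $M[[G]]$ is a skew field to Appendix~\ref{appendix}. The only addition is your explicit remark tying normality of $U$ in the third case to the stable-sums condition, which the paper simply asserts as ``easy to check.''
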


\begin{cor} Every doubly distributive skew hyperfield is a quotient of a skew field.
\end{cor}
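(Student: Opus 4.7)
The plan is to obtain this as an immediate consequence of the preceding theorem together with Proposition~\ref{ddsingleton}. Specifically, let $F$ be a doubly distributive skew hyperfield. By Proposition~\ref{ddsingleton}, $F$ is stringent. The preceding theorem then states that every stringent skew hyperfield is a quotient of a skew field, so in particular $F$ is.

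There is essentially no obstacle here: the entire content of the corollary is the inclusion ``doubly distributive $\Rightarrow$ stringent'' (already established at the start of the paper) combined with the structural result just proved. All of the real work, namely the careful choice of the subgroup $U$ and the construction of the skew field $K$ (either $k((G))$ via Example~\ref{constr.valuation} or $M[[G]]$ via Example~\ref{construction2}), has already been carried out in the proof of the preceding theorem, split into the three cases $M = \mathbb{K}$, $M = \mathbb{S}$, and $M$ a skew field. So the corollary just records that those three cases, which exhaust the stringent skew hyperfields by Theorem~\ref{class.ssH}, in particular cover all doubly distributive skew hyperfields by virtue of Proposition~\ref{DD.class} (which classifies them among precisely those same three types).

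Hence the proof is a single line: apply Proposition~\ref{ddsingleton} to reduce to the stringent case, then invoke the preceding theorem. No additional verification is needed, since the distinguishing condition of type (3) in Proposition~\ref{DD.class}, namely $\{ab \mid a,b <1_G\} = \{c \mid c<1_G\}$, plays no role in the quotient construction; the construction works uniformly for any stringent skew hyperfield of the relevant form.
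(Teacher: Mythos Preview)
Your proposal is correct and matches the paper's approach exactly: the corollary is stated immediately after the theorem with no separate proof, so the intended argument is precisely the one-line deduction via Proposition~\ref{ddsingleton} that you give. Your additional commentary about the three cases and the irrelevance of the type~(3) condition is accurate but unnecessary for the proof itself.
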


It follows from the construction that the same statements with all instances of the word `skew' removed also hold. 

\appendix
\section{The construction 2 in Example~\ref{construction2} gives a skew field}\label{appendix}
\begin{lem} Let $F = M \rtimes_{H, \psi} G$ be a stringent skew hyperfield arising from a short exact sequence of groups
$$1 \to M^\times \xrightarrow{\phi} H \xrightarrow{\psi} G \to 1\, ,$$
where $G$ is a totally ordered group and $M$ is a skew field. Define $K = k[[G]]$ as we did in section~\ref{sect.quotient}. Then $K$ is a skew field.
\end{lem}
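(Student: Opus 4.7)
The plan is to adapt the classical Mal'cev--Neumann construction of a skew field of formal power series over a skew field, the only new ingredient being that the coefficient over $a\in G$ lives in the abelian group $(A_a,\boxplus_a,0)$ rather than in a single fixed ring. The technical engine is the standard \emph{Neumann lemma}: if $A$ and $B$ are well-ordered subsets of $(G,\leq')$, then $A\cdot B$ is well-ordered in $(G,\leq')$ and each $s\in A\cdot B$ has only finitely many factorizations $s=ab$ with $a\in A$, $b\in B$. This goes through verbatim as in the classical setting since $(G,\leq')$ is a totally ordered group.

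First I would verify that $(K,+_K,0_K)$ is an abelian group. Since $M$ is a skew field its additive group is abelian, and because each $\lambda_h$ is a hypergroup isomorphism, each $(A_a,\boxplus_a,0)$ is abelian as well; hence pointwise addition is single-valued and endows $K$ with an abelian group structure, with $(-p)(a):=-p(a)$. The support of $p+_K q$ lies in $\supp(p)\cup\supp(q)$, which is well-ordered in $(G,\leq')$ as a finite union of such sets.

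Next, multiplication is well-defined and associative. For $p,q\in K$ and $s\in G$, the Neumann lemma applied to $\supp(p)$ and $\supp(q)$ forces the hypersum defining $(p\cdot_K q)(s)$ to be a \emph{finite} sum in the abelian group $A_s$, hence single-valued and unambiguous; and $\supp(p\cdot_K q)\subseteq \supp(p)\cdot\supp(q)$ is well-ordered in $(G,\leq')$. Associativity of $\cdot_K$ and two-sided distributivity over $+_K$ then follow from the usual Cauchy-product reindexing over triples $(g_1,g_2,g_3)\in G^3$ with $g_1 g_2 g_3 = s$, using associativity of $\cdot_H$ and the fact that $\cdot_H$ distributes over $\boxplus$ in the skew hyperfield $F$; the element $1_M\, x^{1_G}$ is a two-sided multiplicative identity by inspection.

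The main obstacle is the construction of multiplicative inverses. Given $0\neq p\in K$, let $s:=m_p$ and $a:=p(s)$. Left-multiplying $p$ by the monomial $a^{-1}x^{s^{-1}}\in K^\times$ reduces us to the case $m_p=1_G$ and $p(1_G)=1_M$, so that $p = 1_M\, x^{1_G} +_K p'$ with $\supp(p')\subseteq G_{>'1_G}$. I would then set $q:=\sum_{n\geq 0}(-p')^n$ and check that this formal expression is a well-defined element of $K$: an iteration of Neumann's lemma, crucially using that $\supp(p')$ lies in the $\leq'$-positive cone (so that the subsemigroup it generates is bounded below by $1_G$ in $\leq'$), shows that $\bigcup_{n\geq 0}\supp((p')^n)$ is well-ordered in $(G,\leq')$ and that each $s\in G$ belongs to $\supp((p')^n)$ for only finitely many $n$. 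A telescoping Cauchy-product computation (using the identity $(-p')^n=(-1)^n(p')^n$, which is immediate from $(-p')(a)=-p'(a)$) then yields $p\cdot_K q = q\cdot_K p = 1_M\, x^{1_G}$, completing the verification that every nonzero element of $K$ is a unit and hence that $K$ is a skew field.
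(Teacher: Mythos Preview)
Your proposal is correct and follows essentially the same Mal'cev--Neumann strategy as the paper: verify the abelian group structure, check that the Cauchy product is well-defined and associative via the Neumann lemma, reduce the inverse problem to the case $m_p=1_G$, $p(1_G)=1_M$ by factoring off a monomial, and then construct an inverse for such $p$. The only notable difference is in this last step: the paper defines the coefficients of $q$ by a direct well-founded recursion over the subsemigroup $S$ generated by $\supp(p)$, whereas you package the same computation as the geometric series $\sum_{n\ge 0}(-p')^n$ and invoke the iterated Neumann lemma to justify convergence. These are two standard and equivalent presentations of the same construction, so the approaches coincide in substance.
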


\begin{proof}
The commutativity and associativity of $(K, +_K, \sum_{{a\in G}} 0 x^a) $ follow from those of $(H\cup \{0\}, \boxplus, 0)$. So we only need to show the associativity of $(K, \cdot_K, 1 x^{1_G}) $, the existence of a multiplicative inverse for every element and the distributivity.

An important principle which we will need again and again as we go along is a kind of distributivity of the composition of $H$ over the various additions $\boxplus_g$. To express it cleanly, we begin by extending $\cdot_H$ to $H \cup \{0\}$ by setting $x \cdot 0 = 0 \cdot x = 0$ for all $x \in H \cup \{0\}$. Suppose that we have elements $x$ and $y_1, y_2 \ldots y_n$ of $H$ with $\psi(y_i) = u\in G$ for all $i$, so that $\boxplus_{i = 1}^n y_i$ is defined. Let $v\in G$ be such that $v = \psi(x)$. Then $z \mapsto x \cdot_H z$ is a bijection from $A_u$ to $A_{v \cdot u}$ whose composition with $\lambda_{y_1}$ is $\lambda_{x \cdot_H y_1}$, so it must also be an isomorphism of hypergroups. Thus
$$x \cdot_H \big(\underset{1\leq i \leq n}{\boxplus_u} y_i\big) =  \underset{1\leq i \leq n}{\boxplus_{v \cdot u}} x \cdot_H y_i \,.$$
A similar argument using the $\lambda'_h$ defined in the proof of Lemma~\ref{hyperfield.construct} shows
$$\big(\underset{1\leq i \leq n}{\boxplus_u} y_i\big) \cdot_H x = \underset{1\leq i \leq n}{\boxplus_{u \cdot v}} y_i \cdot_H x\,.$$

To show the associativity of $(K, \cdot_K, 1 x^{1_G})$, we let $p, q, w\in K$. Then for $s\in G$,
\begin{align*}
\big( (p\cdot_K q) \cdot_K w \big) (s) & =  \underset{g\cdot_G c = s}{\underset{c\in \supp(w)}{\boxplus_s}} \Big( \big( \underset{a \cdot_G b = g}{\underset{b\in \supp(q)}{\underset{a\in \supp(p)}{\boxplus_g}}} p(a)\cdot_H q(b) \big) \cdot_H w(c) \Big) \\
& = \underset{g\cdot_G c = s}{\underset{c\in \supp(w)}{\boxplus_s}} \Big( \underset{a \cdot_G b = g}{\underset{b\in \supp(q)}{\underset{a\in \supp(p)}{\boxplus_s}}} p(a)\cdot_H q(b)\cdot_H w(c) \Big)\\
& = \underset{a \cdot_G b \cdot_G c = s}{\underset{c\in \supp(w)}{{\underset{b\in \supp(q)}{\underset{a\in \supp(p)}{\boxplus_s}}}}} p(a)\cdot_H q(b)\cdot_H w(c), \\
& =  \underset{a \cdot_G h = s}{\underset{a\in \supp(p)}{\boxplus_s}} \Big( \underset{b \cdot_G c = h}{\underset{c\in \supp(w)}{\underset{b\in \supp(q)}{\boxplus_s}}} p(a) \cdot_H q(b)\cdot_H w(c) \Big) \\
& =  \underset{a \cdot_G h = s}{\underset{a\in \supp(p)}{\boxplus_s}} \Big( p(a) \cdot_H \big( \underset{b \cdot_G c = h}{\underset{c\in \supp(w)}{\underset{b\in \supp(q)}{\boxplus_h}}} q(b)\cdot_H w(c) \big)  \Big) \\
&=\big( p\cdot_K (q \cdot_K w)\big) (s) \,.
\end{align*}
So $(p\cdot_K q) \cdot_K w = p\cdot_K (q \cdot_K w)$.

Next we will show that each element of $K$ has a multiplicative inverse. We do this first for those $p\in K = k[[G]]$ such that $m_p = 1_G$ and $p(m_p) = 1$. Let $S$ be the set of finite sums of elements of $\supp(p)$. $S$ is well founded.

Define $q\in K= k[[G]]$ such that $q(1_G) := 1$, $q(s) := 0$ for $s\notin S$ and, for $s \in S$, define $q(s)$ recursively by
$$q(s) := - \Big(\underset{g\cdot_G h = s}{\underset{h\in S-\{s\}}{\underset{g\in \supp(p)-\{1_G\}}{\boxplus_s }}} p(g)\cdot_H q(h) \Big).$$
So
\begin{align*}
p\cdot_K q(1_G) & = 1, & \\
p\cdot_K q(s) & = 0 & \text{ if $s\notin S$,} \\
p\cdot_K q(s) & = \underset{g\cdot_G h = s}{\underset{h\in \supp(q)}{\underset{g\in \supp(p)}{\boxplus_s }}} p(g)\cdot_H q(h)\\
& = \Big(\underset{g\cdot_G h = s}{\underset{h\in \supp(q)-\{s\}}{\underset{g\in \supp(p)-\{1_G\}}{\boxplus_s }}} p(g)\cdot_H q(h)\Big) \boxplus_s p(1)\cdot_H q(s)& \\
& = \Big(\underset{g\cdot_G h = s}{\underset{h\in \supp(q)-\{s\}}{\underset{g\in \supp(p)-\{1_G\}}{\boxplus_s }}} p(g)\cdot_H q(h)\Big) \boxplus_s q(s) & \\
& = 0 & \text{ if $s\in S-\{1_G\}$.}
\end{align*}

So $p \cdot_K q$ is the identity. Therefore, $q$ is the multiplicative inverse of $p$.

Next we consider elements of $K$ with only a single summand, that is, those of the form $a x^g$. It is clear that each such element also has a multiplicative inverse, namely $a^{-1} x^{g^{-1}}$.

Now every element of $K$ can be expressed as a product $p_1 \cdot p_2$, with $m_{p_1} = 1_G$ and $p_1(m_{p_1}) = 1$ and such that $p_2$ has only a single summand. As seen above, each of $p_1$ and $p_2$ has a multiplicative inverse, and hence $p_1 \cdot p_2$ also has one, namely $p_2^{-1} \cdot p_1^{-1}$. 

For distributivity, we first would like to show that $p\cdot_K (q +_K w) = p\cdot_K q +_K p\cdot_K w$. For $s\in G$,
\begin{align*}
(p\cdot_K (q +_K w)) (s) & = \underset{g\cdot_G h = s}{\boxplus_s} p(g)\cdot_H (q(h)\boxplus_h w(h)) \\
& = \underset{g\cdot_G h = s}{\boxplus_s} \big(p(g)\cdot_H q(h) \boxplus_s p(g)\cdot_H w(h)\big)\\
& = \big(\underset{g\cdot_G h = s}{\boxplus_s} p(g)\cdot_H q(h)\big)\boxplus_s \big(\underset{g\cdot_G h = s}{\boxplus_s} p(g)\cdot_H w(h)\big),\\
&= (p\cdot_K q +_K p\cdot_K w)(s) \,.
\end{align*}
So $p\cdot_K (q +_K w) = p\cdot_K q +_K p\cdot_K w$. A similar calculation shows that $(p +_K q) \cdot_K w = p \cdot_K w +_K q \cdot_K w$. 

So $K=k[[G]]$ is a skew field.
\end{proof}

\section{The semirings associated to doubly distributive hyperfields} \label{app2}

In \cite{GJL17}, Lemma 6.2(2) provides a way to build a semiring out of a doubly distributive hyperfield.\footnote{In \cite[Theorem 2.5]{Row16}, Rowen extended the theory of constructing the semiring to every hyperfield.} In this section, we would like to talk about these semirings.

For any doubly distributive hyperfield $H$ we can define binary operations $\oplus$ and $\odot$ on $\mathcal{P} H$ by setting $A \oplus B := \bigcup_{a \in A, b \in B} a \boxplus b$ (this is just the extension of $\boxplus$ to subsets of $H$ from Definition \ref{hyperoperation}) and $A \odot B := \{a b \colon a \in A, b \in B\}$. Let $\langle H \rangle$ be the substructure of $(\mathcal P H, \oplus, \odot)$ generated from the singletons of elements of $H$. 
So $\langle H \rangle$ is a semiring.
We will refer $\langle H \rangle$ as the {\em associated semiring} to $H$. Using our classification, we can easily determine all such associated semirings. Surprisingly, some of the basic examples have already been intensively studied and play an important role in the foundations of tropical geometry. In each case, we find that $\langle H \rangle$ contains only few elements in addition to the singletons of elements of $H$.

We have seen that any doubly distributive hyperfield has the form $F \rtimes_{H, \psi} G$, where $F$ is the Krasner hyperfield, the sign hyperfield or a field. We divide into cases according the value of $F$.

\subsection{Supertropical semirings} 
If $F$ is the Krasner hyperfield then $\psi \colon H^{\times} \to G$ is an isomorphism, and we can take it to be the identity. Then the elements of $\langle H \rangle$ are the singletons of elements of $H$ and the sets $g^{\nu} := \{h \in G \colon h \leq g\} \cup \{0\}$. To simplify the definition of the addition we define an operation $\nu$ on $\langle H \rangle \setminus \{\{0\}\}$ by $\nu(\{g\}) = \nu(g^{\nu}) = g^{\nu}$ and we transfer the total order of $G$ to the $g^\nu$ in the obvious way. Then addition is given by $x \oplus \{0\} = x$ for any $x$ and otherwise by 

$$x \oplus y =  \begin{cases} x & \text{if $\nu(x) > \nu(y)$,} \\
y & \text{if $\nu(x) < \nu(y)$,} \\
\nu(x) & \text{if $\nu(x) = \nu(y)$.}\\
\end{cases} $$

Multiplication is given by $x \odot \{0\} = \{0\}$, by $\{g\} \odot \{h\} = \{g \cdot h\}$, by $\{g\} \odot h^\nu = (g \cdot h)^{\nu}$ and by $g^\nu \odot h^\nu = (g \cdot h)^\nu$. In the case that $G$ is the ordered group of real numbers, this is simply the supertropical semiring introduced by Izhakian in \cite{Izh}. This associated semiring has also been studied by Rowen in \cite{Row16}.
It would be reasonable to call such semirings in general {\em supertropical semirings}.

\subsection{Symmetrised $(\max, +)$-semirings}
If $F$ is the sign hyperfield then by Theorem \ref{K.S.constrain} without loss of generality it arises from a short exact sequence $$ 1 \to \mathbb{S}^{\times} \to \mathbb{S}^{\times} \times G \to G \to 1\,.$$

The elements of $\langle H \rangle$ then have the form $0 := \{0_H\}$, $\oplus g := \{(1, g)\}$, $\ominus g := \{(-1, g)\}$, or $g^{\circ} := \{(i, h) \colon i \in \mathbb{S}^{\times}, h \leq g\}\cup \{0_H\}$. There is an obvious projection map $\pi$ from $\langle H \rangle \setminus \{0\}$ to $G$. Then addition is given by  $x \oplus 0 = x$ for any $x$, by $x \oplus y = x$ if $\pi(x) > \pi(y)$, by $x \oplus g^{\circ} = g^{\circ}$ if $\pi(x) = g$, by $(\oplus g) \oplus (\oplus g) = \oplus g$, by $(\ominus g) \oplus (\ominus g) = \ominus g$ and by $(\oplus g) \oplus (\ominus g) = g^{\circ}$. Multiplication is given by $x \odot 0 = 0$ for any $x$, by $x \odot g^{\circ} = (\pi(x) \cdot g)^{\circ}$, by $(\oplus g) \odot (\oplus h) = \oplus(g \cdot h)$, by $(\ominus g) \odot (\ominus h) = \oplus(g \cdot h)$ and by $(\oplus g) \odot (\ominus h) = \ominus (g \cdot h)$. 

In the case that $G$ is the ordered group of real numbers, this is simply the symmetrised $(\max, +)$-semiring introduced by Akian et al in \cite{plus}. So it would be reasonable to call such semirings in general {\em symmetrised $(max, +)$-semirings}.

\subsection{Linearised $(\max, +)$-semirings}
If $F$ is a field, then the elements of $\langle H \rangle$ are the singletons of elements of $H$ (which are in canonical bijection with $H$) and the sets $\psi^{-1}(g \downarrow) \cup \{0\}$ (which are in canonical bijection with $G$). So $\langle H \rangle$ is isomorphic to the semiring on $H \cup G$ with $x \oplus y$ for $x, y \in H$ given by the unique element of $x \boxplus y$ if this set is a singleton and by $\psi(x)$ otherwise, with $x \oplus g$ for $x \in H$ and $g \in G$ given by $x$ if $\psi(x) \geq g$ and by $g$ otherwise, and with $g \oplus h$ for $g, h \in G$ given by $\max(g, h)$. For multiplication, $x \odot y = x \cdot y$ for $x, y \in H$ and $x \odot g = \psi(x) \cdot g$ for $x \in H$ and $y \in G$ and finally $g \odot h = g \cdot h$ for $g, h \in G$.

By analogy to the previous construction, we could refer to such semirings as {\em linearised $(\max, +)$-semirings}. So far as we know, such semirings have not yet been seriously investigated.

\bibliographystyle{alpha}
\bibliography{biblioCM}

\end{document}